\documentclass{amsart}
\usepackage{amsmath,amsthm,amssymb,mathrsfs}
\usepackage{graphicx}
\usepackage{color}

\theoremstyle{definition}
\newtheorem{Thm}{Theorem}[section]
\newtheorem{Def}[Thm]{Definition}
\newtheorem{Lem}[Thm]{Lemma}
\newtheorem{Cor}[Thm]{Corollary}

\newtheorem{Prop}[Thm]{Proposition}
\newtheorem{Rem}[Thm]{Remark}

\newtheorem{Claim}[Thm]{Claim}
\newtheorem{Not}[Thm]{Notation}

\def\N{\mathbb{N}}

\def\area{\mathrm{Area^{rel}}}
\def\areaq{\mathrm{Area}_Q^{\mathrm{rel}}}

\def\calh{\mathcal H}
\def\calk{\mathcal K}
\def\calr{\mathcal R}

\def\calq{\mathcal Q}

\def\arear{{\rm Area^{rel}}}

\def\scrh{\mathscr{H}}
\def\scrk{\mathscr{K}}

\makeatletter
    
    \@addtoreset{equation}{section}
  \makeatother

\title[On relative hyperbolicity for a group]
{On relative hyperbolicity for a group and relative quasiconvexity for a subgroup}
\author[Y. Matsuda]{Yoshifumi Matsuda}
\author[S. Oguni]{Shin-ichi Oguni}
\author[S. Yamagata]{Saeko Yamagata}
\address[Y. Matsuda]{Graduate School of Mathematical Sciences, University of Tokyo, 3-8-1 Komaba, Meguro-ku, Tokyo, 153-8914 Japan}
\email{ymatsuda@ms.u-tokyo.ac.jp}
\address[S. Oguni]{Department of Mathematics, Faculty of Science, Ehime University, 2-5 Bunkyo-cho, Matsuyama, Ehime, 790-8577 Japan}
\email{oguni@math.sci.ehime-u.ac.jp}
\address[S. Yamagata]{Faculty of Education and Human Sciences, Yokohama National University, 240-8501 Yokohama, Japan}
\email{yamagata@ynu.ac.jp}
\thanks{The first author is supported by the Global COE Program at Graduate School of Mathematical Sciences, the University of Tokyo, and Grant-in-Aid for Scientific Researches for Young Scientists (B) (No. 22740034), Japan Society of Promotion of Science.}
\thanks{The second author is supported by Grant-in-Aid for Scientific Researches for Young Scientists (B) (No. 24740045), Japan Society of Promotion of Science.}
\keywords{Relatively hyperbolic groups, relatively quasiconvex subgroups}
\subjclass[2010]{20F67, 20F65}

\begin{document}

\begin{abstract}
We consider two families of subgroups of a group. 
Each subgroup which belongs to one family is contained in some subgroup which belongs to the other family. 
We then discuss relations of relative hyperbolicity for the group with respect to the two families, respectively. 
If the group is supposed to be hyperbolic relative to the two families, respectively, then we consider relations of relative quasiconvexity for a subgroup of the group with respect to the two families, respectively. 
\end{abstract}


\maketitle

\section{Introduction}
Relative hyperbolicity for groups is a generalization of hyperbolicity for groups (see \cite{Gro87}). 
There exist several definitions of relative hyperbolicity for groups which are mutually equivalent for finitely generated groups (\cite{Bow12}, \cite{D-S05}, \cite{Far98}, \cite{Hru10} and \cite{Osi06}). 
Relative quasiconvexity for subgroups is defined and studied by many authors (see for example \cite{Hru10}). 

The aim of this paper is to extend some known results on relative hyperbolicity for groups and on relative quasiconvexity for subgroups to one of the most general cases. 
Indeed, we adopt Osin's definition \cite[Definition 2.35]{Osi06} (see Subsection \ref{def_rh}) of relative hyperbolicity for a group. 
For a definition of relative quasiconvexity for a subgroup, \cite[Definition 1.2]{M-O-Y1} (see Definition \ref{def-qc} and Remark \ref{rem-qc}) is adopted. 

Throughout this paper, we assume that groups are not necessarily countable. 
Unless otherwise stated, $\Lambda$ denotes a set which is not necessarily countable and for each $\lambda\in\Lambda$, we denote by $M_\lambda$ a set which is also not necessarily countable. 

Let $G$ be a group and let $\{H_\lambda \}=\{\, H_\lambda \mid \lambda \in \Lambda \,\}$ be a family of subgroups of $G$. 
For each $\lambda\in\Lambda$, let $\{K_{\lambda,\mu}\}_\mu=\{\, K_{\lambda, \mu} \mid \mu \in M_{\lambda}\, \}$ be a family of subgroups of $H_\lambda$. 
We set $\{K_{\lambda,\mu}\}=\{\, K_{\lambda, \mu}\mid \lambda\in\Lambda, \mu \in M_{\lambda}\,\}$. 

One of our main theorems is the following on relative hyperbolicity for a group. 

\begin{Thm}\label{blowup}
Let $G$ be a group and $\{H_\lambda \}$ a family of subgroups of $G$. 
For each $\lambda\in\Lambda$, let $\{K_{\lambda, \mu} \}_\mu$ be a family of subgroups of $H_\lambda$. 
Then the following conditions are equivalent:
\begin{enumerate}
\item[(i)] $G$ is hyperbolic relative to $\{H_\lambda \}$ and $\{K_{\lambda, \mu}\}$, respectively. 
\item[(ii)] $G$ is hyperbolic relative to $\{K_{\lambda, \mu}\}$ and the following conditions hold:
\begin{enumerate}
\item[(1)] $G$ satisfies Condition ($a$) with respect to $\{H_\lambda\}$ (see Definition \ref{*}). 
\item[(2)] $\{H_\lambda\}$ is almost malnormal in $G$ (see Subsection \ref{def_rh}). 
\item[(3)] For any $\lambda\in\Lambda$, the group $H_\lambda$ is quasiconvex relative to $\{K_{\lambda, \mu}\}$ in $G$. 
\end{enumerate}
\item[(iii)] $G$ is hyperbolic relative to $\{H_\lambda \}$ and for any $\lambda\in\Lambda$, the group $H_\lambda$ is hyperbolic relative to $\{K_{\lambda, \mu} \}_\mu$. 
There exists a finite subset $\Lambda_1$ of $\Lambda$ such that for any $\lambda\in \Lambda\setminus \Lambda_1$, the subgroup $H_\lambda$ is the free product $H_\lambda=\ast_{\mu\in M_\lambda}K_{\lambda,\mu}$. 
\end{enumerate}
When $\Lambda$ is finite, Condition (ii) (resp.\ (iii)) is replaced with the following condition (ii)' (resp.\ (iii)'):
\begin{enumerate}
\item[(ii)'] $G$ is hyperbolic relative to $\{K_{\lambda,\mu}\}$ and the following conditions hold:
\begin{enumerate}
\item[(2)] $\{H_\lambda\}$ is almost malnormal in $G$. 
\item[(3)] For any $\lambda\in\Lambda$, the group $H_\lambda$ is quasiconvex relative to $\{K_{\lambda, \mu}\}$ in $G$. 
\end{enumerate}
\item[(iii)'] $G$ is hyperbolic relative to $\{H_\lambda\}$ and for any $\lambda\in \Lambda$, the group $H_\lambda$ is hyperbolic relative to $\{K_{\lambda,\mu}\}_\mu$. 
\end{enumerate}
\end{Thm}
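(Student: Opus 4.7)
The plan is to establish the equivalence by the cycle $(\mathrm{i})\Rightarrow(\mathrm{iii})\Rightarrow(\mathrm{ii})\Rightarrow(\mathrm{i})$, reading off the finite-$\Lambda$ simplifications $(\mathrm{ii})'$ and $(\mathrm{iii})'$ along the way. The main tools I expect to use are a blowup/combination theorem that refines a relatively hyperbolic structure by expanding each peripheral subgroup according to its own peripheral structure, the standard facts that peripheral subgroups are relatively quasiconvex and that relative quasiconvexity is transitive in the appropriate sense, and a collapse theorem that merges peripheral subgroups into larger almost malnormal, relatively quasiconvex subgroups.

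For $(\mathrm{i})\Rightarrow(\mathrm{iii})$, hyperbolicity of $G$ relative to $\{H_\lambda\}$ is given. Each $H_\lambda$ is peripheral in $(G,\{H_\lambda\})$, hence relatively quasiconvex there, and combined with $G$ being hyperbolic relative to $\{K_{\lambda,\mu}\}$, a transitivity argument yields that $H_\lambda$ is hyperbolic relative to $\{K_{\lambda,\mu}\}_\mu$, the induced peripheral structure being the obvious subfamily. The free-product statement is the more subtle point: each $H_\lambda$ which fails to be $\ast_\mu K_{\lambda,\mu}$ contributes a genuinely nontrivial relation among the parabolics $K_{\lambda,\mu}$ that must already be accounted for in the finite relative presentation of $(G,\{K_{\lambda,\mu}\})$ coming from the relative presentation of $(G,\{H_\lambda\})$; hence only finitely many $\lambda$ can contribute, producing the required $\Lambda_1$.

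For $(\mathrm{iii})\Rightarrow(\mathrm{ii})$, almost malnormality of $\{H_\lambda\}$ is automatic from $(G,\{H_\lambda\})$ being relatively hyperbolic, and relative quasiconvexity of $H_\lambda$ in $(G,\{K_{\lambda,\mu}\})$ follows by transitivity applied to $H_\lambda$ peripheral in $(G,\{H_\lambda\})$ and $\{K_{\lambda,\mu}\}_\mu$ peripheral in $H_\lambda$. The central statement that $G$ is hyperbolic relative to $\{K_{\lambda,\mu}\}$ should come from a blowup combination theorem applied to $(G,\{H_\lambda\})$ using the relatively hyperbolic structures on each $H_\lambda$; when $\Lambda$ is infinite, the free-product hypothesis for cofinitely many $\lambda$ is precisely what ensures that the blowup still yields a relatively hyperbolic group, since free-product pieces contribute no additional relations and so the relative Dehn function stays finite. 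Condition (a) should then be verified by direct inspection of the refined relative presentation.

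For $(\mathrm{ii})\Rightarrow(\mathrm{i})$, I would apply a collapse theorem to $(G,\{K_{\lambda,\mu}\})$, merging each $\{K_{\lambda,\mu}\}_\mu$ into $H_\lambda$: almost malnormality of $\{H_\lambda\}$ prevents over-identification among peripherals, relative quasiconvexity provides the geometric control of each $H_\lambda$ inside $G$, and Condition (a) supplies the finiteness needed when $\Lambda$ is infinite. Together these give a relatively hyperbolic structure on $(G,\{H_\lambda\})$, and relative hyperbolicity with respect to $\{K_{\lambda,\mu}\}$ is already part of (ii). The main obstacle throughout will be the infinite-$\Lambda$ analysis: Condition (a) and the free-product cofiniteness condition are the compensations required to extend the blowup and collapse constructions beyond the finite case, and checking that they are sharp — necessary and sufficient rather than merely convenient — is where the heart of the argument lies.
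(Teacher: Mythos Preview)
Your overall strategy matches the paper's: the same three mechanisms drive the argument --- a blowup of peripherals (for passing from $\{H_\lambda\}$ to $\{K_{\lambda,\mu}\}$), a collapse (for the reverse), and the free-product cofiniteness coming from Condition~($a$) to handle infinite $\Lambda$. Two points of comparison are worth noting.

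First, the paper does not run the cycle $(\mathrm{i})\Rightarrow(\mathrm{iii})\Rightarrow(\mathrm{ii})\Rightarrow(\mathrm{i})$; it proves $(\mathrm{i})\Leftrightarrow(\mathrm{iii})$ and $(\mathrm{i})\Leftrightarrow(\mathrm{ii})$ separately. Your $(\mathrm{iii})\Rightarrow(\mathrm{ii})$ implicitly passes through $(\mathrm{i})$ anyway: the hard content is establishing hyperbolicity relative to $\{K_{\lambda,\mu}\}$, after which almost malnormality, Condition~($a$), and quasiconvexity of each $H_\lambda$ all follow from having both structures (this is the paper's Proposition~\ref{blowup-a}, which needs both hyperbolicities, not a pure ``transitivity'' principle as you suggest). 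So your ordering is cosmetic, but be aware that your quasiconvexity claim in $(\mathrm{iii})\Rightarrow(\mathrm{ii})$ cannot precede the blowup step. For the blowup itself the paper invokes the Dahmani--Guirardel--Osin machinery of hyperbolically embedded subgroups rather than a self-contained combination theorem.

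Second, your sketch of $(\mathrm{ii})\Rightarrow(\mathrm{i})$ as a single ``collapse theorem'' understates what is needed. The paper first uses Condition~($a$) to strip off the cofinite free-product part and reduce to a \emph{finite} subfamily $\Lambda_1$, and then proceeds by induction on $|\Lambda_1|$: at each step one peripheral $H_{k+1}$ replaces its subfamily $\{K_{k+1,\mu}\}_\mu$, via an Osin-style elementary-subgroup absorption (Proposition~\ref{rel_Osi_thm1.5}). Crucially, checking that $H_{k+1}$ is quasiconvex with respect to the \emph{intermediate} peripheral structure $\{H_i\}_{i\le k}\sqcup\{K_{i,\mu}\}_{i\ge k+1}$ uses Theorem~\ref{qc-updown}, the paper's other main result. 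Your proposal does not flag this dependence; it is not a gap in principle, but you should be aware that the collapse is inductive and leans on the quasiconvexity transfer theorem, not merely on almost malnormality plus quasiconvexity in the original $\{K_{\lambda,\mu}\}$-structure.
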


\begin{Rem}
If $G$ is countable, and both $\Lambda$ and $\bigsqcup_{\lambda\in \Lambda}M_\lambda$ are finite, then the equivalence between Conditions (i) and (ii)' in Theorem \ref{blowup} is proved by Yang \cite[Theorem 1.1]{Yan11} and the implication from (i) (and (ii)') to (iii)' follows from \cite[Theorem 1.1 and Corollary 3.5]{Yan11}. 
If $G$ is finitely generated, and both $\Lambda$ and $\bigsqcup_{\lambda\in \Lambda}M_\lambda$ are finite, Dru\c{t}u and Sapir show that Condition (iii)' in Theorem \ref{blowup} implies Condition (i) in Theorem \ref{blowup} (see \cite[Corollary 1.14]{D-S05}). 
\end{Rem}

If we omit the condition that $\Lambda$ is finite, then the implication from (ii)' to (i) (resp.\ from (ii)' to (iii)') in Theorem \ref{blowup} is not true. 
The following is an example. 
For each $l\in \mathbb N \cup \{0\}$, let $C_{3l+1}$ and $C_{3l+2}$ be groups isomorphic to $\mathbb Z$ and let $C_{3l}$ be a group isomorphic to $\mathbb Z/2\mathbb Z$. 
For each $m, n\in \mathbb N \cup \{0\}$, set $K_{2m}=C_{3m}\ast C_{3m+1}$, $K_{2m+1}=C_{3m+2}$ and $H_n=C_{3n}\ast C_{3n+1}\ast C_{3n+2}\ast C_{3(n+1)}$. 
We put $G=\ast_{n\in \mathbb N\cup \{0\}}K_n$. 
The group $G$ is thereby hyperbolic relative to $\{K_n\}_{n\in\mathbb N \cup \{0\}}$. 
The family $\{H_n\}_{n\in\mathbb N\cup \{0\}}$ is almost malnormal in $G$ and for any $n\in \mathbb N \cup \{0\}$, the group $H_n$ is quasiconvex relative to $\{K_n\}_{n\in\mathbb N\cup \{0\}}$ in $G$. 
However $G$ does not satisfy Condition ($a$) with respect to $\{H_n\}_{n\in\mathbb N\cup \{0\}}$ because of $H_n \cap H_{n+1}=C_{3(n+1)}\cong \mathbb Z / 2\mathbb Z$. 
It follows that $G$ is not hyperbolic relative to $\{H_n\}_{n\in \mathbb N \cup \{0\}}$. 

We give an example such that if one omits the condition that $\Lambda$ is finite, then the implication from (iii)' to (i) (resp.\ from (iii)' to (ii)') in Theorem \ref{blowup} is not true. 
For each $m\in \mathbb N \cup \{0\}$, let $C_m$ be a group isomorphic to $\mathbb Z /2\mathbb Z$ and set $K_{2m}=\mathbb Z \times C_m$ and $K_{2m+1}=\mathbb Z \times C_m$. 
We put $H_m=K_{2m}\ast_{C_m}K_{2m+1}$ and $G=\ast_{m\in \mathbb N \cup \{0\}}H_m$. 
It is clear that $G$ is hyperbolic relative to $\{H_m\}_{m\in \mathbb N\cup \{0\}}$ and for any $m\in \mathbb N\cup \{0\}$, the group $H_m$ is hyperbolic relative to $\{K_{2m}, K_{2m+1}\}$. 
However $G$ does not satisfy Condition ($a$) with respect to $\{K_n\}_{n\in \mathbb N\cup \{0\}}$. 
The group $G$ is hence not hyperbolic relative to $\{K_n\}_{n\in \mathbb N\cup \{0\}}$. 

The following is another our main theorem on relative quasiconvexity for a subgroup. 

\begin{Thm}\label{qc-updown}
Suppose that $G$ is hyperbolic relative to $\{H_\lambda \}$ and also hyperbolic relative to $\{K_{\lambda, \mu}\}$. 
Then for any subgroup $L$ of $G$, the following conditions are equivalent:
\begin{enumerate}
\item[(i)] $L$ is quasiconvex relative to $\{K_{\lambda, \mu}\}$ in $G$.
\item[(ii)] $L$ is quasiconvex relative to $\{H_\lambda \}$ in $G$. 
For any $g\in G$, the family $\{gLg^{-1} \cap H_\lambda\}_{\lambda\in\Lambda}$ of subgroups of $G$ is uniformly quasiconvex relative to $\{K_{\lambda,\mu}\}$ in $G$ (see Definition \ref{urqc} and Remark \ref{rem-qc}). 
\item[(iii)] $L$ is quasiconvex relative to $\{H_\lambda \}$ in $G$. 
For any $\lambda\in\Lambda$ and $g\in G$, the group $gLg^{-1}\cap H_\lambda$ is quasiconvex relative to $\{K_{\lambda,\mu}\}_\mu$ in $H_\lambda$. 
For each $g\in G$, there exists a finite subset $\Lambda_g$ of $\Lambda$ satisfying the following:
For any $\lambda\in \Lambda \setminus \Lambda_g$, the group $gLg^{-1}\cap H_\lambda$ is decomposed into a free product $gLg^{-1}\cap H_\lambda=\ast_{\mu\in M_\lambda}(gLg^{-1}\cap K_{\lambda,\mu})$. 
\end{enumerate}
When $\Lambda$ is finite, Condition (ii) (resp.\ (iii)) is replaced with the following condition (ii)' (resp.\ (iii)'):
\begin{enumerate}
\item[(ii)'] $L$ is quasiconvex relative to $\{H_\lambda\}$ in $G$. 
For any $\lambda\in\Lambda$ and $g\in G$, the group $gLg^{-1}\cap H_\lambda$ is quasiconvex relative to $\{K_{\lambda,\mu}\}$ in $G$. 
\item[(iii)'] $L$ is quasiconvex relative to $\{H_\lambda \}$ in $G$. 
For any $\lambda\in\Lambda$ and $g\in G$, the group $gLg^{-1}\cap H_\lambda$ is quasiconvex relative to $\{K_{\lambda,\mu}\}_\mu$ in $H_\lambda$. 
\end{enumerate}
\end{Thm}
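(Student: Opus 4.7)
The plan is to prove the cycle of implications (i) $\Rightarrow$ (ii) $\Rightarrow$ (iii) $\Rightarrow$ (i), leaning on Theorem \ref{blowup} to move between the two peripheral structures on $G$, together with two standard facts about relative quasiconvexity that should already be established in the paper: first, quasiconvexity relative to a refined peripheral family implies quasiconvexity relative to the coarsened family; second, the intersection of two relatively quasiconvex subgroups is again relatively quasiconvex, with constants depending only on the input data, and conjugation preserves quasiconvexity constants.

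For (i) $\Rightarrow$ (ii), the quasiconvexity of $L$ relative to $\{H_\lambda\}$ is immediate from the coarsening fact since $K_{\lambda,\mu}\subseteq H_\lambda$. By Theorem \ref{blowup}, each $H_\lambda$ is quasiconvex relative to $\{K_{\lambda,\mu}\}$ in $G$, with constants essentially uniform in $\lambda$ since they come from the relative hyperbolicity data. Applying the intersection lemma to the conjugate $gLg^{-1}$ and $H_\lambda$ yields the uniform quasiconvexity of the family $\{gLg^{-1}\cap H_\lambda\}_\lambda$ relative to $\{K_{\lambda,\mu}\}$ required in (ii).

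For (ii) $\Rightarrow$ (iii), I would use a restriction principle: if a subgroup $N\leq H_\lambda$ is quasiconvex relative to $\{K_{\lambda,\mu}\}$ in the ambient $G$, and $H_\lambda$ is itself quasiconvex relative to $\{K_{\lambda,\mu}\}$ in $G$, then $N$ is quasiconvex relative to $\{K_{\lambda,\mu}\}_\mu$ inside $H_\lambda$. Applied to $N=gLg^{-1}\cap H_\lambda$, this provides the intersection clause of (iii). For the free product decomposition, invoke Theorem \ref{blowup} once more: outside a finite set $\Lambda_1$ we have $H_\lambda=\ast_\mu K_{\lambda,\mu}$, and a relatively quasiconvex subgroup of such a free product admits a Kurosh-type decomposition. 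The uniformity in (ii) prevents extra free factors and nontrivial conjugating elements for all but a $g$-dependent finite set $\Lambda_g\supseteq\Lambda_1$, forcing $gLg^{-1}\cap H_\lambda=\ast_\mu(gLg^{-1}\cap K_{\lambda,\mu})$ outside $\Lambda_g$.

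The main obstacle is the direction (iii) $\Rightarrow$ (i). The strategy is to produce a relative quasi-geodesic realization of $L$ in a relative Cayley graph of $(G,\{K_{\lambda,\mu}\})$ by starting from a relative geodesic for $\{H_\lambda\}$ and refining each peripheral excursion through a coset $gH_\lambda$ by a relative geodesic inside $gH_\lambda$ with respect to $\{gK_{\lambda,\mu}g^{-1}\}_\mu$. The hypothesis that $gLg^{-1}\cap H_\lambda$ is quasiconvex relative to $\{K_{\lambda,\mu}\}_\mu$ in $H_\lambda$ gives local control of these refined excursions, while the free product decomposition outside $\Lambda_g$ is exactly what lets the local constants be amalgamated into uniform global ones; without it there would be hidden free parts in each $H_\lambda$ that could degrade the constants as $\lambda$ varies, breaking the global relative quasiconvexity. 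When $\Lambda$ is finite, uniformity is automatic and the free product clause becomes vacuous, so the equivalences (i) $\Leftrightarrow$ (ii)$'$ $\Leftrightarrow$ (iii)$'$ follow from the same arguments with the quantifiers simplified.
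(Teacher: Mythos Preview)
Your overall strategy is sound and close in spirit to the paper's, but the two proofs are organized differently and yours has a real gap around Condition ($b$).

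\textbf{Different cycle.} The paper runs the implications in the order (i) $\Rightarrow$ (iii) $\Rightarrow$ (ii) $\Rightarrow$ (i), not your (i) $\Rightarrow$ (ii) $\Rightarrow$ (iii) $\Rightarrow$ (i). This is not cosmetic. The paper deliberately routes the hard direction through (ii), because the \emph{uniform} quasiconvexity packaged in (ii) is exactly what drives the geometric argument back to (i); your direct (iii) $\Rightarrow$ (i) has to reconstruct that uniformity on the fly from the free-product clause, which is doable but is the step you leave vaguest. Also, for the free-product decomposition the paper proves it inside (i) $\Rightarrow$ (iii) (Claim~\ref{claim4}) by a direct vertex-by-vertex argument along a geodesic in $\Gamma(H_\lambda,\Omega_\lambda\sqcup\calk_\lambda)$, using only that $gLg^{-1}$ is quasiconvex relative to $\{K_{\lambda,\mu}\}$ in $G$; your Kurosh-type sketch in (ii) $\Rightarrow$ (iii) (``uniformity prevents extra free factors and nontrivial conjugating elements'') is not yet a proof and would require substantial work to make rigorous in this generality. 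A minor point: what you invoke as ``Theorem \ref{blowup}'' is really Proposition~\ref{blowup-a} and Lemma~\ref{a1}; citing Theorem~\ref{blowup} itself would be circular, since its (ii) $\Rightarrow$ (i) direction uses Theorem~\ref{qc-updown}.

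\textbf{The actual gap: Condition ($b$).} In this paper's framework, relative quasiconvexity is \emph{pre}-quasiconvexity plus Condition~($b$) (Definition~\ref{def-qc}), and with uncountable $G$ and infinite $\Lambda$ the latter is genuine content, not a formality. Your proposal treats quasiconvexity purely geometrically and never verifies Condition~($b$) at any stage. The paper devotes separate arguments to it: Claim~\ref{claim2} shows $L$ satisfies Condition~($b$) with respect to $\{H_\lambda\}$ assuming (i); Claim~\ref{claim7} shows $L$ satisfies Condition~($b$) with respect to $\{K_{\lambda,\mu}\}$ assuming (ii); and the free-product clause in (iii) is obtained (Claim~\ref{claim4}) precisely by exploiting Condition~($b$) for $gLg^{-1}$ relative to $\{K_{\lambda,\mu}\}$. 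None of these follows from your ``coarsening'', ``intersection'', or ``restriction'' heuristics alone. In particular, your (iii) $\Rightarrow$ (i) sketch produces at best pre-quasiconvexity; to finish you must also show that for every $y_1,y_2\in G$ with $Ly_1\cap Ly_2=\emptyset$ the set $\{K_{\lambda,\mu}\mid y_1K_{\lambda,\mu}\cap Ly_2\ne\emptyset\}$ is finite, and this needs an argument combining Condition~($b$) for $L$ relative to $\{H_\lambda\}$, Condition~($b$) for each $gLg^{-1}\cap H_\lambda$ relative to $\{K_{\lambda,\mu}\}$, and a finiteness lemma of the type \cite[Lemma 4.22]{M-O-Y1} that the paper invokes in Claim~\ref{claim7}.
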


\begin{Rem}
If $G$ is countable, and both $\Lambda$ and $\bigsqcup_{\lambda\in\Lambda}M_\lambda$ are finite, Yang proves that Condition (i) is equivalent to Condition (ii)' (see \cite[Theorem 1.3]{Yan11}) and it is proved that Condition (ii)' is equivalent to Condition (iii)' in \cite[Proposition 5.1]{M-O-Y3}. 
\end{Rem} 

We note that (ii)' and (iii)' in Theorem \ref{qc-updown} are equivalent even if $\Lambda$ is not finite (see Proposition \ref{nest}). 
We give an example such that if we omit the condition that $\Lambda$ is finite, then the implication from (iii)' to (i) in Theorem \ref{qc-updown} is not true. 

For each $n \in \N \cup \{0\}$, let $K_n$ be an infinite cyclic group generated by $a_n$. 
For each $m \in \N \cup \{0\}$, we put $H_m=K_{2m}\ast K_{2m+1}$ and $G=\ast_{n\in \N \cup \{0\}}K_n$. 
It is clear that $G$ is hyperbolic relative to $\{H_m\}_{m\in\N \cup \{0\}}$ (resp.\ $\{K_n\}_{n\in\N \cup \{0\}}$) and for each $m\in\N \cup \{0\}$, the group $H_m$ is hyperbolic relative to $\{\, K_{2m},K_{2m+1}\,\}$. 
For each $m\in \N \cup \{0\}$, we consider an infinite cyclic subgroup $L_m$ of $H_m$ generated by $a_{2m}a_{2m+1}$. 
For each $m\in \N \cup \{0\}$, the subgroup $L_m$ is then quasiconvex relative to $\{\, K_{2m},K_{2m+1}\,\}$ in $H_m$. 
We put $L=\ast_{m\in \N \cup \{0\}}L_m$ which is a subgroup of $G$. 
The group $L$ is quasiconvex relative to $\{H_m\}_{m\in\N \cup \{0\}}$ in $G$. 
However $L$ is not quasiconvex relative to $\{K_n\}_{n\in\N \cup \{0\}}$ in $G$. 
In fact, if $L$ was quasiconvex relative to $\{K_n\}_{n\in\N \cup \{0\}}$ in $G$, then $L$ would be hyperbolic relative to $\emptyset$, that is, $L$ would be a hyperbolic group by Theorem 4.25 in \cite{M-O-Y1} because  for every $n\in \N \cup \{0\}$ and every $g\in G$, the group $L\cap gK_ng^{-1}$ is trivial. 
This contradicts the fact that all hyperbolic groups are finitely generated. 

\medskip
This paper is organized as follows. 
In Section 2, basic terminology is provided and we show Lemma \ref{minimallift} on minimal lifts of quasigeodesics which is a key lemma in this paper. 
In Section 3, several properties of relative quasiconvexity for subgroups are introduced and Theorem \ref{qc-updown} is proved. 
In Section 4, we prove Theorem \ref{blowup}. 

\section{Preliminaries}
In Subsection 2.1, we recall Osin's definition of relative hyperbolicity for groups and a definition of relative quasiconvexity for subgroups (\cite[Definition 1.2]{M-O-Y1}). 
In Subsection 2.2, a minimal lift of a path is defined and some properties are studied by using minimal lifts. 
In Subsection 2.3, we recall the definition of Condition ($a$). 

We fix some notations in this paper. 
\begin{Not}
Let $G$ be a group which is not necessarily countable.  
Let $\{H_\lambda \}$ be a family of subgroups of $G$. 
For each $\lambda\in\Lambda$, $\{K_{\lambda, \mu} \}_\mu$ denotes a family of subgroups of $H_\lambda$. 

For each $\lambda\in \Lambda$, put $\calh_\lambda=H_\lambda\setminus\{1\}$ and $\calh=\bigsqcup_{\lambda\in\Lambda} \calh_\lambda$. 
For each $\lambda\in\Lambda$, we set $\calk_\lambda=\bigsqcup_{\mu\in M_\lambda} (K_{\lambda,\mu}\setminus \{1\})$ and $\calk=\bigsqcup_{\lambda\in\Lambda} \calk_\lambda$. 
We regard $\calh_\lambda$, $\calh$, $\calk_\lambda$ and $\calk$ as sets of letters. 
\end{Not}
\subsection{Notation and terminology}\label{def_rh}

We refer to \cite{Osi06} for details. 
We say that $G$ is {\it generated by a set $X$ relative to $\{H_\lambda\}$} if $G$ is generated by $X\sqcup \calh$. 
In this case, the set $X$ is said to be a {\it relative generating set of $G$ with respect to $\{H_\lambda\}$}. 
In this paper, we assume that a relative generating set is symmetric. 
If $X$ is finite, we say that $G$ is {\it finitely generated by $X$ relative to} $\{H_\lambda\}$ and $X$ is a {\it finite relative generating set of $G$ with respect to} $\{H_\lambda\}$.  
If there exists a finite relative generating set of $G$ with respect to $\{H_\lambda\}$, we simply say that $G$ is finitely generated relative to $\{H_\lambda\}$. 

The inclusion from $H_\lambda$ into $G$ induces the homomorphism
from the free product $F=F(X) \ast (\ast_{\lambda \in \Lambda} H_\lambda)
$ onto $G$, where $F(X)$ is the free group with the basis $X$.
Let $N$ be the kernel of this homomorphism.
We take a set $\calr$ consisting of words over $X \sqcup \calh$. 
If $N$ is equal to the normal closure of a subset in $F$ whose elements are exactly represented by elements in $\calr$, then 
\begin{equation}\label{representation}
\langle \ X, H_\lambda, \lambda\in\Lambda \mid R=1, \ R\in\calr \ \rangle
\end{equation}
is called a {\it relative presentation of $G$ with respect to} $\{H_\lambda\}$. 
In this paper, $\calr$ is assumed to be symmetric and to contain all cyclic shifts of any elements of $\calr$. 
If $G$ has a relative presentation (\ref{representation}) with respect to $\{H_\lambda\}$, and both $X$ and $\calr$ are finite, then the relative presentation (\ref{representation}) with respect to $\{H_\lambda\}$ is called a {\it finite relative presentation of $G$ with respect to} $\{H_\lambda\}$. 
If there exists a finite relative presentation of $G$ with respect to $\{H_\lambda\}$, the group $G$ is said to be {\it finitely presented relative to} $\{H_\lambda\}$. 

The relative presentation (\ref{representation}) of $G$ with respect to $\{H_\lambda\}$ is said to be {\it reduced} if each $R$ of $\mathcal R$ has minimal word length among all words over $X \sqcup \mathcal{H}$ representing the same element of the group $F$. 
Unless otherwise stated, we assume that every relative presentation of a group with respect to a family of subgroups of the group is reduced. 

We assume that $G$ has the relative presentation (\ref{representation}) with respect to $\{H_\lambda\}$. 
Let $W$ be a word over $X\sqcup \calh$. 
The word length of $W$ is denoted by $\|W\|$. 
Suppose that $W$ represents the neutral element $1$ in $G$. 
The word $W$ is then denoted by $W= \prod_{i=1}^{n} f_i^{-1} R_i f_i$ in $F$, where each $i=1,2,\ldots,n$, $f_i\in F$ and $R_i\in\calr$. 
We denote by $\arear(W)$ the smallest number $n$ among all such representations of $W$ as above and call it the {\it relative area of $W$ with respect to} (\ref{representation}). 
A function $f \colon \N \rightarrow \N$ is called a {\it relative isoperimetric function} of $G$ with respect to (\ref{representation}) if $f$ satisfies the following: 
For any $n\in \mathbb N$ and any word $W$ over $X\sqcup \calh$ representing the neutral element $1$ in $G$ with $\|W\|\leq n$, the inequality $\arear(W)\leq f(n)$ holds. 
The smallest relative isoperimetric function of $G$ with respect to (\ref{representation}) is said to be the {\it relative Dehn function} of $G$ with respect to (\ref{representation}). 
We say that the relative Dehn function $\delta$ of $G$ with respect to (\ref{representation}) is {\it well-defined} if for each $n\in\N$, the value $\delta(n)$ is finite. 
The relative Dehn function is not necessarily well-defined (see \cite[Example 2.55]{Osi06} or \cite[p.100]{Osi06a}). 

We say that $G$ is {\it hyperbolic relative to $\{H_\lambda\}$} if $G$ has a finite relative presentation with respect to $\{H_\lambda\}$ and the relative Dehn function of $G$ with respect to the presentation is linear. 
This definition is independent of the choice of a finite relative presentation of $G$ with respect to $\{H_\lambda\}$ (see \cite[Theorem 2.34]{Osi06}). 

Let $\Gamma$ be a graph with the set $V$ (resp.\ $E$) of vertices (resp.\ edges). 
In this paper, a graph means an oriented graph, i.e., every edge is oriented. 
For each edge $e\in E$, we denote the origin and the terminus of $e$ by $e_-$ and $e_+$, respectively. 
The inverse edge of $e$ is denoted by $e^{-1}$. 
Note that $(e^{-1})_-=e_+$ and $(e^{-1})_+=e_-$. 

For each $i=1,2,\ldots,n$, let $e_i$ be an edge of $\Gamma$. 
Let $p=e_1e_2\cdots e_n$ be a sequence of edges satisfying that for any $i=1,2,\ldots, n-1$, the equality $(e_i)_+=(e_{i+1})_-$ holds. 
The sequence $p$ is called a {\it path} from $(e_1)_-$ to $(e_n)_+$ in $\Gamma$. 
The vertices $(e_1)_-$ and $(e_n)_+$ are denoted by $p_-$ and $p_+$, respectively. 
Assuming that the length of every edge of $\Gamma$ is equal to $1$, we regard $\Gamma$ as a metric space. 
The length of $p$ is denoted by $l(p)$. 
For a path $p=e_1e_2\cdots e_n$, we express a path $(e_n)^{-1}\cdots (e_2)^{-1}(e_1)^{-1}$ by $p^{-1}$. 
When $p_-=p_+$, the path $p$ is called a {\it cycle} in $\Gamma$. 

Let $u$ and $v$ be vertices of $\Gamma$. 
A {\it geodesic} from $u$ to $v$ in $\Gamma$ is a path from $u$ to $v$ in $\Gamma$ whose length is minimal in the set of all paths from $u$ to $v$ in $\Gamma$. 

Let $X$ be a relative generating set of $G$ with respect to $\{H_\lambda\}$. 
We define the Cayley graph $\Gamma(G,X\sqcup \calh)$ of $G$ as a graph such that the set of vertices (resp.\ edges) is equal to $G$ (resp.\ $G\times (X\sqcup \calh)$) and each edge $(g,u)\in G \times (X\sqcup \calh)$ satisfies that $(g,u)_-=g$ and $(g,u)_+=gu$. 

The {\it label} of each edge $e=(g,u)\in G\times (X\sqcup \calh)$ is denoted by $u=\phi(e)$. 
The label of the inverse edge $e^{-1}$ of $e$ is $u^{-1}=\phi(e^{-1})=\phi(e)^{-1}$. 
For a path $p=e_1e_2\cdots e_n$ in $\Gamma(G,X\sqcup\calh)$, the {\it label} of $p$ is defined as $\phi(e_1)\phi(e_2)\cdots \phi(e_n)$ and denoted by $\phi(p)$. 
For each path $p$, we denote by $\overline{\phi(p)}$ the element of $G$ represented by $\phi(p)$. 
 
We recall the definition of Condition ($b$) introduced in \cite[Definition 1.1]{M-O-Y1}. 

\begin{Def}\label{**}
Let $L$ be a subgroup of $G$. 
The subgroup $L$ is said to satisfy {\it Condition} ($b$) {\it with respect to $\{H_\lambda\}$} if for any $y_1,y_2\in G$ with $Ly_1\cap Ly_2=\emptyset$, the subset $\scrh(L,y_1,y_2)=\{H_{\lambda} \mid \lambda\in\Lambda, y_1H_{\lambda}\cap Ly_2\ne\emptyset\}$ of $\{H_\lambda\}$ is finite. 
\end{Def}

Let $Y$ be a subset of $G$. 
For any $g,h\in G$, we put
\begin{equation*}
d_Y(g,h)=\min\{\ n \mid g=hy_1y_2\cdots y_n, i=1,2,\ldots,n, y_i\in Y \text{ or } y_i^{-1}\in Y \}.
\end{equation*}
If there exist no elements $y_i \in Y$ nor $y_i^{-1}\in Y$ with $g=hy_1y_2\cdots y_n$, then we set $d_Y(g,h)=\infty$. 
For any $g,h\in G$, we also denote $d_Y(g,h)$ by $|h^{-1}g|_Y$.

For subsets $Y$, $A$ and $B$ of $G$, put
$$
d_Y(A,B)=\min\{d_Y(a,b)\ | \ a\in A, b\in B\}.
$$

Let $G$ be a group which is hyperbolic relative to $\{H_\lambda\}$. 
When $G$ is finitely generated and $\Lambda$ is finite, Osin defines quasiconvexity for subgroups of $G$ relative to $\{H_\lambda\}$ (\cite[Definition 4.9]{Osi06}). 
We recall \cite[Definition 1.2]{M-O-Y1} which is a generalization of the definition when both $G$ and $\Lambda$ are not necessarily countable. 
If $G$ is finitely generated and $\Lambda$ is finite, \cite[Definition 1.2]{M-O-Y1} is equal to \cite[Definition 4.9]{Osi06}. 

\begin{Def}
\label{def-qc}
Let $G$ be a group which is hyperbolic relative to $\{H_\lambda\}$ and let $X$ be a finite relative generating set of $G$ with respect to $\{H_\lambda\}$. 
A subgroup $L$ of $G$ is said to be {\it pre-quasiconvex relative to $\{H_\lambda\}$ in $G$ with respect to $X$} if there exists a finite subset $Y$ of $G$ satisfying the following:
Let $l$ be an element of $L$ and let $p$ be a geodesic from $1$ to $l$ in $\Gamma(G, X\sqcup \calh)$. 
For each vertex $v$ of $p$, there exists a vertex $w$ of $L$ such that $d_Y(v,w)\le 1$.

If $L$ is pre-quasiconvex relative to $\{H_\lambda\}$ in $G$ with respect to $X$ and satisfies Condition ($b$) with respect to $\{H_\lambda\}$, the group $L$ is said to be {\it quasiconvex relative to $\{H_\lambda\}$ in $G$ with respect to $X$}. 
\end{Def}

For a family of subgroups of a group $G$, we define uniform relative quasiconvexity. 

\begin{Def}\label{urqc}
Suppose that $G$ is hyperbolic relative to $\{H_\lambda\}$ and that $X$ is a finite relative generating set of $G$ with respect to $\{H_\lambda\}$. 
Let $\{L_\nu\}_{\nu\in N}$ be a family of subgroups of $G$. 
We say that $\{L_\nu\}$ is {\it uniformly quasiconvex relative to $\{H_\lambda\}$ in $G$ with respect to $X$} if it holds the following conditions:
\begin{itemize}
\item For any $\nu \in N$, the group $L_\nu$ satisfies Condition ($b$) with respect to $\{H_\lambda\}$. 
\item There exists a finite subset $Z$ of $G$ satisfying the following:
Take any $\nu\in N$ and any geodesic $p$ in $\Gamma(G,X\sqcup\calh)$ whose origin and terminus lie in $L_\nu$. 
For any vertex $u$ of $p$, there is a vertex $v$ of $L_\nu$ with $d_Z(u,v)\leq 1$.  
\end{itemize}
 \end{Def}

\begin{Rem}\label{rem-qc}
One can show that the above definitions are independent of the choice of a finite relative generating set of $G$ with respect to $\{H_\lambda\}$ by using relative hyperbolicity (see \cite[Corollary 4.21]{M-O-Y1}). 
The subgroup $L$ is thus simply said to be {\it pre-quasiconvex} (resp.\ {\it quasiconvex}) {\it relative to $\{H_\lambda\}$ in $G$} if there exists a finite relative generating set $X$ of $G$ with respect to $\{H_\lambda\}$ such that $L$ is pre-quasiconvex (resp.\ quasiconvex) relative to $\{H_\lambda\}$ in $G$ with respect to $X$. 
The family $\{L_\nu\}_{\nu\in N}$ is also simply said to be {\it uniformly quasiconvex relative to $\{H_\lambda\}$ in $G$} if there exists a finite relative generating set $X$ of $G$ with respect to $\{H_\lambda\}$ such that $\{L_\nu\}_{\nu\in N}$ is uniformly quasiconvex relative to $\{H_\lambda\}$ in $G$ with respect to $X$. 
\end{Rem}

We recall the definition of almost malnormality. 
The family $\{H_\lambda\}$ is said to be {\it almost malnormal} in $G$ if 
\begin{itemize}
\item for any distinct $\lambda_1,\lambda_2\in\Lambda$ and any $g\in G$, the group $gH_{\lambda_1}g^{-1}\cap H_{\lambda_2}$ is finite; and
\item for any $\lambda\in\Lambda$ and any $g\in G\setminus H_\lambda$, the group $gH_\lambda g^{-1}\cap H_\lambda$ is finite. 
\end{itemize}
If $G$ is hyperbolic relative to $\{H_\lambda\}$, then $\{H_\lambda\}$ is almost malnormal in $G$ by \cite[Proposition 2.36]{Osi06}.

\subsection{Minimal lifts of quasigeodesics}

Let $G$ be a group with the relative presentation (\ref{representation}) with respect to $\{H_\lambda\}$. 
For a word $W$ over $X\sqcup \mathcal{H}$ and each $\lambda\in\Lambda$, a non-trivial subword $V$ of $W$ is called an {\it $H_\lambda$-subword} of $W$ if $V$ consists of letters from $\calh_\lambda$. 
We say that an $H_\lambda$-subword of $W$ is an {\it $H_\lambda$-syllable} if it is not contained in any longer $H_\lambda$-subword of $W$. 
Let $p$ be a path in $\Gamma(G,X\sqcup \calh)$. 
For each $\lambda\in\Lambda$, we say that a subpath $q$ of $p$ is an {\it $H_\lambda$-component} of $p$ if $\phi(q)$ is an $H_\lambda$-syllable of $\phi(p)$.  
A subpath $q$ of $p$ is also said to be an {\it $\calh$-component} of $p$ if there exists an element $\lambda$ of $\Lambda$ such that $q$ is an $H_\lambda$-component of $p$. 

Let $p$ be a path in $\Gamma(G,X\sqcup \calh)$ and for any $\lambda\in\Lambda$, let $q$ and $r$ be non-trivial subpaths of $p$ labeled by words over $\calh_\lambda$. 
If there exists a path $c$ in $\Gamma(G,X\sqcup \calh)$ from a vertex of $q$ to a vertex of $r$ which is labeled by letters from $\calh_\lambda$, then $q$ and $r$ are said to be {\it connected} and $c$ is called a {\it connector}. 
We permit the case where $c$ is trivial. 
If an $H_\lambda$-component $q$ of $p$ is not connected to any other $H_\lambda$-component of $p$, then we say that $q$ is {\it isolated}. 

For each $\lambda\in \Lambda$, let $g$ be an element of $\calh_\lambda$ such that there exists an $H_\lambda$-syllable of some $R\in \calr$ which represents $g$ in $G$. 
We denote by $\Omega_\lambda$ the set of all such elements as $g \in \calh_\lambda$ and put 
\begin{equation}\label{omega}
\Omega=\bigsqcup_{\lambda \in \Lambda}\Omega_\lambda.
\end{equation}

Note that for each $\lambda\in\Lambda$, the set $\Omega_\lambda$ is symmetric because $\calr$ is symmetric. 
The set $\Omega$ is thus symmetric. 
If (\ref{representation}) is a finite relative presentation of $G$ with respect to $\{H_\lambda\}$, then $\Omega$ is a finite set. 

We obtain the following lemma by the same proof as \cite[Lemma 2.27]{Osi06}. 

\begin{Lem}\label{lem_2.27}
Let $G$ be a group having the finite relative presentation (\ref{representation}) with respect to $\{H_\lambda\}$. 
Let $p$ be a cycle in $\Gamma(G,X\sqcup\calh)$ and for each $k=1,2,\ldots,m$ and $\lambda_k \in \Lambda$, let $p_k$ be an isolated $H_{\lambda_k}$-component of $p$. 
We put $M=\max\{\ \|R\| \mid R\in\calr \ \}$. 
Then for any $k=1,2,\ldots,m$, we obtain that $\overline{\phi(p_k)}\in \langle \Omega_{\lambda_k}\rangle$ and $\sum_{k=1}^m |\overline{\phi(p_k)}|_{\Omega_{\lambda_k}}\leq M\cdot \arear(\phi(p))$. 
\end{Lem}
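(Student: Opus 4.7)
The plan is to imitate the argument of \cite[Lemma 2.27]{Osi06} via a relative van Kampen diagram. Starting from a minimal relative-area factorization
\[
\phi(p) =_F \prod_{i=1}^{n} f_i^{-1} R_i f_i, \qquad n = \arear(\phi(p)),
\]
I would construct the corresponding finite relative van Kampen diagram $\Delta$ for $p$ over the presentation (\ref{representation}). Thus $\Delta$ is a simply connected planar $2$-complex with exactly $n$ two-cells, the boundary label of the $i$-th cell being $R_i$, whose oriented edges carry labels from $X\sqcup\calh$, and whose boundary cycle (read from a fixed basepoint) spells $\phi(p)$. The subpaths $p_1,\ldots,p_m$ appear as prescribed subarcs of this boundary.

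For each $\lambda\in\Lambda$ I would put an equivalence relation on the set of $\calh_\lambda$-labeled edges of $\Delta$, declaring two such edges equivalent when they can be joined by a path in $\Delta$ all of whose edges have labels in $\calh_\lambda$. Writing $T_k$ for the equivalence class containing the edges of $p_k$, the hypothesis that $p_k$ is an isolated $H_{\lambda_k}$-component of $p$ ensures that $T_k$ meets the boundary of $\Delta$ only along $p_k$. Consequently, every $\calh_{\lambda_k}$-edge of $T_k$ other than those of $p_k$ must belong to an $H_{\lambda_k}$-syllable of some cell labeled by $R_i$.

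Exploiting the planar simply connected structure of $\Delta$, I would then read off
\[
\overline{\phi(p_k)} = \prod_{s} \overline{\phi(s)}
\]
in $H_{\lambda_k}$, where $s$ ranges over the $H_{\lambda_k}$-syllables of cells meeting $T_k$, taken in the order obtained by traversing a suitable dual path encircling $p_k$ through $T_k$. Each $\overline{\phi(s)}$ lies in $\Omega_{\lambda_k}$ by the definition (\ref{omega}), so $\overline{\phi(p_k)}\in\langle\Omega_{\lambda_k}\rangle$ and $|\overline{\phi(p_k)}|_{\Omega_{\lambda_k}}$ is bounded by the number of $H_{\lambda_k}$-syllables of cells contributing to $T_k$. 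Summing over $k=1,\ldots,m$, the isolation of each $p_k$ guarantees that distinct $T_k$'s consume pairwise disjoint collections of cell syllables, so the total is at most $\sum_{i=1}^{n}\|R_i\|\leq Mn = M\cdot\arear(\phi(p))$.

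The conceptually delicate step is the third one: making the tracing of $T_k$ combinatorially rigorous, so that $\overline{\phi(p_k)}$ is genuinely realized as an ordered product of the cell syllables meeting $T_k$, and so that syllables are never double-counted across different boundary components. This is precisely the planar/dual-graph analysis of $\Delta$ performed in \cite[Lemma 2.27]{Osi06}; once that combinatorial input is in place, the remaining bookkeeping via $\|R_i\|\leq M$ is routine.
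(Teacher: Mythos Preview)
Your proposal is correct and matches the paper's own treatment: the paper does not give an independent argument but simply states that the lemma follows ``by the same proof as \cite[Lemma 2.27]{Osi06},'' which is exactly the relative van Kampen diagram argument you outline. Your sketch of the diagram construction, the $\calh_\lambda$-edge equivalence classes, the use of isolation to confine $T_k$ to $p_k$ on the boundary, and the final syllable count against $\sum_i\|R_i\|\le Mn$ is faithful to Osin's proof.
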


Let $p$ be a path in $\Gamma(G,X\sqcup \calh)$. 
If the length of any $\calh$-component of $p$ is equal to $1$, then $p$ is said to be {\it locally minimal}. 
If any $\calh$-component of $p$ is isolated, we say that $p$ is a path {\it without backtracking}. 
Note that any geodesic in $\Gamma(G,X\sqcup \calh)$ is a locally minimal path without backtracking. 

In order to define a minimal lift of a locally minimal path without backtracking in $\Gamma(G,X\sqcup\calh)$, we show the following lemma (see also \cite[Lemma 3.2]{Yan11}), which is a generalization of \cite[Proposition 2.9]{Osi06}. 

\begin{Lem}\label{2.27'}
Suppose that $G$ has the finite relative presentation (\ref{representation}) with respect to $\{H_\lambda\}$ and that $X$ is also a finite relative generating set of $G$ with respect to $\{K_{\lambda,\mu}\}$. 
Then for any $\lambda\in\Lambda$, the group $H_\lambda$ is generated by $\Omega_\lambda\sqcup \calk_\lambda$, i.e., $H_\lambda$ is finitely generated by $\Omega_\lambda$ relative to $\{K_{\lambda,\mu}\}_\mu$. 
\end{Lem}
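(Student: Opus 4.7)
The plan is to show that, for each $\lambda\in\Lambda$, every $h\in H_\lambda$ lies in $\langle\Omega_\lambda\sqcup\calk_\lambda\rangle$; since $\Omega_\lambda$ is finite because the presentation (\ref{representation}) is, this yields the assertion. Fix $h\in H_\lambda\setminus\{1\}$. Since $X\sqcup\calk$ generates $G$ by hypothesis, I would write $h=\overline{W}$ for some word $W=y_1y_2\cdots y_n$ over $X\sqcup\calk$. Viewing each $k\in K_{\lambda',\mu}\setminus\{1\}$ as a letter in $\calh_{\lambda'}$ via the inclusion $K_{\lambda',\mu}\subseteq H_{\lambda'}$, the word $W$ labels a path $q$ from $1$ to $h$ in $\Gamma(G,X\sqcup\calh)$. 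Let $e$ be the edge from $1$ to $h$ labelled by $h\in\calh_\lambda$, and form the cycle $c=eq^{-1}$.

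The core step is to analyze the $H_\lambda$-component $c_*$ of $c$ containing $e$. Reading cyclically, $c_*$ consists of $e$ together with the maximal runs of $\calh_\lambda$-edges of $q^{-1}$ meeting $e$, one on each side. As every $\calh$-letter of $\phi(q)$ lies in $\calk$, these adjacent runs correspond to the maximal $\calk_\lambda$-prefix $k_1:=\overline{y_1\cdots y_s}$ and the maximal $\calk_\lambda$-suffix $k_2:=\overline{y_r\cdots y_n}$ of $W$, both in $\langle\calk_\lambda\rangle$. A direct computation in the cycle then gives $\overline{\phi(c_*)}=k_1^{-1}\,h\,k_2^{-1}$. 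When $c_*$ is isolated in $c$, Lemma \ref{lem_2.27} yields $\overline{\phi(c_*)}\in\langle\Omega_\lambda\rangle$, so $h=k_1\,\overline{\phi(c_*)}\,k_2\in\langle\calk_\lambda\rangle\cdot\langle\Omega_\lambda\rangle\cdot\langle\calk_\lambda\rangle\subseteq\langle\Omega_\lambda\sqcup\calk_\lambda\rangle$, which is the desired conclusion.

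The main obstacle is that $c_*$ need not be isolated in $c$: there could be a connector $\gamma$ in $\Gamma(G,X\sqcup\calh)$, made of $\calh_\lambda$-edges, from $c_*$ to another $H_\lambda$-component of $c$. I would handle this by a cut-and-paste surgery: cut $c$ at the endpoints of $\gamma$ and splice in $\gamma^{\pm 1}$, producing a strictly shorter cycle in which the component containing $e$ has strictly fewer connectors. Iterating finitely many times reduces to the isolated case. The delicate step is to control how $\overline{\phi(c_*)}$ evolves under the surgery: each splice introduces $\calh_\lambda$-edges that must be absorbed into the correction factors flanking $h$, so a careful bookkeeping is needed to ensure that the final relation between $\overline{\phi(c_*)}$ and $h$ still expresses $h$ as an element of $\langle\Omega_\lambda\sqcup\calk_\lambda\rangle$ when Lemma \ref{lem_2.27} is applied to the terminal cycle.
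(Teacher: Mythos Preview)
Your setup and the isolated case are correct and match the paper's approach. The gap is in the non-isolated case. Your surgery splices in a connector $\gamma$ whose label is an arbitrary element of $H_\lambda$, and for the bookkeeping to close you then need that element to lie in $\langle\Omega_\lambda\sqcup\calk_\lambda\rangle$---but this is exactly the statement you are trying to prove, so the argument is circular as written. Moreover, the claim that the surgery yields a ``strictly shorter cycle'' with ``strictly fewer connectors'' is not justified: the splice adds an edge, and merging $c'$ into the component containing $e$ via $\gamma$ can enlarge that component and create new connections rather than eliminate them, so termination of your iteration is unclear.

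The paper avoids the iteration entirely. Instead of trying to isolate the component containing the distinguished edge $p$, it lists all the $H_\lambda$-components $q_1,\dots,q_m$ of $q$ that are connected to $p^{-1}$, writes $q=r_1q_1r_2q_2\cdots q_mr_{m+1}$, and chooses single-edge connectors $c_k$ from $(q_{k-1})_+$ to $(q_k)_-$ (with $c_1$ from $p_-$ and $c_{m+1}$ to $p_+$), so that $h=\overline{\phi(c_1)}\,\overline{\phi(q_1)}\cdots\overline{\phi(q_m)}\,\overline{\phi(c_{m+1})}$. Each $\overline{\phi(q_k)}\in\langle\calk_\lambda\rangle$ because the $\calh_\lambda$-letters of $q$ all come from $\calk_\lambda$. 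The point your proposal is missing is that each $c_k$ is an \emph{isolated} $H_\lambda$-component of the small cycle $c_kr_k^{-1}$: the endpoints of $c_k$ are endpoints of $q_{k-1}$ and $q_k$, so the adjacent edges of $r_k$ are not in $\calh_\lambda$, and any $H_\lambda$-component of $r_k$ connected to $c_k$ would lie in the same left $H_\lambda$-coset as $p_\pm$, hence would already be one of the $q_i$, contradicting $r_k\subset q\setminus\bigcup_i q_i$. Lemma~\ref{lem_2.27} then gives $\overline{\phi(c_k)}\in\langle\Omega_\lambda\rangle$ directly, with no recursion and no uncontrolled $H_\lambda$-elements entering the expression for $h$.
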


\begin{proof}
Note that for each $\lambda\in\Lambda$, the set $\Omega_\lambda$ is finite. 

We define a map $\pi$ from $\Gamma(G, X\sqcup{\calk})$ to $\Gamma(G,X\sqcup{\calh})$ as follows:
$\pi$ is the identity on the set $G$ of vertices and on the set of edges labeled by elements of $X$. 
For any $\lambda\in \Lambda$, any $\mu \in M_\lambda$ and any edge $e$ labeled by an element of $\calk_{\lambda,\mu}$, by using of the inclusion from $K_{\lambda,\mu}$ into $H_\lambda$, $\pi(e)$ is an edge labeled by an element of $\calh_\lambda$. 

For each $\lambda\in\Lambda$, let  $h$ be an element of $H_\lambda\setminus \{1\}$.
We take a geodesic $r$ from $1$ to $h$ in $\Gamma(G, X\sqcup\calk)$ and put $q=\pi(r)$ which is a path in $\Gamma(G, X\sqcup{\calh})$. 
Since $1$ and $h$ are elements of $H_\lambda$, we have an edge $p$ in $\Gamma(G,X\sqcup\calh)$ from $1$ to $h$ with $\phi(p)=h$ (see Figure \ref{fig1}). 

If $p$ is equal to $q$, then $\phi(r)$ is an element of $\calk_\lambda$ by the definition of $\pi$.
We then obtain $\overline{\phi(p)}=\overline{\phi(r)}\in \langle\calk_\lambda\rangle$. 

If $p$ is not equal to $q$, then we consider a cycle $qp^{-1}$ in $\Gamma(G, X\sqcup{\calh})$. 
If $p^{-1}$ is an isolated $H_\lambda$-component of $qp^{-1}$, then $h=\overline{\phi(p)}\in \langle \Omega_\lambda \rangle$ by Lemma \ref{lem_2.27}. 
If $p^{-1}$ is an $H_\lambda$-component of $qp^{-1}$ but not isolated in $qp^{-1}$, then we denote by $q_1, q_2, \ldots, q_m$ all the $H_\lambda$-components of $q$ connected to $p^{-1}$ such that they are arranged on $q$ in this order. 
When $p^{-1}$ is not even an $H_\lambda$-component of $qp^{-1}$, let $q_1,q_2,\ldots ,q_m$ be all the $H_\lambda$-components of $q$ which are mutually connected, placed on $q$ in this order, and $1=(q_1)_-=p_-$ or $h=(q_m)_+=p_+$ holds. 

\begin{figure}[top]
\begin{center}
\includegraphics[height=6cm]{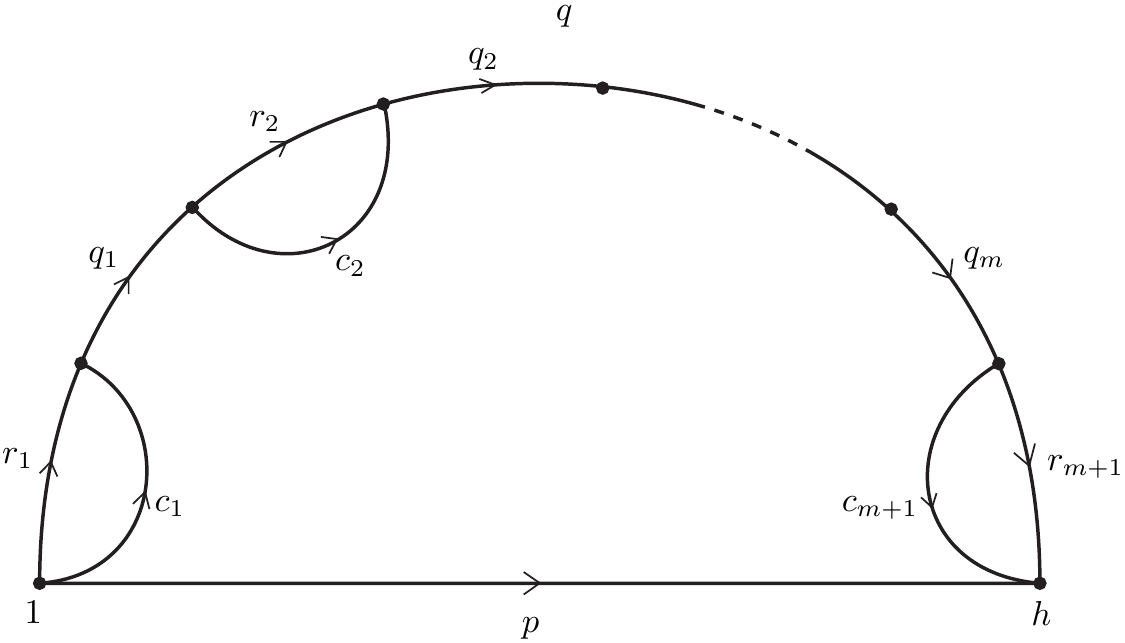}
\end{center}
\caption{A cycle $qp^{-1}$ in $\Gamma(G,X\sqcup \calh)$}
\label{fig1}
\end{figure}

For each $k=1,2,\ldots,m$, we note $\overline{\phi(q_k)}\in\langle \calk_\lambda\rangle$. 
For each $k=2,3,\ldots,m$, let $r_k$ be a subpath of $q$ from $(q_{k-1})_+$ to $(q_k)_-$. 
When $(q_1)_-\ne p_-$ (resp.\ $(q_m)_+\ne p_+$), let $r_1$ (resp.\ $r_{m+1}$) be a subpath of $q$ from $p_-$ to $(q_1)_-$ (resp.\ $(q_m)_+$ to $p_+$). 
If $(q_1)_-=p_-$ (resp.\ $(q_m)_+=p_+$), then we regard as $r_1=\emptyset$ (resp.\ $r_{m+1}=\emptyset$). 
The path $q$ in $\Gamma(G,X\sqcup \calh)$ is thereby represented by $r_1q_1r_2q_2\cdots q_mr_{m+1}$. 

In $\Gamma(G,X\sqcup\calh)$, for each $k=2,3,\ldots,m$, let $c_k$ be a connector from $(q_{k-1})_+$ to $(q_k)_-$. 
When $(q_1)_-\ne p_-$, let $c_1$ be a connector from $p_-$ to $(q_1)_-$ in $\Gamma(G,X\sqcup \calh)$. 
If $(q_1)_-=p_-$, then we regard as $c_1=\emptyset$. 
When $(q_m)_+\ne p_+$, let $c_{m+1}$ be a connector from $(q_m)_+$ to $p_+$ in $\Gamma(G,X\sqcup \calh)$. 
If $(q_m)_+=p_+$, then we regard as $c_{m+1}=\emptyset$. 
We thereby obtain cycles $c_1q_1c_2q_2\cdots q_mc_{m+1}p^{-1}$ and $c_kr_k^{-1}$ in $\Gamma(G, X\sqcup{\calh})$ for each $k=1,2,\ldots,m+1$. 
Since for each $k=1,2,\ldots,m+1$, the $H_\lambda$-component $c_k$ is isolated in $c_kr_k^{-1}$, we obtain $\overline{\phi(c_k)}\in \langle \Omega_\lambda\rangle$ by Lemma \ref{lem_2.27}. 
Because of $h=\overline{\phi(p)}=\overline{\phi(c_1q_1c_2q_2\cdots q_mc_{m+1})}=\overline{\phi(c_1)}\ \overline{\phi(q_1)}\ \overline{\phi(c_2)}\ \overline{\phi(q_2)}\cdots \overline{\phi(q_m)}\ \overline{\phi(c_{m+1})}$, we obtain $h\in \langle \Omega_\lambda\sqcup \calk_\lambda\rangle$.
\end{proof}

In the setting of Lemma \ref{2.27'}, we take a locally minimal path $p$ without backtracking in $\Gamma(G, X\sqcup {\calh})$. 
Note that $\Omega$ is a finite set and $X\sqcup\Omega$ is also a finite relative generating set of $G$ with respect to $\{K_{\lambda,\mu}\}$. 
For each $\lambda\in\Lambda$, by replacing every $H_\lambda$-component $r$ of $p$ with a geodesic in $\Gamma(H_\lambda,\Omega_\lambda\sqcup \calk_\lambda)$ from $r_-$ to $r_+$, we obtain a path $\widehat{p}$ in $\Gamma(G, X\sqcup \Omega\sqcup {\calk})$ from $p_-$ to $p_+$. 
The new path $\widehat p$ is called a {\it minimal lift} of $p$. 
We note that each vertex of $p$ is also one of the vertices of $\widehat{p}$ and the inequality $l(p)\le l(\widehat{p})$ holds. 

Take any vertices $g_1$ and $g_2$ of $\widehat{p}$ and suppose that $g_1$ and $g_2$ are arranged on $\widehat{p}$ in this order. 
We denote by $\widehat{[g_1,g_2]}$ the subpath of $\widehat{p}$ from $g_1$ to $g_2$. 

Let $A$ and $B$ be metric spaces. 
Take two constants $\alpha \geq 1$ and $\beta \geq 0$. 
A map $f \colon A \to B$ is called an $(\alpha,\beta)$-{\it quasi-isometric embedding} if for any $a_1,a_2\in A$, the inequality 
$$
\frac{1}{\,\alpha \,}d(a_1,a_2)-\beta \leq d(f(a_1),f(a_2)) \leq \alpha d(a_1,a_2)+\beta
$$
holds. 
When $A$ is an interval of $\mathbb R$, we call $f$ (and also the image of $f$) an $(\alpha,\beta)$-{\it quasigeodesic} in $B$. 
Let $f$ be an $(\alpha,\beta)$-quasi-isometric embedding from $A$ into $B$. 
We call $f$ an $(\alpha,\beta)$-{\it quasi-isometry} if there exists a constant $k\geq 0$ satisfying that for each $b\in B$, there exists $a\in A$ with $d(f(a),b)\leq k$. 
A map $f \colon A \to B$ is simply called a {\it quasi-isometric embedding} (resp.\ {\it quasi-isometry}) if there exist constants $\alpha\geq 1$ and $\beta\geq 0$ such that the map $f$ is an $(\alpha,\beta)$-quasi-isometric embedding (resp.\ $(\alpha,\beta)$-quasi-isometry). 

The following is a key lemma in this paper. 
This is a generalization of \cite[Lemma 4.4]{MP08} (see another generalization \cite[Proposition 3.9]{Yan11}). 
 
\begin{Lem}\label{minimallift}
Suppose that $G$ has the finite relative presentation (\ref{representation}) with respect to $\{H_\lambda\}$ and $G$ is hyperbolic relative to $\{H_\lambda\}$. 
Suppose that $X$ is also a finite relative generating set of $G$ with respect to $\{K_{\lambda,\mu}\}$. 
Then for any $\alpha\ge 1$ and $\beta\ge 0$, there exist constants $C\ge 1$ and $D\ge 0$ satisfying the following: 
For any locally minimal $(\alpha,\beta)$-quasigeodesic $p$ without backtracking in $\Gamma(G, X\sqcup {\calh})$, a minimal lift $\widehat{p}$ of $p$ is also a locally minimal $(C,D)$-quasigeodesic without backtracking in $\Gamma(G, X\sqcup \Omega\sqcup{\calk})$. 
\end{Lem}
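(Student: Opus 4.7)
The plan is to verify three properties of $\widehat{p}$ in order---local minimality, no backtracking, and the $(C,D)$-quasigeodesic estimate---with the last being the substantive step. The first two are essentially automatic. Each $H_\lambda$-component $e$ of $p$ is replaced inside $\widehat{p}$ by a geodesic $\widehat{e}$ of $\Gamma(H_\lambda,\Omega_\lambda\sqcup\calk_\lambda)$, which is locally minimal and has no backtracking, so every $K_{\lambda,\mu}$-component of $\widehat{p}$ lying inside a single block $\widehat{e}$ is a single edge and is not connected to another $K_{\lambda,\mu}$-component of that same block. A $K_{\lambda,\mu}$-component of $\widehat{p}$ lying in a different lifted block $\widehat{e}'$ would have its label in $K_{\lambda,\mu}\subset\calh_\lambda$, and connectedness would force the original $H_\lambda$-components $e$ and $e'$ (necessarily for the same $\lambda$) to be connected in $p$, contradicting the no-backtracking hypothesis.

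For the quasigeodesic bound I first treat the case where $u$ and $v$ are vertices of $p$. Set $l'=l(p_{uv})$, $l=l(\widehat{[u,v]})$ and $d=d_{X\sqcup\Omega\sqcup\calk}(u,v)$. Since $\Omega\sqcup\calk\subset\calh$, every edge of $\Gamma(G,X\sqcup\Omega\sqcup\calk)$ is also an edge of $\Gamma(G,X\sqcup\calh)$, so $d_{X\sqcup\calh}(u,v)\le d$, and the $(\alpha,\beta)$-quasigeodesic property of $p$ gives $l'\le\alpha d+\alpha\beta$. Choose a geodesic $\sigma$ in $\Gamma(G,X\sqcup\Omega\sqcup\calk)$ from $u$ to $v$, project it edge-by-edge to a path $\sigma'$ of length $d$ in $\Gamma(G,X\sqcup\calh)$, and form the cycle $c=\sigma'\cdot p_{uv}^{-1}$ of length $d+l'$. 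Linearity of the relative Dehn function with respect to $\{H_\lambda\}$ yields $\arear(c)\le L(d+l')$. Assuming the $H_\lambda$-components $e_1,\dots,e_k$ of $p_{uv}^{-1}$ are isolated in $c$, Lemma \ref{lem_2.27} gives $\sum_i|\overline{\phi(e_i)}|_{\Omega_{\lambda_i}}\le ML(d+l')$, and since each lifted block has length $|\overline{\phi(e_i)}|_{\Omega_{\lambda_i}\sqcup\calk_{\lambda_i}}\le|\overline{\phi(e_i)}|_{\Omega_{\lambda_i}}$, summing with the $X$-edges of $p_{uv}$ produces $l\le l'+ML(d+l')\le Cd+D$ with constants depending only on $\alpha,\beta,M,L$.

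The main obstacle is twofold: justifying that the $e_i$ may be taken isolated in $c$, and handling the case where $u$ or $v$ is interior to a lifted block. Non-isolation of two components $e_i,e_j$ is ruled out by the no-backtracking of $p$, so the only bad case is an $e_i$ connected to the $H_{\lambda_i}$-component of $\sigma'$ arising from a maximal $\Omega_{\lambda_i}\sqcup\calk_{\lambda_i}$-run of $\sigma$; I would cure this by rerouting $\sigma'$ at that run through the endpoints of $e_i$ via $\calh_{\lambda_i}$-connectors, which isolates $e_i$ and inflates the length and area of $c$ by only a bounded amount per offending $e_i$. For $u,v$ interior to lifted blocks, I would push $u,v$ out to the nearest block-boundary vertices $u_-,v_+$, apply the above estimate to $\widehat{[u_-,v_+]}$, and bound the tail contributions inside the single blocks by running the same cycle argument on auxiliary cycles formed inside those blocks, using that by Lemma \ref{2.27'} the group $H_\lambda$ is finitely generated by $\Omega_\lambda$ relative to $\{K_{\lambda,\mu}\}_\mu$. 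This bookkeeping is where the technical effort concentrates, but it introduces no conceptually new ingredient beyond the combination of the linear relative Dehn function with Lemma \ref{lem_2.27}.
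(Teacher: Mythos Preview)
Your overall architecture matches the paper's proof: form a cycle out of a projected $(X\sqcup\Omega\sqcup\calk)$-geodesic and the relevant piece of $p$, apply the linear relative Dehn function together with Lemma~\ref{lem_2.27}, and handle the non-isolated components by connectors. Your description of the connector step is a bit garbled---you speak of ``rerouting $\sigma'$'' when what is actually needed is to reroute the lift of $p_{uv}$ through $\sigma$ via connectors, exactly as the paper does in building the auxiliary path $\widetilde{[g_1,g_2]}$---but the idea is right and the bookkeeping can be carried out.

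The genuine gap is your treatment of vertices $u,v$ of $\widehat{p}$ that are interior to lifted blocks. Pushing out to block-boundary vertices $u_-,v_+$ and bounding tails does not work: the tail lengths $l(\widehat{[u_-,u]})$ and $l(\widehat{[v,v_+]})$ need not be controlled by $d_{X\sqcup\Omega\sqcup\calk}(u,v)$. Consider the extreme case where $u$ and $v$ both lie in a single lifted block $\widehat{e}$; then $u_-=e_-$, $v_+=e_+$, and what you would need is precisely a bound on $l(\widehat{[u,v]})$ in terms of $d_{X\sqcup\Omega\sqcup\calk}(u,v)$, which is the statement you are trying to prove. No auxiliary cycle ``inside the block'' helps, because the only isoperimetric information available is relative to $\{H_\lambda\}$, and a single $H_\lambda$-edge carries none.

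The paper avoids this by treating arbitrary vertices $g_1,g_2$ of $\widehat{p}$ from the outset. It builds a path $[g_1,g_2]$ in $\Gamma(G,X\sqcup\calh)$ by attaching a single $\calh_\lambda$-edge at each end (from $g_1$ to the nearest $p$-vertex $g_3$, and from the nearest $p$-vertex $g_4$ to $g_2$), and then invokes \cite[Lemma 3.5]{Osi06} to see that this $[g_1,g_2]$ is still a locally minimal $(\alpha,2\alpha+\beta+2)$-quasigeodesic without backtracking. The crucial observation is that the subpath of $\widehat{p}$ from $g_1$ to $g_2$ is itself a minimal lift of $[g_1,g_2]$ (subgeodesics of the block geodesics are geodesics in $\Gamma(H_\lambda,\Omega_\lambda\sqcup\calk_\lambda)$), so your cycle argument then applies verbatim with $[g_1,g_2]$ in place of $p_{uv}$. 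Once the estimate holds for all vertices of $\widehat{p}$, passing to arbitrary points costs only an additive $2C$, exactly as in the paper's final paragraph.
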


\begin{proof}
Let $\delta$ be the relative Dehn function with respect to (\ref{representation}) and let $B$ be a constant with $\delta(n) \leq Bn$ for any $n\in\N$. 
We put $M=\max\{\,\|R\| \mid R\in\calr\,\}$. 

Let $g$ be an arbitrary element of $G$. 
We denote by $p$ a locally minimal ($\alpha,\beta$)-quasigeodesic from $1$ to $g$ without backtracking in $\Gamma(G,X\sqcup \calh)$. 
Let $\widehat{p}$ be a minimal lift of $p$ in $\Gamma(G,X\sqcup \Omega\sqcup \calk)$. 
We put $\widehat{p}=e_1 e_2 \cdots e_b$, where for each $k=1,2,\ldots, b$, $e_k$ is an edge of $\Gamma(G,X\sqcup \Omega \sqcup \calk)$. 
Set $v_1=1=(e_1)_-, v_2=(e_2)_-, \ldots, v_b=(e_b)_-, v_{b+1}=(e_b)_+=g$. 
We note that the set of vertices of $p$ is contained in that of $\widehat p$ and $\widehat p$ is a locally minimal path without backtracking. 
Put $\gamma=2\alpha+\beta+2$, $C=MB(\alpha+3)+\alpha+1$ and $D=2C+\gamma(MB+1)$.  

Let us take arbitrary $i, j =1,2,\ldots,b+1$ with $i < j$ and put $v_i=g_1$ and $v_j=g_2$. 

\begin{Claim}
The inequality $l(\widehat{[g_1,g_2]})\leq C d_{X\sqcup \Omega\sqcup \calk}(g_1,g_2)+\gamma(MB+1)$ holds. 
\end{Claim}

\begin{proof}
If for some $\lambda\in \Lambda$, both $g_1$ and $g_2$ lie in a minimal lift of an $H_\lambda$-component of $p$, then $[g_1,g_2]$ denotes an edge from $g_1$ to $g_2$ in $\Gamma(G,X\sqcup \calh)$ labeled by an element of $\calh_\lambda$. 
Otherwise, $[g_1,g_2]$ denotes a path from $g_1$ to $g_2$ in $\Gamma(G,X\sqcup \calh)$ as follows:
Let $k_1$ (resp.\ $k_2$) be the minimal (resp.\ maximal) number of $1,2,\ldots,b+1$ with $v_{k_1}\in p\cap\widehat{[g_1,g_2]}$ (resp.\ $v_{k_2}\in p\cap\widehat{[g_1,g_2]}$). 
We put $v_{k_1}=g_3$ and $v_{k_2}=g_4$. 
When $g_1$ (resp.\ $g_2$) is a vertex of $p$, the equality $g_1=g_3$ (resp.\ $g_2=g_4$) holds.  
We denote by $[g_1,g_3]$ (resp.\ $[g_4,g_2]$) an edge from $g_1$ to $g_3$ (resp.\ from $g_4$ to $g_2$) in $\Gamma(G,X\sqcup \calh)$ labeled by an element of $\calh_\lambda$ because for some $\lambda\in\Lambda$, both $g_1$ and $g_3$ (reap.\ $g_4$ and $g_2$) lie in a minimal lift of an $H_\lambda$-component of $p$. 
If $g_1=g_3$ (resp.\ $g_2=g_4$), then we regard as $[g_1,g_3]=\emptyset$ (resp.\ $[g_4,g_2]=\emptyset$). 
The subpath of $p$ from $g_3$  to $g_4$ is denoted by $[g_3,g_4]$. 
If $k_1=k_2$, then we also regard as $[g_3,g_4]=\emptyset$. 
Put $[g_1,g_2]=[g_1,g_3][g_3,g_4][g_4,g_2]$.  
This is a path in $\Gamma(G,X\sqcup \calh)$. 

Whether both $g_1$ and $g_2$ lies in a minimal lift of an $\calh$-component of $p$ or not, the path $[g_1,g_2]$ is a locally minimal ($\alpha,\gamma$)-quasigeodesic without backtracking in $\Gamma(G,X\sqcup \calh)$ by \cite[Lemma 3.5]{Osi06} and $\widehat{[g_1,g_2]}$ is a minimal lift of $[g_1,g_2]$. 

Let $r'$ be a geodesic from $g_1$ to $g_2$ in $\Gamma(G,X\sqcup \Omega\sqcup\calk)$. 
Since for each $\lambda\in\Lambda$ and $\mu\in M_\lambda$, the group $H_{\lambda}$ contains $K_{\lambda,\mu}$ as a subgroup, the following inequality holds; 
\begin{equation*}
\begin{split}
l([g_1,g_2])&\leq \alpha  d_{X\sqcup \calh}(g_1,g_2)+\gamma\\ 
&\leq \alpha d_{X\sqcup \Omega\sqcup\calk}(g_1,g_2)+\gamma=\alpha l(r')+\gamma.
\end{split}
\end{equation*}

Let us regard $r'$ as a path $r$ in $\Gamma(G,X\sqcup \calh)$. 
Note that $l(r)=l(r')$. 
If $r$ is equal to $[g_1,g_2]$, then $r'$ is a minimal lift of $[g_1,g_2]$. 
We then have
\begin{equation*}
l(\widehat{[g_1,g_2]})
=l(r')=d_{X\sqcup \Omega\sqcup\calk}(g_1,g_2).
\end{equation*}

If $r'=\widehat{[g_1,g_2]}$, then $l(\widehat{[g_1,g_2]})=l(r')=d_{X\sqcup\Omega\sqcup\calk}(g_1,g_2)$. 
In the following, we assume that $r\ne[g_1,g_2]$ and $r'\ne\widehat{[g_1,g_2]}$. 

We treat two cases where (i) each $\calh$-component of $[g_1,g_2]^{-1}$ is also an $\calh$-component of $r[g_1,g_2]^{-1}$ and isolated in $r[g_1,g_2]^{-1}$ and (ii) there exists an $\calh$-component of $[g_1,g_2]^{-1}$ such that it is also an $\calh$-component of $r[g_1,g_2]^{-1}$ but not isolated in $r[g_1,g_2]^{-1}$, or it is not even an $\calh$-component of $r[g_1,g_2]^{-1}$. 

(i) Let $h_1,h_2,\ldots,h_n$ be all the $\calh$-components of $[g_1,g_2]$ and for each $k=1,2,\ldots,n$, let $\widehat{h_k}$ be a minimal lift of $h_k$ in $\Gamma(G,X\sqcup\Omega\sqcup \calk)$. 
By Lemma \ref{lem_2.27}, we obtain the inequality
\begin{equation*}
\begin{split}
\sum_{k=1}^{n} l(\widehat{h_k}) &\leq \sum_{k=1}^n |\overline{\phi(h_k)}|_\Omega \leq M\cdot \arear(\phi(r[g_1,g_2]^{-1}))\\
&\leq M\cdot \delta(l(r[g_1,g_2]^{-1}))\leq MB\cdot l(r[g_1,g_2]^{-1})\\
&\leq MB\{ l([g_1,g_2]^{-1})+l(r) \} \leq MB\{ \alpha  l(r')+\gamma+l(r') \}\\
&= MB(\alpha+1)\cdot l(r')+MB\gamma. 
\end{split}
\end{equation*}

By the above inequality, the following inequality holds;
\begin{equation*}
\begin{split}
l(\widehat{[g_1,g_2]})&\leq \sum_{k=1}^n l(\widehat{h_k})+l([g_1,g_2])\\
&\leq  MB(\alpha+1)\cdot l(r')+MB\gamma+\alpha l(r')+\gamma\\
&= \{MB(\alpha+1)+\alpha \}l(r')+\gamma(MB+1)\\
&=\{MB(\alpha+1)+\alpha \}d_{X\sqcup \Omega\sqcup\calk}(g_1,g_2)+\gamma(MB+1)\\
&\leq Cd_{X\sqcup\Omega\sqcup\calk}(g_1,g_2)+\gamma(MB+1).
\end{split}
\end{equation*}

\begin{figure}[top]
\begin{center}
\includegraphics[height=5cm]{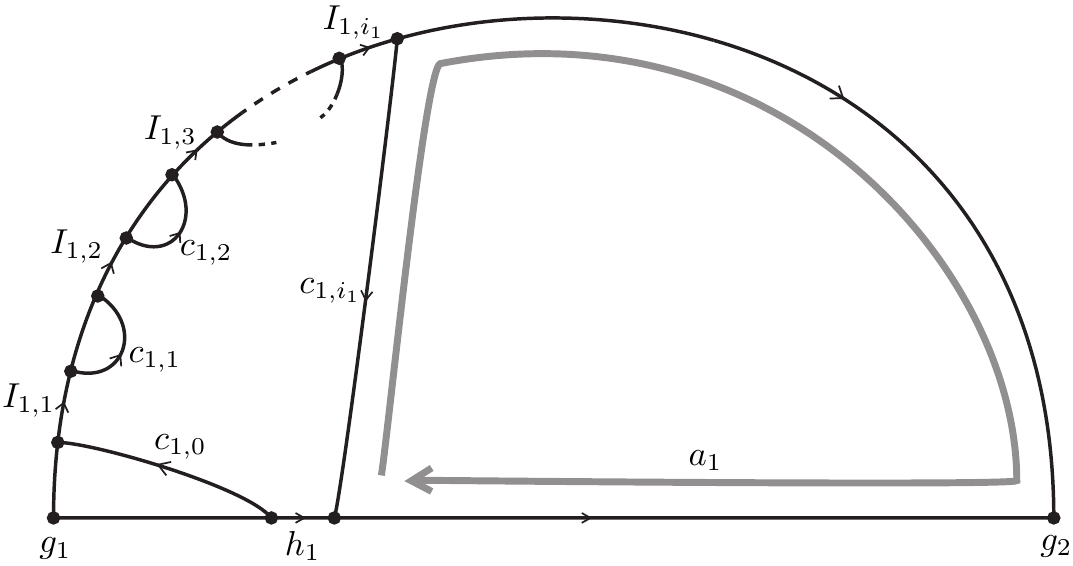}
\end{center}
\caption{Connectors and a cycle $a_1$}
\label{fig2}
\end{figure}

(ii) Let $h_1$ be one of the $\calh$-components of $[g_1,g_2]$ such that $h_1^{-1}$ is also an $\calh$-component of $r[g_1,g_2]^{-1}$ but not isolated in $r[g_1,g_2]^{-1}$, or $h_1^{-1}$ is not even an $\calh$-component of $r[g_1,g_2]^{-1}$. 
We assume that on the subpath of $[g_1,g_2]$ from $g_1$ to $(h_1)_-$, there is no such $\calh$-component of $[g_1,g_2]$. 
Since $[g_1,g_2]$ is a path without backtracking in $\Gamma(G,X\sqcup\calh)$, it does not contain any $\calh$-components which are connected to $h_1$. 
We assume that $h_1$ is an $H_{\lambda_1}$-component of $[g_1,g_2]$. 
If $h_1^{-1}$ is an $H_{\lambda_1}$-component of $r[g_1,g_2]^{-1}$ but not isolated in $r[g_1,g_2]^{-1}$, then the path $r$ in $\Gamma(G,X\sqcup\calh)$ contains all the edges of $r[g_1,g_2]^{-1}$ labeled by elements of $\calh_{\lambda_1}$ which are connected to $h_1$ in $\Gamma(G,X\sqcup \calh)$. 
We denote them by $I_{1,1}, I_{1,2},\ldots, I_{1,i_1}$ and assume that they are placed on $r$ in this order (see Figure \ref{fig2}). 
If $h_1^{-1}$ is not an $H_{\lambda_1}$-component of $r[g_1,g_2]^{-1}$, we assume that $(I_{1,1})_-=(h_1)_-=g_1$ or $(I_{1,i_1})_+=(h_1)_+=g_2$. 

Let $c_{1,0}$ be a connector from $(h_1)_-$ to $(I_{1,1})_-$. 
For each $k=1,2,\ldots, i_1-1$, let $c_{1,k}$ be a connector from $(I_{1,k})_+$ to $(I_{1,k+1})_-$ and let $c_{1,i_1}$ be a connector from $(I_{1,i_1})_+$ to $(h_1)_+$. 
For each $k=1,2,\ldots, i_1-1$, if $(I_{1,k})_+=(I_{1,k+1})_-$, then we regard as $c_{1,k}=\emptyset$. 
If $(I_{1,1})_-=(h_1)_-$ (resp.\ $(I_{1,i_1})_+=(h_1)_+$), then we regard as $c_{1,0}=\emptyset$ (resp.\ $c_{1,i_1}=\emptyset$). 

Let us take (possibly empty) subpaths $[(h_{1})_+,g_2]\subset [g_1,g_2]$ and $[(I_{1,i_1})_+,g_2]\subset r$ and set 
$$
a_1=c_{1,i_1}^{-1}[(I_{1,i_1})_+,g_2][(h_{1})_+,g_2]^{-1}.
$$ 
We assume that a cycle $a_1$ in $\Gamma(G,X\sqcup\calh)$ contains an $\calh$-component of $[(h_1)_+,g_2]^{-1}$ such that it is also an $\calh$-component of $a_1$ but not isolated in $a_1$, or it is not even an $\calh$-component of $a_1$. 
We then take an $\calh$-component $h_2$ of $[(h_1)_+,g_2]$ such that on the subpath of $[(h_1)_+,g_2]$ from $(h_1)_+$ to $(h_2)_-$, there is no such $\calh$-component of $[(h_{1})_+,g_2]$. 

If $h_2^{-1}$ is an $H_{\lambda_2}$-component of $a_1$ but not isolated in $a_1$, the subpaths $I_{2,1}, I_{2,2}$,$\ldots$, $I_{2,i_2}$ of $[(I_{1,i_1})_+,g_2]$ mean all the edges of $a_1$ labeled by elements of $\calh_{\lambda_2}$ which are connected to $h_2$. 
They are arranged on $[(I_{1,i_1})_+,g_2]$ in this order (see Figure \ref{fig3}). 
When $h_2$ is not even an $H_{\lambda_{2}}$-component of $a_1$, we assume that $(I_{2,1})_-=(h_2)_-$ or $(I_{2,i_2})_+=(h_2)_+$. 

We denote by $c_{2,0}$ a connector from $(h_2)_-$ to $(I_{2,1})_-$. 
For each $k=1,2,\ldots,i_2-1$, $c_{2,k}$ means a connector from $(I_{2,k})_+$ to $(I_{2,k+1})_-$ and $c_{2,i_2}$ denotes a connector from $(I_{2,i_2})_+$ to $(h_2)_+$. 
For each $k=1,2,\ldots,i_2-1$, if $(I_{2,k})_+=(I_{2,k+1})_-$, then we regard as $c_{2,k}=\emptyset$. 
When $(I_{2,1})_-=(h_2)_-$ (resp.\ $(I_{2,i_2})_+=(h_2)_+$) holds, we assume that $c_{2,0}=\emptyset$ (resp.\ $c_{2,i_2}=\emptyset$). 
%
\begin{figure}[top]
\begin{center}
\includegraphics[height=5cm]{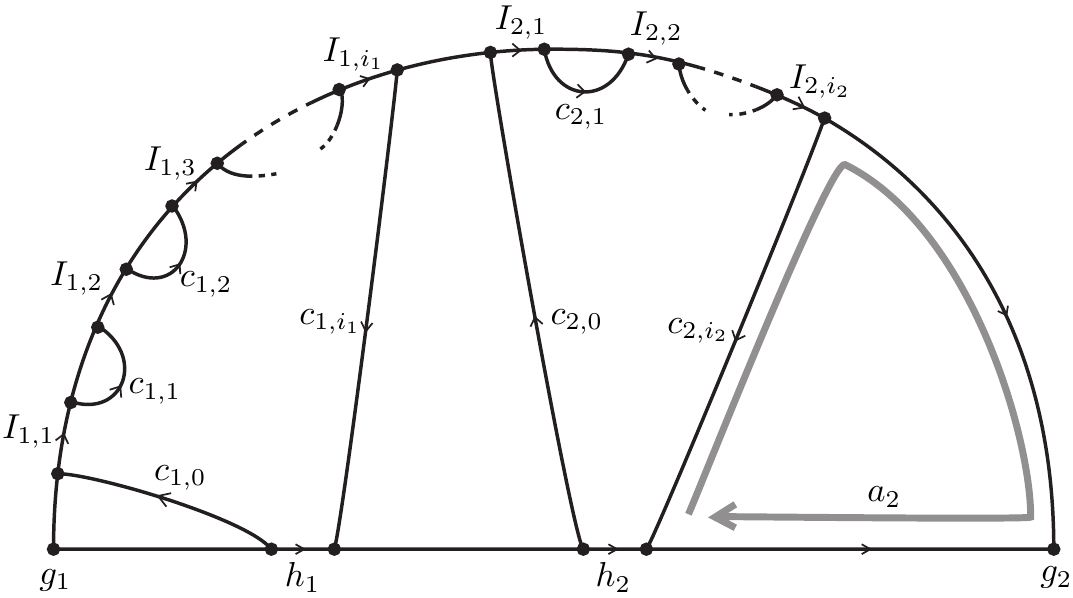}
\end{center}
\caption{A cycle $a_2$}
\label{fig3}
\end{figure}
%
Take (possibly empty) subpaths $[(h_2)_+,g_2]\subset [g_1,g_2]$ and $[(I_{2,i_2})_+,g_2]\subset r$ and set 
$$
a_2=c_{2,i_2}^{-1}[(I_{2,i_2})_+,g_2][(h_{2})_+,g_2]^{-1}. 
$$

We repeat this process $n$ times, and put $[(h_n)_+,g_2]\subset [g_1,g_2]$, $[(I_{n,i_n})_+,g_2]\subset r$ and 
$$
a_n=c_{n,i_n}^{-1}[(I_{n,i_n})_+,g_2][(h_{n})_+,g_2]^{-1}
$$
(see Figure \ref{fig4}). 
We stop the process when every $\calh$-component of $[(h_n)_+,g_2]$ is also an $\calh$-component of $a_n$ and isolated in $a_n$.

For each $k=1,2,\ldots, n$ and $j=0,1,\ldots,i_k$, we denote by $\widehat{c_{k,j}}$ a minimal lift of $c_{k,j}$. 

\begin{figure}[top]
\begin{center}
\includegraphics[height=5cm]{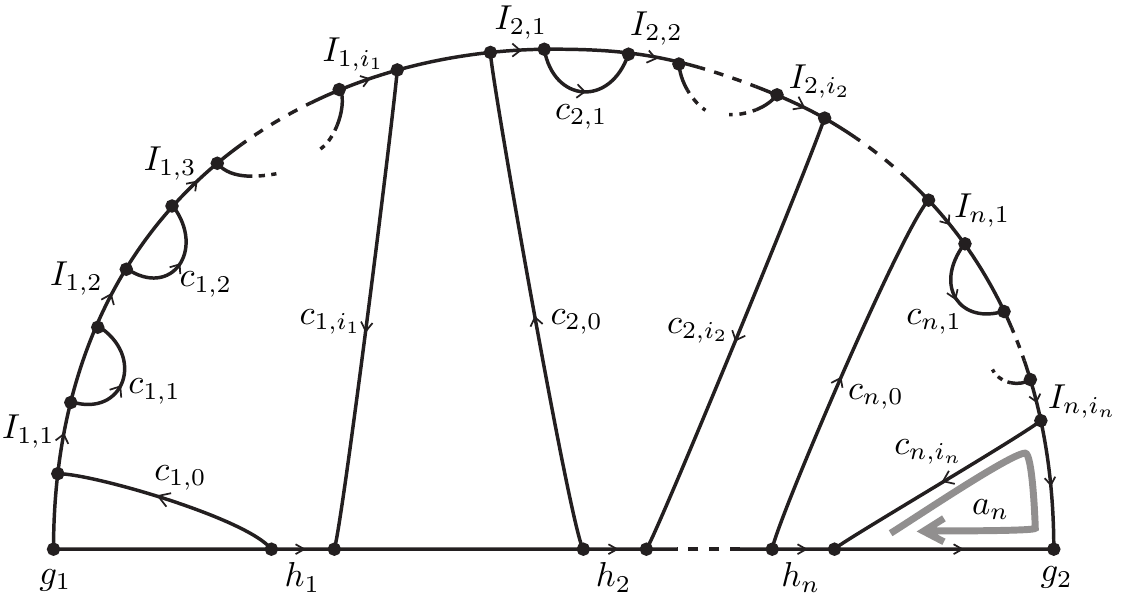}
\end{center}
\caption{A cycle $a_n$}
\label{fig4}
\end{figure}

For each $k=1,2,\ldots,n-1$, we denote the (possibly empty) subpath from $(h_k)_+$ to $(h_{k+1})_-$ of $[g_1,g_2]$ by $[(h_k)_+,(h_{k+1})_-]$. 
The (possibly empty) subpath from $g_1$ to $(h_1)_-$ (resp.\ from $(h_n)_+$ to $g_2$) of $[g_1,g_2]$ is denoted by $[g_1,(h_1)_-]$ (resp.\ $[(h_n)_+,g_2]$). 
For each $k=1,2,\ldots,n$ and $j=1,2,\ldots,i_k-1$, we express the (possibly empty) subpath from $(I_{k,j})_+$ to $(I_{k,j+1})_-$ of $r$ by $[(I_{k,j})_+,(I_{k,j+1})_-]$. 
The (possibly empty) subpath from $g_1$ to $(I_{1,1})_-$ (resp.\ from $(I_{n,i_n})_+$ to $g_2$) of $r$ is denoted by $[g_1,(I_{1,1})_-]$ (resp.\ $[(I_{n,i_n})_+,g_2]$). 
For each $k=1,2,\ldots,n-1$, $[(I_{k,i_k})_+,(I_{k+1,1})_-]$ also denotes the (possibly empty) subpath from $(I_{k,i_k})_+$ to $(I_{k+1,1})_-$ of $r$. 

We define a finite set of cycles in $\Gamma(G,X\sqcup \calh)$ as 
\begin{equation*}
\begin{split}
&\{r_k \mid k=1,2,\ldots,t\}\\
&=\{\,[g_1,(h_1)_-]c_{1,0}[g_1,(I_{1,1})_-]^{-1}\,\}\\
&\cup\{\,c_{k,j}[(I_{k,j})_+,(I_{k,j+1})_-]^{-1} \mid k=1,2,\ldots,n, j=1,2,\ldots,i_k-1\,\} \\
&\cup \{\,[(h_k)_+,(h_{k+1})_-]c_{k+1,0}[(I_{k,i_k})_+,(I_{k+1,1})_-]^{-1}c_{k,i_k}\,\mid k=1,2,\ldots,n-1\}\\
&\cup\{\,[(h_n)_+,g_2][(I_{n,i_n})_+,g_2]^{-1}c_{n,i_n}\,\}
\end{split}
\end{equation*}
(see Figure \ref{fig5}). 
For any $i=1,2,\ldots,t$, if there are $k\in\{\,1,2,\ldots,n\,\}$ and $j\in\{\,1,2,\ldots,i_k\,\}$ such that $r_i$ contains $c_{k,j}$, then $c_{k,j}$ is an isolated $\calh$-component of $r_i$. 
For any $i=1,2,\ldots,m$, denote by $J_i$ the $\calh$-component of $[g_1,g_2]$ satisfying that there is $k=1,2,\ldots,t$ such that $r_k$ contains $J_i$. 
We note that for any $i=1,2,\ldots,m$, $J_i$ is also an isolated $\calh$-component of some $r_k$. 
We note that for any $k=1,2,\ldots,n$ and any $j=1,2,\ldots,i_k$, there is $i\in \{\,1,2,\ldots,t\,\}$ such that $r_i$ contains $c_{k,j}$. 
By Lemma \ref{lem_2.27} and $\sharp\{c_{k , j}\}\leq 2 \sharp \{I_{k,j}\}\leq 2 l(r)$, we obtain the following inequality;
\begin{equation*}
\begin{split}
&\sum_{k=1}^n \sum_{j=0}^{i_k} l(\widehat{c_{k,j}})+\sum_{k=1}^m l(\widehat{J_k})\\
&\le \sum_{k=1}^n \sum_{j=0}^{i_k}|\overline{\phi(c_{k,j})}|_{\Omega}+\sum_{k=1}^m|\overline{\phi(J_k)}|_{\Omega}\\
&\le M\sum_{k=1}^t \arear(\phi(r_k))
\le M\sum_{k=1}^t \delta(l(r_k))
\le MB\sum_{k=1}^t l(r_k)\\
&\le MB \{l([g_1,g_2])+3l(r)\}\le MB\{\alpha l(r')+\gamma+3l(r')\}\\
&=MB(\alpha+3)l(r')+MB\gamma.
\end{split}
\end{equation*}

\begin{figure}[top]
\begin{center}
\includegraphics[height=5cm]{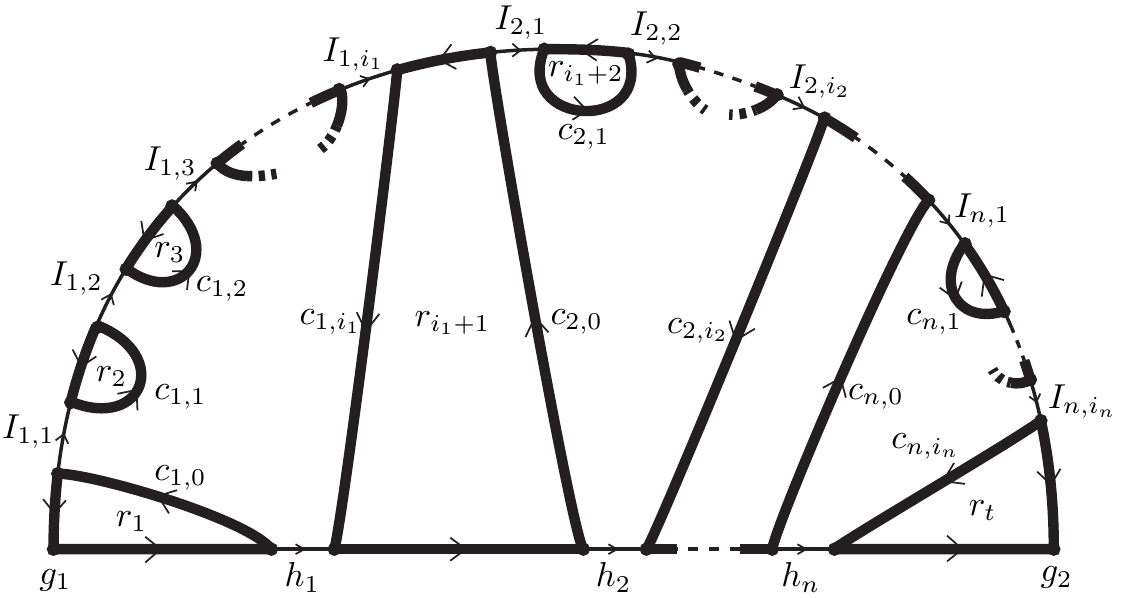}
\end{center}
\caption{Cycles $r_1, \ldots,r_t$}
\label{fig5}
\end{figure}

We denote by $I'_{k, j}$ a subpath of $r'$ which is regarded as a subpath $I_{k, j}$ of $r$.
We set $s=[g_1,g_2]\setminus \left\{\left(\bigsqcup_{k=1}^n h_k\right)\cup \left( \bigsqcup_{k=1}^m J_k \right)\right\}$ and take a path $s \cup \left(\bigsqcup_{k,j} \widehat{c_{k,j}}\right) \cup \left(\bigsqcup_{k,j} {I'_{k,j}}\right)\cup \left(\bigsqcup_{k=1}^m \widehat{J_k}\right)$ from $g_1$ to $g_2$ in $\Gamma(G,X\sqcup\Omega\sqcup\calk)$, where $\widehat{J_k}$ is a minimal lift of $J_k$ in $\Gamma(G,X\sqcup \Omega\sqcup \calk)$. 
Let us denote the path by $\widetilde{[g_1,g_2]}$ (see Figure \ref{fig6}). 
The following inequality then holds;
\begin{equation*}
\begin{split}
l(\widehat{[g_1,g_2]})&\le l(\widetilde{[g_1,g_2]})\le l([g_1,g_2])+\sum_{k=1}^n \sum_{j=0}^{i_k}l(\widehat{c_{k,j}})+l(r)+\sum_{k=1}^m l(\widehat{J_k})\\
&\le \alpha l(r')+\gamma+MB(\alpha+3)l(r')+MB\gamma+l(r') \\
&\le\{MB(\alpha+3)+\alpha+1\}l(r')+\gamma(MB+1)\\
&=\{MB(\alpha+3)+\alpha+1\}d_{X\sqcup \Omega\sqcup\calk}(g_1,g_2)+\gamma(MB+1)\\
&=Cd_{X\sqcup\Omega\sqcup{\calk}}(g_1,g_2)+\gamma(MB+1).
\end{split}
\end{equation*}
%
\begin{figure}[top]
\begin{center}
\includegraphics[height=5cm]{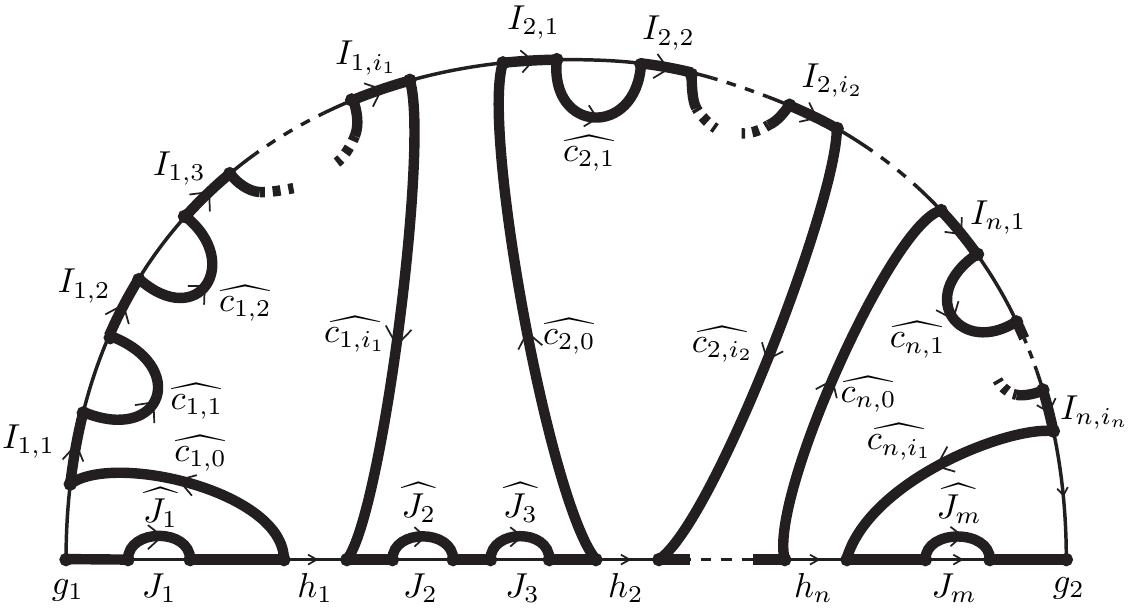}
\end{center}
\caption{A path $\widetilde{[g_1,g_2]}$}
\label{fig6}
\end{figure}
%
\end{proof}

Take arbitrary points $y_1$ and $y_2$ of $\widehat{p}$ and assume that for $i \leq j$, point $y_1$ is in $e_i$ and $y_2$ is in $e_j$. 
If $i=j$, then $y_1$ and $y_2$ are placed on $e_i$ in this order. 
Both $y_1$ and $y_2$ are not necessarily vertices of $\widehat p$. 
The subpath of $\widehat{p}$ from $y_1$ to $y_2$ is denoted by $\widehat{[y_1,y_2]}$. 
We show that $l(\widehat{[y_1,y_2]})\leq Cd_{X\sqcup \Omega \sqcup \calk}(y_1,y_2) + D$. 

Put $g_1=(e_i)_-$ and $g_2=(e_j)_+$. 
We note that $\widehat{[y_1,y_2]} \subset \widehat{[g_1,g_2]}$. 
The following inequality thereby holds;
\begin{equation*}
\begin{split}
l(\widehat{[y_1,y_2]})
&\le l(\widehat{[g_1,g_2]}) \le Cd_{X\sqcup\Omega\sqcup{\calk}}(g_1,g_2)+\gamma(MB+1)\\
&\le C\left(d_{X\sqcup\Omega\sqcup{\calk}}(g_1,y_1)+d_{X\sqcup\Omega\sqcup{\calk}}(y_1,y_2)+d_{X\sqcup\Omega\sqcup{\calk}}(y_2,g_2)\right) +\gamma(MB+1)\\
&\le C\left(d_{X\sqcup\Omega\sqcup{\calk}}(y_1,y_2)+2\right)+\gamma(MB+1)\\
&= Cd_{X\sqcup\Omega\sqcup{\calk}}(y_1,y_2)+2C+\gamma(MB+1).
\end{split}
\end{equation*}
By putting $D=2C+\gamma(MB+1)$, we obtain the assertion. 
\end{proof}

We immediately obtain the following corollary by Lemma \ref{minimallift}.

\begin{Cor}\label{embgraph}
Suppose that $G$ has the finite relative presentation (\ref{representation}) with respect to $\{H_\lambda\}$ and $G$ is hyperbolic relative to $\{H_\lambda\}$. 
We suppose that $X$ is also a finite relative generating set of $G$ with respect to $\{K_{\lambda,\mu}\}$. 
Then for any $\lambda\in\Lambda$, the Cayley graph $\Gamma(H_\lambda, \Omega_\lambda\sqcup{\calk}_\lambda)$ is quasi-isometrically embedded into $\Gamma(G, X\sqcup \Omega\sqcup{\calk})$.
\end{Cor}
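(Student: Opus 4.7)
The plan is to deduce the corollary by applying Lemma \ref{minimallift} to a single $H_\lambda$-edge in $\Gamma(G,X\sqcup\calh)$. By Lemma \ref{2.27'}, the set $\Omega_\lambda\sqcup\calk_\lambda$ generates $H_\lambda$, so the Cayley graph $\Gamma(H_\lambda,\Omega_\lambda\sqcup\calk_\lambda)$ is a well-defined metric space. The inclusion $H_\lambda\hookrightarrow G$ induces a natural map on vertex sets of these Cayley graphs, and I intend to show that this map is a quasi-isometric embedding.

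First I would handle the easy inequality. Because $\Omega_\lambda\subseteq\Omega$ and $\calk_\lambda\subseteq\calk$, every edge of $\Gamma(H_\lambda,\Omega_\lambda\sqcup\calk_\lambda)$ is also an edge of $\Gamma(G,X\sqcup\Omega\sqcup\calk)$, so for any $h_1,h_2\in H_\lambda$,
\[
d_{X\sqcup\Omega\sqcup\calk}(h_1,h_2)\;\le\; d_{\Omega_\lambda\sqcup\calk_\lambda}(h_1,h_2).
\]

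For the other direction, take $h_1,h_2\in H_\lambda$ with $h_1\ne h_2$, and let $p$ be the single edge from $h_1$ to $h_2$ in $\Gamma(G,X\sqcup\calh)$ labeled by $h_1^{-1}h_2\in\calh_\lambda$. Since $p$ consists of a single $H_\lambda$-edge, it is trivially a locally minimal $(1,0)$-quasigeodesic without backtracking. Applying Lemma \ref{minimallift} with $\alpha=1$ and $\beta=0$ yields constants $C\ge 1$ and $D\ge 0$ such that any minimal lift $\widehat p$ is a $(C,D)$-quasigeodesic in $\Gamma(G,X\sqcup\Omega\sqcup\calk)$. By the very definition of a minimal lift, $\widehat p$ is a geodesic in $\Gamma(H_\lambda,\Omega_\lambda\sqcup\calk_\lambda)$ from $h_1$ to $h_2$, so $l(\widehat p)=d_{\Omega_\lambda\sqcup\calk_\lambda}(h_1,h_2)$. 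The quasigeodesic inequality then gives
\[
\tfrac{1}{C}\,d_{\Omega_\lambda\sqcup\calk_\lambda}(h_1,h_2)-D\;\le\; d_{X\sqcup\Omega\sqcup\calk}(h_1,h_2),
\]
equivalently $d_{\Omega_\lambda\sqcup\calk_\lambda}(h_1,h_2)\le C\,d_{X\sqcup\Omega\sqcup\calk}(h_1,h_2)+CD$.

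Combining the two inequalities shows that the inclusion $H_\lambda\hookrightarrow G$ is a quasi-isometric embedding of the corresponding Cayley graphs. There is no serious obstacle here: all of the analytic work is packaged into Lemma \ref{minimallift}, and the only point that requires a moment's attention is verifying that a single $\calh_\lambda$-edge qualifies as a locally minimal $(1,0)$-quasigeodesic without backtracking, which is immediate since its unique $\calh$-component has length $1$ and is vacuously isolated.
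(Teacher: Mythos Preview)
Your argument is correct and is precisely the immediate deduction the paper has in mind: apply Lemma~\ref{minimallift} with $(\alpha,\beta)=(1,0)$ to the single $H_\lambda$-edge from $h_1$ to $h_2$, whose minimal lift is by definition a geodesic in $\Gamma(H_\lambda,\Omega_\lambda\sqcup\calk_\lambda)$. The only thing I would add is that your constants $C,D$ are independent of $\lambda$, so you in fact get a uniform quasi-isometric embedding across all $\lambda\in\Lambda$.
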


By using Corollary \ref{embgraph}, we show the following proposition. 

\begin{Prop}\label{blowup-a}
Suppose that $G$ is hyperbolic relative to $\{H_\lambda\}$ and $\{K_{\lambda,\mu}\}$, respectively. 
Then for any $\lambda\in \Lambda$, the subgroup $H_\lambda$ of $G$ is quasiconvex relative to $\{K_{\lambda,\mu}\}$ in $G$. 
Moreover, for any $\lambda\in\Lambda$, the group $H_\lambda$ is hyperbolic relative to $\{K_{\lambda,\mu}\}_\mu$. 
\end{Prop}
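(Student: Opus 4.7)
The plan is to fix a finite relative generating set $X$ of $G$ that serves for both relative presentations (by taking the union of finite relative generating sets with respect to each family), so that the finite set $\Omega$ from equation (\ref{omega}) and Lemma \ref{minimallift} become available. The finite set $X\sqcup\Omega$ is then also a finite relative generating set of $G$ with respect to $\{K_{\lambda,\mu}\}$, and the Cayley graphs $\Gamma(G,X\sqcup\calh)$ and $\Gamma(G,X\sqcup\Omega\sqcup\calk)$ can both be exploited.

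For the relative quasiconvexity of $H_\lambda$ in $G$ with respect to $\{K_{\lambda,\mu}\}$, fix $h\in H_\lambda$ and consider the single-edge path $p$ from $1$ to $h$ labeled by $h\in\calh_\lambda$ in $\Gamma(G,X\sqcup\calh)$; this is trivially a locally minimal $(1,0)$-quasigeodesic without backtracking. By Lemma \ref{minimallift} its minimal lift $\widehat p$ is a $(C,D)$-quasigeodesic in $\Gamma(G,X\sqcup\Omega\sqcup\calk)$ whose vertices all lie in $H_\lambda$. A geodesic $q$ from $1$ to $h$ in $\Gamma(G,X\sqcup\calk)$, regarded inside the larger Cayley graph, remains a quasigeodesic without backtracking (with constants controlled by the finite set $\Omega$). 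Invoking the standard fellow-traveler property for quasigeodesics sharing the same endpoints in the relatively hyperbolic Cayley graph of $G$ with respect to $\{K_{\lambda,\mu}\}$ (using $X\sqcup\Omega$ as the finite relative generating set; see \cite{Osi06}), each vertex of $q$ either lies within bounded $(X\sqcup\Omega)$-word distance of a vertex of $\widehat p$, in which case a finite ball $Y$ of the corresponding radius in $\Gamma(G,X\sqcup\Omega)$ witnesses $d_Y\leq 1$ to a vertex of $H_\lambda$, or sits on a $\calk$-component of $q$ connected to a $\calk$-component of $\widehat p$, which forces it into a coset $gK_{\lambda,\mu}$ meeting $H_\lambda$ and hence contained in $H_\lambda$. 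Pre-quasiconvexity follows, and Condition $(b)$ of Definition \ref{**} is deduced from the almost malnormality of $\{K_{\lambda,\mu}\}$ in $G$, noting that infinitely many $K_{\lambda',\mu}$ contributing elements of the fixed set $y_1^{-1}H_\lambda y_2$ would contradict malnormality through a pigeonhole argument.

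For the relative hyperbolicity of $H_\lambda$ with respect to $\{K_{\lambda,\mu}\}_\mu$, Lemma \ref{2.27'} already provides $\Omega_\lambda$ as a finite relative generating set. By Corollary \ref{embgraph}, $\Gamma(H_\lambda,\Omega_\lambda\sqcup\calk_\lambda)$ is quasi-isometrically embedded into $\Gamma(G,X\sqcup\Omega\sqcup\calk)$, which is Gromov hyperbolic since $G$ is hyperbolic relative to $\{K_{\lambda,\mu}\}$; hyperbolicity thus passes to the embedded subgraph. For the finite relative presentation with linear relative Dehn function, one takes a word $W$ over $\Omega_\lambda\sqcup\calk_\lambda$ representing $1$ in $H_\lambda$, applies $G$'s linear relative isoperimetric inequality to $W$ viewed as a word over $X\sqcup\Omega\sqcup\calk$ to obtain a decomposition of area at most $B\|W\|$, and prunes this decomposition using the quasiconvexity of $H_\lambda$ proved above so that only relations supported inside $H_\lambda$ remain.

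The main obstacle will be the pruning step in the second part: turning a decomposition of $W$ coming from $G$'s relative presentation into one valid for a presentation of $H_\lambda$, without losing the linear area bound. This requires carefully tracking how cells and $\calk$-components of the diagram meet $H_\lambda$ and using the quasiconvex estimate from the first part to control excursions leaving $H_\lambda$, while also preserving the locally minimal without backtracking hypothesis needed for any reapplication of Lemma \ref{minimallift}. A secondary technical point in the first part is converting the fellow-traveler bound (a bound in word length over $X\sqcup\Omega$) into the stronger $d_Y\leq 1$ required by the definition of pre-quasiconvexity; this hinges on the finiteness of $X\sqcup\Omega$ together with the special treatment of $\calk$-components described above.
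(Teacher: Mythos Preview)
The paper's proof is far shorter than your proposal and proceeds purely by citation: Corollary~\ref{embgraph} gives that $\Gamma(H_\lambda,\Omega_\lambda\sqcup\calk_\lambda)$ embeds quasi-isometrically in $\Gamma(G,X\sqcup\Omega\sqcup\calk)$, hence $H_\lambda$ is undistorted relative to $\{K_{\lambda,\mu}\}$ in $G$; then \cite[Theorem~1.4(i)]{M-O-Y1} converts undistortion into relative quasiconvexity (including Condition~($b$)), and \cite[Theorem~4.25]{M-O-Y1} yields that a relatively quasiconvex subgroup inherits relative hyperbolicity with respect to the induced peripheral structure. Your proposal is effectively an attempt to reprove both of these cited theorems from scratch in this special case.

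Your pre-quasiconvexity argument in Part~1 is essentially sound and is, in spirit, the direct proof of one half of \cite[Theorem~1.4(i)]{M-O-Y1}: the minimal lift $\widehat p$ has all vertices in $H_\lambda$, and fellow-travelling of geodesics with $(C,D)$-quasigeodesics in the relatively hyperbolic Cayley graph (via \cite[Theorem~2.14]{MP08}) gives the required finite set $Y$. However, your argument for Condition~($b$) is a genuine gap: almost malnormality of $\{K_{\lambda',\mu}\}$ says nothing directly about the finiteness of $\{K_{\lambda',\mu}:y_1K_{\lambda',\mu}\cap H_\lambda y_2\neq\emptyset\}$, and a ``pigeonhole argument'' does not materialize from the hypothesis as stated. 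The paper avoids this by invoking \cite[Theorem~1.4(i)]{M-O-Y1}, where Condition~($b$) is established as part of the undistorted-implies-quasiconvex package.

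In Part~2 your plan has a more serious gap. Hyperbolicity of $\Gamma(H_\lambda,\Omega_\lambda\sqcup\calk_\lambda)$ does follow from Corollary~\ref{embgraph}, but in Osin's framework relative hyperbolicity also requires a finite relative presentation with linear relative Dehn function, and your ``pruning'' step---turning a van Kampen decomposition over $G$'s relative presentation into one over a presentation of $H_\lambda$ while preserving a linear area bound---is exactly the nontrivial content you would need to supply. There is no mechanism in your sketch that forces conjugating elements or $2$-cells in $G$'s diagram to live in $H_\lambda$, and quasiconvexity of $H_\lambda$ controls only vertices on the boundary, not the interior of the diagram. The paper sidesteps this entirely by citing \cite[Theorem~4.25]{M-O-Y1}, whose proof uses different machinery; if you want a self-contained argument, you should either reproduce that proof or appeal to the equivalent characterizations of relative hyperbolicity (e.g.\ hyperbolicity of the coned-off graph together with a BCP-type condition) rather than attempt a direct diagram surgery.
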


\begin{proof}
The group $G$ is assumed to have the finite relative presentation (\ref{representation}) with respect to $\{H_\lambda\}$. 
Without loss of generality, the set $X$ is assumed to be also a finite relative generating set of $G$ with respect to $\{K_{\lambda,\mu}\}$. 
For each $\lambda\in\Lambda$ and $\mu \in M_\lambda$, we also assume that $H_\lambda$ and $K_{\lambda,\mu}$ are not trivial groups without loss of generality (refer to \cite[Remark 2.2]{M-O-Y1}). 

For any $\lambda\in \Lambda$, it follows from Corollary \ref{embgraph} that $H_\lambda$ is undistorted relative to $\{K_{\lambda,\mu}\}$ in $G$ (see \cite[Definition 4.13]{M-O-Y1}). 
The subgroup $H_\lambda$ of $G$ is hence quasiconvex relative to $\{K_{\lambda,\mu}\}$ in $G$ by \cite[Theorem 1.4 (i)]{M-O-Y1}. 
Moreover, the subgroup $H_\lambda$ is hyperbolic relative to $\{K_{\lambda,\mu}\}_\mu$ by \cite[Theorem 4.25]{M-O-Y1}. 
\end{proof}

\subsection{On Condition ($a$)}

We recall the definition of Condition ($a$) introduced in \cite[Definition 2.11]{M-O-Y1}. 

\begin{Def}\label{*}
Let $G$ be a group with a relative generating set $X$ with respect to $\{H_\lambda\}$. 
The group $G$ is said to satisfy {\it Condition} ($a$) {\it with respect to} $(X, \{H_\lambda\})$ if there exists a finite subset $\Lambda_0$ of $\Lambda$ such that whenever any locally minimal cycle without backtracking in $\Gamma(G,X\sqcup\calh)$ contains an $H_\lambda$-component, the index $\lambda$ is contained in $\Lambda_0$. 
 
If there is a relative generating set $X$ of $G$ with respect to $\{H_\lambda\}$ such that $G$ satisfies Condition ($a$) with respect to $(X,\{H_\lambda\})$, then we simply say that $G$ satisfies {\it Condition} ($a$) {\it with respect to} $\{H_\lambda\}$. 
\end{Def}

\begin{Rem}\label{rem*}
In the setting of Definition \ref{*}, when $\Lambda$ is finite, the group $G$ always satisfies Condition ($a$) with respect to $\{H_\lambda\}$. 
When $G$ has the finite relative presentation (\ref{representation}) with respect to $\{H_\lambda\}$, the group $G$ also satisfies Condition ($a$) with respect to $(X,\{H_\lambda\})$. 
\end{Rem}

\begin{Rem}
If $G$ has a finite relative generating set with respect to $\{H_\lambda\}$, and satisfies Condition ($a$) with respect to $\{H_\lambda\}$, then for any finite relative generating set $Y$ of $G$ with respect to $\{H_\lambda\}$, the group $G$ also satisfies Condition ($a$) with respect to $(Y,\{H_\lambda\})$ by \cite[Lemma 2.14]{M-O-Y1}. 
\end{Rem}

\begin{Not}\label{not*}
Let $X$ (resp.\ $Y$) be a relative generating set of $G$ with respect to $\{H_\lambda\}$ (resp.\ $\{K_{\lambda,\mu}\}$). 
When $G$ satisfies Condition ($a$) with respect to $(X,\{H_\lambda\})$ (resp.\ $(Y,\{K_{\lambda,\mu}\})$), we denote by $\Lambda_X$ (resp.\ $M_Y$) the smallest finite subset of $\Lambda$ (resp.\ $\bigsqcup_{\lambda\in\Lambda}(\{\lambda\}\times M_\lambda)$) such that whenever any locally minimal cycle without backtracking in $\Gamma(G,X\sqcup\calh)$ (resp.\ $\Gamma(G,Y\sqcup \calk)$) contains an $H_\lambda$-component (resp.\ a $K_{\lambda,\mu}$-component), the index $\lambda$ (resp.\ $(\lambda,\mu)$) is contained in $\Lambda_X$ (resp.\ $M_Y$). 
\end{Not}

We introduce Lemma \ref{a1} which is used to show that Condition (ii) implies Condition (i) in Theorem \ref{blowup}. 

\begin{Lem}\label{a1}
Let $G$ be a group having a relative generating set $X$ (resp.\ $Y$) with respect to $\{H_\lambda\}$ (resp.\ $\{K_{\lambda,\mu}\}$).  
Suppose that $G$ satisfies Condition ($a$) with respect to $(X,\{H_\lambda\})$ and $(Y,\{K_{\lambda,\mu}\})$, respectively. 
Then there exists a finite subset $\Lambda_1$ of $\Lambda$ such that $G=\langle\,X\sqcup \left(\bigsqcup_{\lambda\in\Lambda_1}\calh_\lambda\right)\,\rangle \ast\left( \ast_{\lambda\notin\Lambda_1}H_\lambda\right)$ and for any $\lambda\in\Lambda\setminus\Lambda_1$, the equality $H_\lambda=\ast_{\mu\in M_\lambda}K_{\lambda,\mu}$ holds. 
\end{Lem}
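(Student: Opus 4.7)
I would set $\Lambda_1 := \Lambda_X \cup \{\lambda \in \Lambda : (\lambda,\mu) \in M_Y \text{ for some } \mu \in M_\lambda\}$, which is finite by the hypotheses on Condition~($a$), and establish both asserted free product decompositions by a common minimal-counterexample scheme applied in two different Cayley graphs. In each case, the factors of the asserted free product sit as subgroups of the ambient group, so the natural map from the free product is already surjective; the task reduces to injectivity, namely showing that no nontrivial reduced word collapses to $1$ in the ambient group.

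For the first decomposition, set $A := \langle X \sqcup \bigsqcup_{\lambda \in \Lambda_1}\calh_\lambda\rangle$ and suppose $w = w_1 \cdots w_m$ is a reduced counterexample of minimal length $m$ in $A \ast \bigl(\ast_{\lambda \notin \Lambda_1}H_\lambda\bigr)$. A standard cyclic reduction step (merging $w_m w_1$ when they come from the same factor gives a strictly shorter relation) allows me to also assume $w_1, w_m$ lie in different factors. For $m\ge 2$ I build the cycle $c$ in $\Gamma(G, X \sqcup \calh)$ by representing each $A$-piece of $w$ by a geodesic word in $X \sqcup \bigsqcup_{\lambda \in \Lambda_1}\calh_\lambda$ and each $H_\lambda$-piece ($\lambda \notin \Lambda_1$) by the single $\calh_\lambda$-edge labeled by that element. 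Factor alternation plus disjointness of the $\Lambda_1$- and $(\Lambda\setminus\Lambda_1)$-alphabets make $c$ locally minimal. If $c$ has no backtracking, Condition~($a$) for $(X, \{H_\lambda\})$ forces every $\calh$-component to have index in $\Lambda_X \subseteq \Lambda_1$, contradicting the $\calh_\lambda$-edges (with $\lambda \notin \Lambda_1$) that alternation guarantees. Otherwise two connected $\calh_\nu$-components exist, and geodesicity together with alphabet disjointness restricts them to: (C) two $\calh_\lambda$-edges $w_i, w_j$ ($i<j$, $\lambda\notin\Lambda_1$); or (A) two components in distinct $A$-pieces $w_a, w_b$ ($a<b$, $\nu\in\Lambda_1$). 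Case (C) yields the shorter relation $w_{i+1}\cdots w_{j-1}\,(g^\ast)^{-1}=1$ with $g^\ast := w_{i+1}\cdots w_{j-1} \in H_\lambda$. In case (A), writing $w_a = \overline{u_a}\,q_1\,\overline{v_a}$ and $w_b = \overline{u_b}\,q_2\,\overline{v_b}$ with all partial pieces in $A$, the arc of $c$ from $(q_1)_-$ to $(q_2)_+$ has label in $H_\nu \subseteq A$, giving a reduced relation of length at most $b-a+1$ after merging the resulting consecutive $A$-letters at the boundary; cyclic reducedness excludes the extremal pairing $(a,b)=(1,m)$, so this bound is at most $m-1$, contradicting minimality.

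For the second decomposition, fix $\lambda \notin \Lambda_1$ and apply the same scheme to $\ast_{\mu \in M_\lambda} K_{\lambda,\mu}$ using cycles in $\Gamma(G, Y \sqcup \calk)$ built from single $\calk_{\lambda,\mu}$-edges. Case (A) has no analogue, so only the case (C)-style shortening is needed, while the no-backtracking case is immediately blocked by Condition~($a$) for $(Y, \{K_{\lambda,\mu}\})$ combined with $(\lambda,\mu) \notin M_Y$ (which holds by construction of $\Lambda_1$). Surjectivity $H_\lambda = \langle \calk_\lambda\rangle$ for $\lambda \notin \Lambda_1$ is extracted from the free product decomposition of $G$ already obtained: the canonical retraction $G \to H_\lambda$ sends each $\calk_{\lambda',\mu}$-letter into $K_{\lambda,\mu} \subseteq \langle \calk_\lambda\rangle$ (when $\lambda'=\lambda$) or to $1$, and a parallel analysis of the $Y$-contributions forces $H_\lambda \subseteq \langle \calk_\lambda\rangle$. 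The main obstacle throughout is case (A) in the first part: the naive derivation from a single arc of $c$ can only produce a relation of length equal to $m$, and it is precisely the combination of cyclic reducedness, $A$-piece geodesicity, and alphabet disjointness that forces the strict shortening which closes the induction.
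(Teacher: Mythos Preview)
Your set $\Lambda_1 = \Lambda_X \cup \{\lambda : (\lambda,\mu)\in M_Y \text{ for some }\mu\}$ is exactly the paper's $\Lambda_X \cup f(M_Y)$, and the paper's own proof offers nothing beyond defining this set and asserting that it works. So you are supplying the combinatorics the paper omits. Your minimal-counterexample argument in $\Gamma(G, X \sqcup \calh)$ for the first decomposition is sound: the (C)/(A) case split, the use of geodesicity of $A$-pieces to rule out intra-piece backtracking, and the exclusion of $(a,b)=(1,m)$ via cyclic reducedness all do what you claim. The parallel injectivity argument in $\Gamma(G, Y \sqcup \calk)$ for the map $\ast_\mu K_{\lambda,\mu}\to G$ is likewise fine.

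The gap is surjectivity for the second decomposition, namely $H_\lambda = \langle \calk_\lambda\rangle$ for $\lambda\notin\Lambda_1$. The retraction $\rho_\lambda\colon G\to H_\lambda$ coming from the first decomposition handles $\calk$-letters as you say, but ``a parallel analysis of the $Y$-contributions'' is not an argument: nothing in Condition~($a$) alone forces $\rho_\lambda(y)\in\langle\calk_\lambda\rangle$ for $y\in Y$, and $Y$ need not even be finite. In fact the chosen $\Lambda_1$ can fail this step outright. Take $G=\ast_{n\in\mathbb{N}} H_n$ with each $H_n\cong\mathbb{Z}$ and $M_n=\emptyset$; then $X=\emptyset$ is a relative generating set with respect to $\{H_n\}$, and any ordinary generating set $Y$ of $G$ is a relative generating set with respect to $\{K_{\lambda,\mu}\}=\emptyset$. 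Both instances of Condition~($a$) hold trivially with $\Lambda_X=\emptyset=M_Y$ (the first because a locally minimal cycle without backtracking in $\Gamma(G,\calh)$ would give a cyclically reduced nontrivial word equal to $1$ in a free product), so $\Lambda_1=\emptyset$; yet the conclusion $H_n=\ast_{\mu\in\emptyset}K_{n,\mu}=\{1\}$ is false for every $n$. In the paper's applications of this lemma there is always an ambient finite relative presentation, and Lemma~\ref{2.27'} then gives $H_\lambda=\langle\Omega_\lambda\sqcup\calk_\lambda\rangle$ with $\Omega_\lambda=\emptyset$ for all but finitely many $\lambda$, which supplies the missing surjectivity after enlarging $\Lambda_1$; but as a standalone proof of the lemma as stated, your surjectivity step does not go through.
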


\begin{proof}
We take finite subsets $\Lambda_X \subset \Lambda$ and $M_Y \subset \bigsqcup_{\lambda\in \Lambda}(\{\lambda\} \times M_\lambda)$ (see Notation \ref{not*}). 
We denote by $f$ a natural projection from $\bigsqcup_{\lambda\in \Lambda}(\{\lambda\} \times M_\lambda)$ onto $\Lambda$ such that $f((\lambda,\mu))=\lambda$. 
Let us set $\Lambda_1=\Lambda_X \cup f(M_Y)$. 
The set $\Lambda_1$ satisfies the assertion of this lemma. 
\end{proof}

\section{Relative quasiconvexity for a subgroup}

The aim of this section is to prove Theorem \ref{qc-updown}. 
Before proving Theorem \ref{qc-updown}, we show some properties of pre-quasiconvexity for subgroups and results on Condition ($b$). 

Note that for any $\lambda\in\Lambda$, the group $H_\lambda$ is hyperbolic relative to $\{K_{\lambda,\mu}\}_\mu$ in the setting of Lemma \ref{composition} by Proposition \ref{blowup-a}. 

\begin{Lem}\label{composition}
For some $\lambda_0 \in\Lambda$, let $L$ be a subgroup of $H_{\lambda_0}$. 
Suppose that $G$ is hyperbolic relative to $\{H_\lambda\}$ and $G$ is also hyperbolic relative to $\{K_{\lambda,\mu}\}$. 
If the subgroup $L$ is pre-quasiconvex relative to $\{K_{\lambda,\mu}\}$ in $G$, then $L$ is pre-quasiconvex relative to $\{K_{\lambda_0,\mu}\}_\mu$ in $H_{\lambda_0}$
\end{Lem}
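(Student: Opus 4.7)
The plan is to lift a geodesic in $\Gamma(H_{\lambda_0}, \Omega_{\lambda_0} \sqcup \calk_{\lambda_0})$ to a quasigeodesic in a Cayley graph of $G$ by means of the quasi-isometric embedding of Corollary \ref{embgraph}, apply the pre-quasiconvexity of $L$ in $G$ at that level, and then descend the resulting finite-set closeness back into $H_{\lambda_0}$ by exploiting that the two relevant vertices both lie in $H_{\lambda_0}$.

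First I would fix a finite relative generating set $X$ of $G$ that simultaneously serves as a finite relative generating set with respect to $\{H_\lambda\}$ and with respect to $\{K_{\lambda,\mu}\}$. Proposition \ref{blowup-a} and Lemma \ref{2.27'} then ensure that $H_{\lambda_0}$ is hyperbolic relative to $\{K_{\lambda_0,\mu}\}_\mu$ with $\Omega_{\lambda_0}$ as a finite relative generating set, so pre-quasiconvexity of $L$ in $H_{\lambda_0}$ is tested using the Cayley graph $\Gamma(H_{\lambda_0}, \Omega_{\lambda_0} \sqcup \calk_{\lambda_0})$. By Remark \ref{rem-qc}, the hypothesis supplies a finite subset $Y \subset G$ witnessing pre-quasiconvexity of $L$ in $G$ relative to $\{K_{\lambda,\mu}\}$ with respect to the enlarged finite relative generating set $X \sqcup \Omega$.

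Given $l \in L$ and a geodesic $q$ in $\Gamma(H_{\lambda_0}, \Omega_{\lambda_0} \sqcup \calk_{\lambda_0})$ from $1$ to $l$, Corollary \ref{embgraph} makes $q$ an $(\alpha,\beta)$-quasigeodesic in $\Gamma(G, X \sqcup \Omega \sqcup \calk)$ for constants $\alpha,\beta$ independent of $l$. I would then invoke a fellow-traveling statement for the relatively hyperbolic Cayley graph $\Gamma(G, X \sqcup \Omega \sqcup \calk)$ to the effect that there is a finite subset $Z \subset G$, depending only on $\alpha,\beta$, such that for any geodesic $p$ from $1$ to $l$ in $\Gamma(G, X \sqcup \Omega \sqcup \calk)$ and any vertex $v$ of $q$, there is a vertex $u$ of $p$ with $vu^{-1} \in Z \cup \{1\}$. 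Combining this with pre-quasiconvexity applied to $p$, each vertex $v$ of $q$ admits some $w \in L$ with $vw^{-1}$ in the finite set $ZY \cup Z \cup Y \cup \{1\}$. Since $v$ lies on $q \subset H_{\lambda_0}$ and $w \in L \subset H_{\lambda_0}$, the product $vw^{-1}$ lies in the finite subset $Y' := (ZY \cup Z \cup Y \cup \{1\}) \cap H_{\lambda_0}$ of $H_{\lambda_0}$, which gives the required pre-quasiconvexity of $L$ in $H_{\lambda_0}$ relative to $\{K_{\lambda_0,\mu}\}_\mu$.

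The main obstacle is the fellow-traveling step that produces the finite set $Z$. A Hausdorff bound between a quasigeodesic and a geodesic with common endpoints is a standard consequence of relative hyperbolicity, but one must upgrade such a bound to the stronger conclusion that corresponding vertices differ by an element of a \emph{finite} subset of $G$. I expect this to be handled either by citing a preparatory lemma from the paper's infrastructure, or by a direct argument analogous to the minimal-lift analysis in the proof of Lemma \ref{minimallift}, in which the $\calk$-components of $q$ and $p$ are paired off using a linear relative isoperimetric inequality and the remaining edges are compared in the finite generating set $X \sqcup \Omega$.
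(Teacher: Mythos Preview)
Your proposal is correct and follows essentially the same route as the paper: embed $q$ via Corollary~\ref{embgraph} as a quasigeodesic in $\Gamma(G,X\sqcup\Omega\sqcup\calk)$, fellow-travel with a geodesic $p$, apply pre-quasiconvexity of $L$ in $G$, and intersect the resulting finite set with $H_{\lambda_0}$. The fellow-traveling step you flag as the main obstacle is exactly what the paper handles by citing \cite[Theorem~2.14]{MP08}, which gives a finite subset $Y\subset G$ (not merely a metric bound) such that vertices of two locally minimal quasigeodesics without backtracking with common endpoints are $d_Y$-close; the only small point you omit is checking that $q$, viewed in $\Gamma(G,X\sqcup\Omega\sqcup\calk)$, remains locally minimal and without backtracking, which is immediate since $\calk$ is the disjoint union over all $(\lambda,\mu)$.
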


\begin{proof}
We assume that $G$ has the finite relative presentation (\ref{representation}) with respect to $\{H_\lambda\}$ and $X$ is also a finite relative generating set of $G$ with respect to $\{K_{\lambda,\mu}\}$. 

By Lemma \ref{2.27'}, for any $\lambda\in\Lambda$, the group $H_\lambda$ is finitely generated by $\Omega_\lambda$ relative to $\{K_{\lambda,\mu}\}_\mu$. 
Take an arbitrary $l\in L\setminus \{1\}$. 
Let $p$ and $q$ be geodesics from $1$ to $l$ in $\Gamma(G,X\sqcup\Omega\sqcup\calk)$ and $\Gamma(H_{\lambda_0},\Omega_{\lambda_0}\sqcup\calk_{\lambda_0})$, respectively. 
By Corollary \ref{embgraph}, there are constants $E\geq 1$ and $F\geq 0$ such that the natural embedding from $\Gamma(H_{\lambda_0}, \Omega_{\lambda_0}\sqcup\calk_{\lambda_0})$ into $\Gamma(G,X\sqcup\Omega\sqcup\calk)$ is an $(E,F)$-quasi-isometric embedding. 
If we regard $q$ as a path in $\Gamma(G,X\sqcup\Omega\sqcup\calk)$, then it is a locally minimal $(E,F)$-quasigeodesic without backtracking in $\Gamma(G,X\sqcup\Omega\sqcup\calk)$. 

By \cite[Theorem 2.14]{MP08}, there exists a finite subset $Y$ of $G$ satisfying the following:
For any locally minimal $(E,F)$-quasigeodesics $p_1$ and $p_2$ without backtracking in $\Gamma(G,X\sqcup\Omega\sqcup\calk)$ with $(p_1)_-=(p_2)_-$ and $(p_1)_+=(p_2)_+$ and any vertex $u_1$ of $p_1$ (resp.\ $v_2$ of $p_2$), there is a vertex $v_1$ of $p_2$ (resp.\ $u_2$ of $p_1$) such that $d_Y(u_1,v_1)\leq 1$ (resp.\ $d_Y(v_2,u_2)\leq 1$). 
For each vertex $v$ of $q$, we thus obtain a vertex $\beta_v$ of $p$ with $d_Y(v,\beta_v)\leq 1$ (see Figure \ref{fig7}). 

\begin{figure}[top]
\begin{center}
\includegraphics[height=4.5cm]{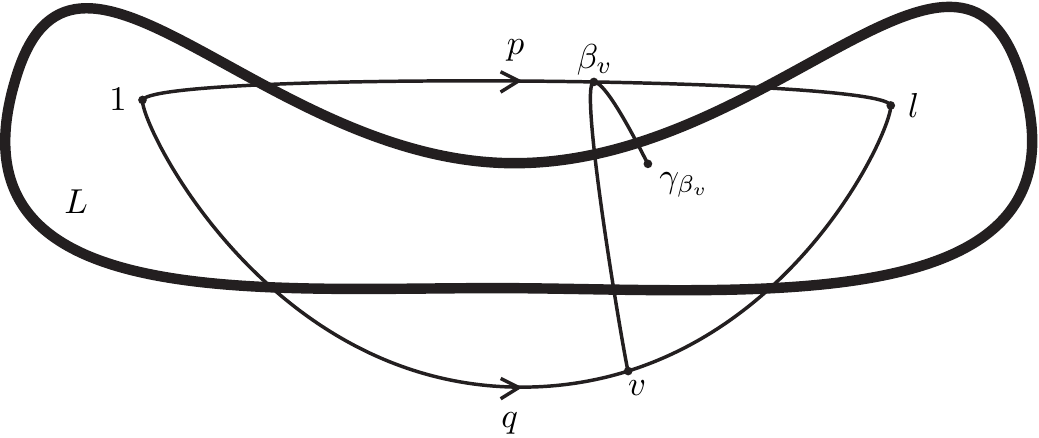}
\end{center}
\caption{$L$ is pre-quasiconvex relative to $\{K_{\lambda,\mu}\}$ in $G$}
\label{fig7}
\end{figure}

If $L$ is pre-quasiconvex relative to $\{K_{\lambda,\mu}\}$ in $G$, then there exists a finite subset $Z$ of $G$ satisfying the following: 
For any geodesic $p_1$ in $\Gamma(G,X\sqcup\Omega\sqcup\calk)$ with $(p_1)_-,(p_1)_+\in L$ and any vertex $u_1$ of $p_1$, there is a vertex $v_1$ of $L$ with $d_Z(u_1,v_1)\leq 1$. 
For each vertex $u$ of $p$, we thereby obtain a vertex $\gamma_u\in L$ with $d_Z(u,\gamma_u)\leq 1$. 
We note that each vertex of $q$ and each vertex of $L$ lie in $H_{\lambda_0}$. 
Put $V=H_{\lambda_0}\cap\{\, g\in G \mid d_{Y\cup Z}(1,g)\leq 2\,\}$. 
The set $V$ is finite. 
For each vertex $v$ of $q$, there are vertices $\beta_v$ of $p$ and $\gamma_{\beta_v}$ of $L$ such that $d_Y(v,\beta_v)\leq 1$ and $d_Z(\beta_v,\gamma_{\beta_v})\leq 1$. 
The inequality $d_V(v,L)\leq d_V(v,\gamma_{\beta_v})\leq 1$ then holds because of $v^{-1}\beta_v\in Y \cup \{1\}$ or $\beta_v^{-1}v\in Y \cup \{1\}$, $\beta_v^{-1}\gamma_{\beta_v}\in Z \cup \{1\}$ or $\gamma_{\beta_v}^{-1}\beta_v\in Z \cup \{1\}$, and $v^{-1}\beta_v\cdot \beta_v^{-1}\gamma_{\beta_v}=v^{-1}\gamma_{\beta_v} \in H_{\lambda_0}$. 
\end{proof}

\begin{Lem}\label{subgr**}
For some $\lambda_0\in\Lambda$, let $Q$ be a subgroup of $H_{\lambda_0}$. 
If $Q$ satisfies Condition ($b$) with respect to $\{K_{\lambda,\mu}\}$ in $G$, then $Q$ satisfies Condition ($b$) with respect to $\{K_{\lambda_0,\mu}\}_\mu$ in $H_{\lambda_0}$. 
\end{Lem}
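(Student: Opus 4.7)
The plan is to simply restrict the hypothesis to the single index $\lambda=\lambda_0$; the lemma should be nearly tautological. First I would take arbitrary $y_1, y_2 \in H_{\lambda_0}$ satisfying $Qy_1 \cap Qy_2 = \emptyset$ (where the cosets are viewed in $H_{\lambda_0}$), and observe that since $Q \subseteq H_{\lambda_0}$ and both $y_1, y_2$ lie in $H_{\lambda_0}$, the cosets $Qy_1$ and $Qy_2$ as subsets of $G$ coincide with the corresponding cosets in $H_{\lambda_0}$. In particular, $Qy_1 \cap Qy_2 = \emptyset$ also holds inside $G$.

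Next I would invoke the hypothesis that $Q$ satisfies Condition ($b$) with respect to $\{K_{\lambda,\mu}\}$ in $G$. Applied to this pair $(y_1, y_2)$, it yields that the set
\[
\scrk(Q, y_1, y_2) = \{K_{\lambda,\mu} \mid \lambda \in \Lambda,\ \mu \in M_\lambda,\ y_1 K_{\lambda,\mu} \cap Qy_2 \neq \emptyset\}
\]
is finite. Finally, the desired set $\{K_{\lambda_0, \mu} \mid \mu \in M_{\lambda_0},\ y_1 K_{\lambda_0, \mu} \cap Q y_2 \neq \emptyset\}$ is precisely the subset of $\scrk(Q, y_1, y_2)$ consisting of those members indexed by $\lambda = \lambda_0$, hence finite. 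This is exactly Condition ($b$) for $Q$ with respect to $\{K_{\lambda_0, \mu}\}_\mu$ in $H_{\lambda_0}$, completing the argument.

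There is essentially no obstacle here: the only point to check is the easy verification that $Qy_1 \cap Qy_2 = \emptyset$ in $H_{\lambda_0}$ if and only if $Qy_1 \cap Qy_2 = \emptyset$ in $G$, which follows immediately from $Q \subseteq H_{\lambda_0}$ and $y_1, y_2 \in H_{\lambda_0}$. The rest is just restricting the finite index set supplied by the hypothesis.
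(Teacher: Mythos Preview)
Your argument is correct and is essentially the same as the paper's proof: both observe that for $y_1,y_2\in H_{\lambda_0}$ with $Qy_1\cap Qy_2=\emptyset$, the set $\{K_{\lambda_0,\mu}\mid \mu\in M_{\lambda_0},\ y_1K_{\lambda_0,\mu}\cap Qy_2\ne\emptyset\}$ is contained in the finite set $\scrk(Q,y_1,y_2)$ supplied by Condition~($b$) in $G$.
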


\begin{proof}
For any $y_1,y_2\in G$ with $Qy_1\cap Qy_2=\emptyset$, we set 
$$
\scrk(Q,y_1,y_2)=\{\, K_{\lambda,\mu} \mid \lambda \in \Lambda, \mu\in M_{\lambda}, y_1K_{\lambda,\mu}\cap Qy_2\ne \emptyset\,\}.
$$ 
For any $y_3,y_4\in H_{\lambda_0}$ with $Qy_3\cap Qy_4=\emptyset$, we put 
$$
\scrk^0(Q,y_3,y_4)=\{\, K_{\lambda_0,\mu} \mid \mu\in M_{\lambda_0}, y_3K_{\lambda_0,\mu}\cap Qy_4\ne \emptyset\,\}.
$$ 
For any $y_3,y_4\in H_{\lambda_0}$ with $Qy_3\cap Qy_4=\emptyset$, it is clear that $\scrk^0(Q,y_3,y_4)\subset \scrk(Q,y_3,y_4)$. 
Because $\scrk(Q,y_1,y_2)$ is finite, the set $\scrk^0(Q,y_3,y_4)$ is also finite. 
\end{proof}

\begin{Prop}\label{nest}
For some $\lambda_0 \in\Lambda$, let $L$ be a subgroup of $H_{\lambda_0}$. 
Suppose that $G$ is hyperbolic relative to $\{H_\lambda\}$ and $\{K_{\lambda,\mu}\}$, respectively. 
Then the following conditions are equivalent:
\begin{enumerate}
\item[(i)]$L$ is quasiconvex relative to $\{K_{\lambda,\mu}\}$ in $G$. 
\item[(ii)]$L$ is quasiconvex relative to $\{K_{\lambda_0,\mu}\}_\mu$ in $H_{\lambda_0}$. 
\end{enumerate}
\end{Prop}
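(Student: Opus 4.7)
The plan is to separate the two ingredients of relative quasiconvexity, namely pre-quasiconvexity and Condition ($b$), and argue that each transfers between the pairs $(G, \{K_{\lambda,\mu}\})$ and $(H_{\lambda_0}, \{K_{\lambda_0,\mu}\}_\mu)$ for subgroups contained in $H_{\lambda_0}$. The direction (i) $\Rightarrow$ (ii) is then immediate: Lemma \ref{composition} transfers pre-quasiconvexity from $G$ down to $H_{\lambda_0}$, and Lemma \ref{subgr**} does the same for Condition ($b$).

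For the harder direction (ii) $\Rightarrow$ (i), I would first handle pre-quasiconvexity. Proposition \ref{blowup-a} supplies that $H_{\lambda_0}$ is itself quasiconvex relative to $\{K_{\lambda,\mu}\}$ in $G$, furnishing a witnessing finite set. For $l \in L$, take a geodesic $p$ from $1$ to $l$ in $\Gamma(G, X \sqcup \Omega \sqcup \calk)$ and a geodesic $q$ from $1$ to $l$ in $\Gamma(H_{\lambda_0}, \Omega_{\lambda_0} \sqcup \calk_{\lambda_0})$. By Corollary \ref{embgraph}, viewing $q$ inside the $G$-Cayley graph produces a locally minimal quasigeodesic without backtracking with universal constants. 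I would then apply \cite[Theorem 2.14]{MP08}, exactly as in the proof of Lemma \ref{composition}, to obtain a finite subset of $G$ for which every vertex of $p$ lies at distance at most one from a vertex of $q$ (which sits in $H_{\lambda_0}$). The hypothesis that $L$ is pre-quasiconvex in $H_{\lambda_0}$ gives a finite subset of $H_{\lambda_0}$ showing that each vertex of $q$ is close to a vertex of $L$ in $\Gamma(H_{\lambda_0}, \Omega_{\lambda_0} \sqcup \calk_{\lambda_0})$; pushing this forward through the quasi-isometric embedding of Corollary \ref{embgraph} upgrades the statement to the $G$-graph. Composing the two closeness statements yields pre-quasiconvexity of $L$ in $G$.

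For Condition ($b$), I would fix $y_1, y_2 \in G$ with $Ly_1 \cap Ly_2 = \emptyset$, set $\mathcal{S} = \{\, K_{\lambda,\mu} \mid y_1 K_{\lambda,\mu} \cap L y_2 \ne \emptyset\,\}$, and split on whether $H_{\lambda_0} y_1 \cap H_{\lambda_0} y_2 = \emptyset$. If it is, then since $L y_2 \subseteq H_{\lambda_0} y_2$, the set $\mathcal{S}$ is contained in the corresponding set with $H_{\lambda_0}$ in place of $L$, finite by Condition ($b$) for $H_{\lambda_0}$ (part of its quasiconvexity from Proposition \ref{blowup-a}). Otherwise write $y_2 = h y_1$ with $h \in H_{\lambda_0}\setminus L$; for each $K_{\lambda,\mu} \in \mathcal{S}$ there are $k \in K_{\lambda,\mu}$ and $l \in L$ with $y_1 k y_1^{-1} = lh \in H_{\lambda_0} \cap y_1 H_\lambda y_1^{-1}$. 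For $\lambda \ne \lambda_0$, almost malnormality of $\{H_\lambda\}$ in $G$ makes this intersection finite, and combined with almost malnormality of $\{K_{\lambda,\mu}\}$ it bounds the number of such $K_{\lambda,\mu}$. For $\lambda = \lambda_0$, I would further split on whether $y_1 \in H_{\lambda_0}$: in the first subcase one absorbs $y_1$ on both sides and applies Condition ($b$) for $L$ in $H_{\lambda_0}$ directly, while in the second subcase almost malnormality again restricts $lh$ to the finite set $H_{\lambda_0} \cap y_1 H_{\lambda_0} y_1^{-1}$.

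The main obstacle I anticipate is the bookkeeping in the second case of Condition ($b$): coordinating the conjugation by $y_1$, the distinction between $\lambda = \lambda_0$ and $\lambda \ne \lambda_0$, and the two different malnormality statements with the intrinsic Condition ($b$) for $L$ in $H_{\lambda_0}$, while keeping every finiteness step clean. The pre-quasiconvexity transfer, by contrast, should be a direct synthesis of Corollary \ref{embgraph}, Lemma \ref{composition}'s technique, and the quasiconvexity of $H_{\lambda_0}$ from Proposition \ref{blowup-a}.
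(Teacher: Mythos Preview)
Your direction (i) $\Rightarrow$ (ii) is exactly the paper's: Lemma~\ref{composition} for pre-quasiconvexity and Lemma~\ref{subgr**} for Condition~($b$).

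For (ii) $\Rightarrow$ (i) the paper takes a completely different and much shorter route. It invokes \cite[Theorem~1.4\,(i)]{M-O-Y1}, the equivalence of relative quasiconvexity with relative undistortion: $L$ quasiconvex in $H_{\lambda_0}$ implies $L$ undistorted in $H_{\lambda_0}$; composing with the quasi-isometric embedding $\Gamma(H_{\lambda_0},\Omega_{\lambda_0}\sqcup\calk_{\lambda_0})\hookrightarrow\Gamma(G,X\sqcup\Omega\sqcup\calk)$ of Corollary~\ref{embgraph} gives $L$ undistorted in $G$; a second application of \cite[Theorem~1.4\,(i)]{M-O-Y1} returns quasiconvexity. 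Pre-quasiconvexity and Condition~($b$) are thus handled in one stroke, with no case analysis. Your pre-quasiconvexity argument is correct and is essentially a hands-on unpacking of part of this.

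Your Condition~($b$) argument, however, has a genuine gap. In Case~2 with $\lambda\ne\lambda_0$ you observe that $H_{\lambda_0}\cap y_1H_\lambda y_1^{-1}$ is finite by almost malnormality of $\{H_\lambda\}$ and claim that, together with almost malnormality of $\{K_{\lambda,\mu}\}$, this bounds the number of relevant $K_{\lambda,\mu}$. But that finite intersection depends on $\lambda$, and almost malnormality gives no control over \emph{how many} indices $\lambda$ produce a nontrivial intersection with the coset $Lh$; you have not bounded the set of $\lambda$'s at all. The same issue recurs one level down: almost malnormality of $\{K_{\lambda,\mu}\}$ only says pairwise conjugate intersections are finite, not that a fixed nontrivial element lies in finitely many $K_{\lambda,\mu}$ (a finite-order element could a priori lie in infinitely many, each pair meeting in a finite group). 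Both finiteness facts you need are true, but they come from Condition~($a$) for $\{H_\lambda\}$ and for $\{K_{\lambda,\mu}\}$ respectively---equivalently, from the free-product decomposition of Lemma~\ref{a1}---not from almost malnormality. With those ingredients your case analysis can be completed, but as written it does not go through; the paper's undistortion argument avoids all of this bookkeeping.
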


\begin{proof}
The implication from (i) to (ii) follows from Lemmas \ref{composition} and \ref{subgr**}. 

We show the implication from (ii) to (i). 
In the setting of Corollary \ref{embgraph}, $\Gamma(H_{\lambda_0},\Omega_{\lambda_0}\sqcup\calk_{\lambda_0})$ is quasi-isometrically embedded into $\Gamma(G,X\sqcup\Omega\sqcup\calk)$. 
When $L$ is quasiconvex relative to $\{K_{\lambda_0,\mu}\}_\mu$ in $H_{\lambda_0}$, $L$ is undistorted relative to $\{K_{\lambda_0,\mu}\}_\mu$ in $H_{\lambda_0}$ by \cite[Theorem 1.4 (i)]{M-O-Y1}. 
The subgroup $L$ is hence undistorted relative to $\{K_{\lambda,\mu}\}$ in $G$.
By \cite[Theorem 1.4 (i)]{M-O-Y1}, $L$ is also quasiconvex relative to $\{K_{\lambda,\mu}\}$ in $G$. 
\end{proof}

Let $A$ be a subset of $G$. 
For a subset $X$ of $G$ and a constant $k\geq 0$, we put $N_{X,k}(A)=\{g\in G \ | \ d_X(g,A)\leq k\}$. 

\begin{proof}[Proof of Theorem \ref{qc-updown}]
We assume that $G$ has the finite relative presentation (\ref{representation}) with respect to $\{H_\lambda\}$ and $X$ is also a finite relative generating set of $G$ with respect to $\{K_{\lambda,\mu}\}$. 

We prove that Condition (i) implies Condition (iii), Condition (iii) implies Condition (ii), and Condition (i) follows from Condition (ii). 

(i) $\Rightarrow$ (iii) 
We show that $L$ is quasiconvex relative to $\{H_\lambda\}$ in $G$. 

\begin{Claim}\label{claim1}
The group $L$ is pre-quasiconvex relative to $\{H_\lambda\}$ in $G$. 
\end{Claim}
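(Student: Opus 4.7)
The plan is to prove the claim by passing through the intermediate Cayley graph $\Gamma(G, X\sqcup \Omega \sqcup {\calk})$ and combining a Morse-type lemma with the hypothesis on $L$. First, fix an arbitrary $l\in L$ and let $p$ be a geodesic from $1$ to $l$ in $\Gamma(G, X\sqcup \calh)$. Since $p$ is locally minimal and without backtracking, Lemma \ref{minimallift} (applied with $\alpha=1,\beta=0$) yields a minimal lift $\widehat{p}$ in $\Gamma(G, X\sqcup \Omega \sqcup {\calk})$ which is a locally minimal $(C,D)$-quasigeodesic without backtracking. Crucially, every vertex of $p$ is itself a vertex of $\widehat{p}$.

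Next, let $q$ be a geodesic from $1$ to $l$ in $\Gamma(G, X\sqcup \Omega \sqcup {\calk})$. Since $G$ is hyperbolic relative to $\{K_{\lambda,\mu}\}$ and $X\sqcup \Omega$ is a finite relative generating set of $G$ with respect to $\{K_{\lambda,\mu}\}$, the Morse-type lemma \cite[Theorem 2.14]{MP08} (applied exactly as in the proof of Lemma \ref{composition}) produces a finite subset $Y_1 \subset G$ such that every vertex of $\widehat{p}$ lies within $d_{Y_1}$-distance at most $1$ of some vertex of $q$. In particular this holds for every vertex of $p$.

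Now invoke hypothesis (i): $L$ is quasiconvex relative to $\{K_{\lambda,\mu}\}$ in $G$. By Remark \ref{rem-qc}, this property is independent of the choice of finite relative generating set, so it holds with respect to $X\sqcup \Omega$. Hence there is a finite subset $Y_2 \subset G$ such that each vertex of $q$ lies within $d_{Y_2}$-distance at most $1$ of some vertex of $L$. Composing the two approximations and setting $Y=(Y_1 \cup Y_2 \cup Y_1 Y_2 \cup Y_2 Y_1)\setminus \{1\}$ (symmetrized), one obtains for each vertex $v$ of $p$ a vertex $w\in L$ with $d_Y(v,w)\leq 1$. This is precisely pre-quasiconvexity of $L$ relative to $\{H_\lambda\}$ in $G$ with respect to $X$.

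The main obstacle is checking that \cite[Theorem 2.14]{MP08} can be applied inside $\Gamma(G, X\sqcup \Omega \sqcup {\calk})$ rather than inside $\Gamma(G, X\sqcup \calh)$; this is precisely the bootstrap already carried out in Lemma \ref{composition}, which uses that $\Omega$ is finite (because (\ref{representation}) is finite) so that $X\sqcup \Omega$ is a finite relative generating set with respect to $\{K_{\lambda,\mu}\}$, together with relative hyperbolicity of $G$ with respect to $\{K_{\lambda,\mu}\}$. The rest is routine bookkeeping with finite subsets of $G$.
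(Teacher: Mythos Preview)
Your proof is correct and follows essentially the same approach as the paper: lift the geodesic $p$ in $\Gamma(G,X\sqcup\calh)$ to a quasigeodesic $\widehat{p}$ in $\Gamma(G,X\sqcup\Omega\sqcup\calk)$ via Lemma~\ref{minimallift}, compare $\widehat{p}$ to a genuine geodesic $q$ in that graph using \cite[Theorem~2.14]{MP08}, and then apply pre-quasiconvexity of $L$ relative to $\{K_{\lambda,\mu}\}$ along $q$. The paper concludes with $d_{T\cup Z}(u,L)\le 2$ rather than enlarging the finite set to force distance $\le 1$ as you do, but this is an immaterial cosmetic difference.
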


\begin{proof}
By Lemma \ref{minimallift}, there exist constants $C\geq 1$ and $D\geq 0$ satisfying the following:
For any geodesic $p_1$ in $\Gamma(G,X\sqcup\calh)$, a minimal lift $\widehat{p_1}$ of $p_1$ is a locally minimal $(C,D)$-quasigeodesic without backtracking in $\Gamma(G,X\sqcup\Omega\sqcup\calk)$. 

By \cite[Theorem 2.14]{MP08}, there is a finite subset $T$ of $G$ satisfying the following: 
For any two locally minimal $(C,D)$-quasigeodesics $q_1$ and $q_2$ without backtracking in $\Gamma(G,X\sqcup \Omega\sqcup\calk)$ with $(q_1)_-=(q_2)_-$ and $(q_1)_+=(q_2)_+$ and any vertex $u_1$ of $q_1$ (resp.\ $v_2$ of $q_2$), there exists a vertex $v_1$ of $q_2$ (resp.\ $u_2$ of $q_1$) such that $d_T(u_1,v_1)\leq 1$ (resp.\ $d_T(v_2,u_2)\leq 1$). 

Since $L$ is pre-quasiconvex relative to $\{K_{\lambda,\mu}\}$ in $G$, there exists a finite subset $Z$ of $G$ satisfying the following:
For any geodesic $q_1$ in $\Gamma(G,X\sqcup\Omega\sqcup\calk)$ with $(q_1)_-,(q_1)_+\in L$ and any vertex $v_1$ of $q_1$, the inequality $d_Z(v_1,L)\leq 1$ holds. 

Let $l$ be an arbitrary element of $L\setminus \{1\}$ and let $p$ be a geodesic from $1$ to $l$ in $\Gamma(G,X\sqcup\calh)$. 
Let $q$ be a geodesic in $\Gamma(G,X\sqcup\Omega\sqcup\calk)$ from $1$ to $l$. 
For any vertex $u$ of $\widehat{p}$, there is a vertex $v$ of $q$ such that $d_T(u,v)\leq 1$. 

Since the set of vertices of $p$ is contained in that of $\widehat{p}$, we thereby obtain that for any vertex $u$ of $p$, there is a vertex $v$ of $q$ such that $d_{T\cup Z}(u,L)\leq d_T(u,v)+d_Z(v,L)\leq 2$. 
\end{proof}

\begin{Claim}\label{claim2}
The group $L$ satisfies Condition ($b$) with respect to $\{H_\lambda\}$. 
\end{Claim}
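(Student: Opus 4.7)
The goal of Claim \ref{claim2} is to verify Condition ($b$) of $L$ with respect to $\{H_\lambda\}$: whenever $Ly_1 \cap Ly_2 = \emptyset$, the set $\scrh(L, y_1, y_2) = \{H_\lambda \mid y_1 H_\lambda \cap Ly_2 \ne \emptyset\}$ is finite. By the assumed quasiconvexity of $L$ with respect to $\{K_{\lambda,\mu}\}$, the corresponding set $\scrk(L, y_1', y_2') = \{K_{\lambda,\mu} \mid y_1' K_{\lambda,\mu} \cap Ly_2' \ne \emptyset\}$ is finite for every pair $(y_1', y_2')$ with $Ly_1' \cap Ly_2' = \emptyset$. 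The plan is to deduce the former finiteness from the latter using Lemma \ref{lem_2.27}, Lemma \ref{minimallift}, and the almost malnormality of $\{H_\lambda\}$.

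For each $\lambda \in \scrh(L, y_1, y_2)$, fix $h_\lambda \in H_\lambda \setminus \{1\}$ and $l_\lambda \in L$ with $y_1 h_\lambda = l_\lambda y_2$; note that $|l_\lambda|_{X \sqcup \calh} \le |y_1|_{X\sqcup \calh} + 1 + |y_2|_{X \sqcup \calh}$ uniformly in $\lambda$. Fix words $\sigma_{y_1}, \sigma_{y_2}$ over $X \sqcup \calh$ representing $y_1, y_2$, and in $\Gamma(G, X \sqcup \calh)$ form the cycle $c_\lambda$ by concatenating a geodesic $p_\lambda$ from $1$ to $l_\lambda$, the path $l_\lambda \sigma_{y_2}$, the edge $e_\lambda^{-1}$ labeled $h_\lambda^{-1}$, and $\sigma_{y_1}^{-1}$; then $l(c_\lambda)$ is uniformly bounded by some constant $N$. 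Dichotomize according to whether the $H_\lambda$-component $e_\lambda^{-1}$ is isolated in $c_\lambda$. In the isolated case, Lemma \ref{lem_2.27} yields $h_\lambda \in \langle \Omega \rangle$ with $|h_\lambda|_\Omega \le MBN$, so $h_\lambda$ ranges over a finite subset of $G$; almost malnormality of $\{H_\lambda\}$ then forces only finitely many $\lambda$ to contain any such $h_\lambda$. In the connected case, if the paired $H_\lambda$-component sits inside $\sigma_{y_1}^{-1}$ or $l_\lambda \sigma_{y_2}$, then $\lambda$ must appear among the indices of the (finitely many) $\calh$-syllables of the fixed words $\sigma_{y_1}, \sigma_{y_2}$, leaving only finitely many $\lambda$.

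The hard case is when the paired $H_\lambda$-component $a_\lambda$ sits inside the $\lambda$-dependent geodesic $p_\lambda$. Its endpoints then lie in the coset $y_1 H_\lambda$, and by the pre-quasiconvexity proved in Claim \ref{claim1} each endpoint is $d_Y$-close to $L$ for a fixed finite $Y \subset G$; writing such an endpoint as $l' z$ with $l' \in L$ and $z \in Y \cup \{1\}$ shows $H_\lambda \in \scrh(L, y_1, z)$ for some $z$ in the fixed finite set $Y \cup \{1\}$. The finitely many values of $z$ satisfying $Ly_1 \cap Lz \ne \emptyset$ are handled separately; for the remaining $z$, the plan is to pass to the minimal lift $\widehat p_\lambda$ in $\Gamma(G, X \sqcup \Omega \sqcup \calk)$ via Lemma \ref{minimallift}, apply the fellow-traveling argument from Claim \ref{claim1} (using \cite[Theorem~2.14]{MP08}), and combine this with pre-quasiconvexity of $L$ relative to $\{K_{\lambda,\mu}\}$ to extract, for all but finitely many $\lambda$, a witness $K_{\lambda,\mu(\lambda)}$ belonging to the finite set $\bigcup_{z} \scrk(L, y_1, z)$, where the union is taken over those $z \in Y \cup \{1\}$ with $Ly_1 \cap Lz = \emptyset$. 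Since $\lambda \mapsto (\lambda, \mu(\lambda))$ is injective (its values determine $\lambda$ in the first coordinate), $\lambda$ is then confined to a finite set, completing the claim.

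The main obstacle is precisely this last sub-case: extracting the required $\calk_\lambda$-edge of $\widehat p_\lambda$ whose corresponding $K_{\lambda,\mu}$-coset meets some $Lz$-translate, and organizing the bookkeeping so that the resulting $(\lambda, \mu(\lambda))$ really lies in the finite union above. This step is the technical heart of the argument and requires careful coordination of fellow-traveling between $p_\lambda$ and $\widehat p_\lambda$, pre-quasiconvexity in both the $\{H_\lambda\}$ and $\{K_{\lambda,\mu}\}$ pictures, and the finiteness of $\scrk$.
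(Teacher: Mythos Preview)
Your first two cases are sound, though the isolated case is simpler than you state: Lemma~\ref{lem_2.27} actually gives $h_\lambda \in \langle \Omega_\lambda \rangle$, so $h_\lambda \ne 1$ forces $\Omega_\lambda \ne \emptyset$ and hence $\lambda$ lies in the finite set $\Lambda_1 = \{\lambda : \Omega_\lambda \ne \emptyset\}$. Almost malnormality alone does not suffice here, since it does not prevent a fixed torsion element from lying in infinitely many $H_\lambda$.

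The genuine gap is the ``hard case''. Your reduction to $H_\lambda \in \scrh(L, y_1, z)$ for $z$ in a fixed finite set is circular---it is the same type of statement you are proving---and when $Lz = Ly_1$ the set $\scrh(L, y_1, z)$ contains every $H_\lambda$, so those $z$ cannot be ``handled separately'' as you assert. More seriously, lifting $p_\lambda$ rather than the edge $e_\lambda$ discards the only usable constraint $Ly_1 \ne Ly_2$: the subpath $\widehat{a_\lambda}$ has both endpoints in the \emph{same} coset $y_1 H_\lambda$, and its vertices lie in $y_1 H_\lambda$ but not in any particular $y_1 K_{\lambda,\mu}$. There is thus no mechanism for producing a witness $K_{\lambda,\mu(\lambda)} \in \scrk(L, y_1, z)$ as your sketch requires, nor for forcing a jump between distinct $L$-cosets along $\widehat{a_\lambda}$.

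The paper avoids the cycle-and-dichotomy altogether. After setting aside the finitely many $\lambda \in \Lambda_1$, for $\lambda \notin \Lambda_1$ one has $H_\lambda = \ast_\mu K_{\lambda,\mu}$, so the minimal lift $\widehat{s}$ of the single edge $s$ labeled $h$ is a word $k_1\cdots k_n$ in $\calk_\lambda$ running from $y_1$ to $ly_2$. Enlarging the generating set by $Y_0 = \{y_1, y_2\} \setminus \{1\}$ and padding with single edges for $y_1$ and $y_2^{-1}$, one obtains a path $r = e_1 \widehat{s} e_2$ from $1$ to $l$ which is a locally minimal quasigeodesic without backtracking in $\Gamma(G, X \sqcup \Omega \sqcup (Y_0 \cup Y_0^{-1}) \sqcup \calk)$ by Lemma~\ref{minimallift} and \cite[Lemma~3.5]{Osi06}. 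Pre-quasiconvexity of $L$ relative to $\{K_{\lambda,\mu}\}$ combined with \cite[Theorem~2.14]{MP08} then writes each vertex $v_i = y_1 k_1 \cdots k_i$ as $l_i y_{j_i}$ with $y_{j_i}$ drawn from a fixed finite set $Y$ of representatives of distinct $L$-cosets. Because $v_0 \in Ly_1$ and $v_n \in Ly_2$ with $Ly_1 \ne Ly_2$, some consecutive pair has $Ly_{j_{i-1}} \ne Ly_{j_i}$, whence $K_{\lambda,\mu_i} \in \bigcup_{j \ne j'} \scrk(L, y_j, y_{j'})$; this union is finite by Condition~($b$) for $\{K_{\lambda,\mu}\}$. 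The key point is that lifting the edge $s$ (rather than the geodesic $p_\lambda$) keeps the endpoints exactly at $y_1$ and $ly_2$, so the hypothesis $Ly_1 \ne Ly_2$ is directly available to force the coset jump.
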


\begin{proof}
Let $y_1$ and $y_2$ be arbitrary elements of $G$ with $Ly_1\cap Ly_2=\emptyset$. 
We prove that $\scrh(L,y_1,y_2)=\{\, H_\lambda \mid \lambda \in \Lambda, y_1H_\lambda\cap Ly_2\ne\emptyset\,\}$ is a finite set. 

Take an arbitrary element $H_\lambda$ of $\scrh(L,y_1,y_2)$. 
Let $l$ be an element of $L$ and $h$ be an element of $H_\lambda$ with $y_1h=ly_2$. 
The path in $\Gamma(G,X\sqcup\calh)$ from $y_1$ to $ly_2$ labeled by $h$ is denoted by $s$. 
By Lemma \ref{2.27'}, for any $\lambda\in\Lambda$, the group $H_\lambda$ is finitely generated by $\Omega_\lambda$ relative to $\{K_{\lambda,\mu}\}_\mu$. 
Put $\Lambda_1=\{\lambda \in \Lambda \mid \Omega_\lambda\ne \emptyset\}$ and note that $\Lambda_1$ is finite because $\Omega$ is a finite set. 
For each $\lambda\in\Lambda\setminus \Lambda_1$, the group $H_\lambda$ is finitely generated by $\emptyset$ relative to $\{K_{\lambda,\mu}\}_\mu$. 
In other words, for each $\lambda\in \Lambda\setminus \Lambda_1$, we have $H_\lambda=\ast_{\mu \in M_\lambda}K_{\lambda,\mu}$. 

When $\lambda\in\Lambda\setminus\Lambda_1$, we take a minimal lift $\widehat{s}$ of $s$ and set $\phi(\widehat{s})=k_1k_2\cdots k_n$, where for each $i=1,2,\ldots,n$, the index $\mu_i\in M_\lambda$ and $k_i\in K_{\lambda,\mu_i}\setminus \{1\}$. 
We put $Y_0=\{\,y_1,y_2\,\}\setminus \{1\}$. 
For the constants $C$ and $D$, there are constants $C_1 \geq 1$ and $D_1 \geq 0$ satisfying the following:
For any locally minimal $(C, D)$-quasigeodesic $q_1$ without backtracking in $\Gamma(G,X \sqcup \Omega \sqcup \calk)$, $q_1$ is also a locally minimal $(C_1,D_1)$-quasigeodesic without backtracking in $\Gamma(G,X \sqcup \Omega \sqcup (Y_0\cup Y_0^{-1}) \sqcup \calk)$.
We take a path $r=e_1\widehat{s}e_2$ in $\Gamma(G,X\sqcup\Omega\sqcup (Y_0\cup Y_0^{-1}) \sqcup\calk)$, where $e_1$ is labeled by $y_1$ and $e_2$ is labeled by $y_2^{-1}$. 
By \cite[Lemma 3.5]{Osi06}, the path $r$ is a locally minimal $(C_1,2C_1+D_1+2)$-quasigeodesic without backtracking in $\Gamma(G,X\sqcup\Omega\sqcup (Y_0\cup Y_0^{-1}) \sqcup\calk)$. 
Since $L$ is pre-quasiconvex relative to $\{K_{\lambda,\mu}\}$ in $G$, there is a finite subset $Z_1$ of $G$ satisfying the following by \cite[Theorem 2.14]{MP08}: 
For any locally minimal $(C_1,2C_1+D_1+2)$-quasigeodesic $q_1$ without backtracking in $\Gamma(G,X\sqcup\Omega\sqcup (Y_0\cup Y_0^{-1})\sqcup\calk)$ with $(q_1)_-,(q_1)_+\in L$ and any vertex $u_1$ of $q_1$, the inequality $d_{Z_1}(u_1,L)\leq 1$ holds. 

Let us take a finite set $Y_1=\{\,y \in G \mid d_{Z_1}(1,y)\leq 1 \,\}$. 
Take a finite subset $Y$ of the set $Y_1 \cup \{y_1,y_2\}$ such that $\{y_1,y_2\}\subset Y$, the elements of $Y$ lie in mutually distinct $L$-orbits and $Y_1 \subset LY$. 
We denote by $Y=\{\,y_1,y_2,\ldots,y_m\,\}$ and note that for any distinct $i, j=1,2,\ldots,m$, the equality $Ly_i \cap Ly_j=\emptyset$ holds. 

For each $i=1,2,\ldots, n$, we set $v_i=y_1k_1k_2\cdots k_i$ in $G$. 
Note that $v_n=y_1k_1k_2\cdots k_n=y_1h=ly_2$ in $G$. 
By the construction of $Y$, for each $i=1,2,\ldots,n$, there exist $l_i\in L$ and $y_{j_i} \in Y$ such that $v_i=l_iy_{j_i}$. 
Since $Ly_1\cap Ly_2 = \emptyset$, it follows that there is $i\in \{1,2,\ldots, n\}$ such that $Ly_{j_{i-1}}\cap Ly_{j_i}=\emptyset$. 
We hence obtain that $k_i\in K_{\lambda,\mu_i}\in \scrk(L,y_{j_{i-1}},y_{j_i})=\{\,K_{\lambda_1,\mu_1} \mid \lambda_{1} \in \Lambda, \mu_{1} \in M_{\lambda_{1}}, y_{j_{i-1}}K_{\lambda_1,\mu_1}\cap Ly_{j_i}\ne \emptyset\,\} \subset \bigcup_{i\ne j}^m\scrk(L,y_i,y_j)$. 
We set 
$$
\Lambda_2=\{\,\lambda_1\in \Lambda\setminus \Lambda_1 \mid \text{there exists }\mu\in M_{\lambda_1} \text{ such that } K_{\lambda_1,\mu}\in \bigcup_{i\ne j}^m\scrk(L,y_i,y_j)\,\}.
$$ 
Note that $\Lambda_2$ is finite because $L$ satisfies Condition ($b$) with respect to $\{K_{\lambda,\mu}\}$. 
It follows that $\scrh(L,y_1,y_2)$ is finite because of $\scrh(L,y_1,y_2)\subset \{\,H_\lambda \mid \lambda\in \Lambda_1\cup \Lambda_2\,\}$.  
\end{proof}

By Claims \ref{claim1} and \ref{claim2}, the group $L$ is quasiconvex relative to $\{H_\lambda\}$ in $G$. 

\begin{Claim}\label{claim3}
For any $\lambda\in\Lambda$ and $g\in G$, the subgroup $gLg^{-1}\cap H_\lambda$ is quasiconvex relative to $\{K_{\lambda,\mu}\}_\mu$ in $H_\lambda$. 
\end{Claim}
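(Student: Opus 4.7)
The plan is to reduce the claim to a statement about quasiconvexity in $G$ and then descend via Proposition~\ref{nest}. Explicitly, since $gLg^{-1}\cap H_\lambda$ is a subgroup of $H_\lambda$, Proposition~\ref{nest} says that it suffices to prove that $gLg^{-1}\cap H_\lambda$ is quasiconvex relative to $\{K_{\lambda,\mu}\}$ in $G$. So the task becomes: intersect two subgroups of $G$ that are already known to be quasiconvex relative to $\{K_{\lambda,\mu}\}$ in $G$.

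First, the conjugate $gLg^{-1}$ is quasiconvex relative to $\{K_{\lambda,\mu}\}$ in $G$. This is the standard conjugation invariance of relative quasiconvexity: left multiplication by $g$ is a label-preserving isometry of $\Gamma(G,X\sqcup\calk)$ that sends $L$ to $gLg^{-1}$ and geodesics to geodesics, so a finite subset witnessing pre-quasiconvexity and Condition ($b$) for $L$ yields, after a harmless enlargement, such a subset for $gLg^{-1}$. Second, by Proposition~\ref{blowup-a}, the subgroup $H_\lambda$ is itself quasiconvex relative to $\{K_{\lambda,\mu}\}$ in $G$.

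The main technical step is then to invoke the fact that the intersection of two subgroups which are quasiconvex relative to $\{K_{\lambda,\mu}\}$ in $G$ is again quasiconvex relative to $\{K_{\lambda,\mu}\}$ in $G$. This intersection theorem for relatively quasiconvex subgroups is the authors' result in \cite{M-O-Y1} (extending earlier work of Hruska and Mart\'{\i}nez-Pedroza in the countable, finitely-generated case). Applying it to the pair $gLg^{-1}$ and $H_\lambda$ yields that $gLg^{-1}\cap H_\lambda$ is quasiconvex relative to $\{K_{\lambda,\mu}\}$ in $G$, and Proposition~\ref{nest} then converts this to quasiconvexity relative to $\{K_{\lambda,\mu}\}_\mu$ in $H_\lambda$, as desired.

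The main obstacle is ensuring the intersection result is available in the required generality (neither $G$ nor $\Lambda$ assumed countable, the families $\{K_{\lambda,\mu}\}$ possibly infinite); this is where one genuinely uses the authors' prior work \cite{M-O-Y1} rather than the classical references. Should one need to unpack it here, the pre-quasiconvexity part follows from slim-triangle arguments applied to the minimal lifts produced by Lemma~\ref{minimallift}, while Condition ($b$) for the intersection follows because any double coset contributing to $\scrk(gLg^{-1}\cap H_\lambda, y_1, y_2)$ contributes simultaneously to $\scrk(gLg^{-1},y_1,y_2)$ and to $\scrk(H_\lambda, y_1, y_2)$, both of which are finite by hypothesis.
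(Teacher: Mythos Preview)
Your proposal is correct and follows essentially the same route as the paper: the paper uses conjugation invariance (\cite[Corollary 4.24]{M-O-Y1}), Proposition~\ref{blowup-a}, the intersection theorem (\cite[Theorem 1.3]{M-O-Y1}), and then Proposition~\ref{nest}, in exactly the order you describe. Your added remarks on why conjugation invariance and the intersection property hold are accurate but are not needed, since the paper simply cites the corresponding results from \cite{M-O-Y1}.
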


\begin{proof}
We fix arbitrary $\lambda\in\Lambda$ and $g\in G$. 
By \cite[Corollary 4.24]{M-O-Y1}, the group $gLg^{-1}$ is also quasiconvex relative to $\{K_{\lambda,\mu}\}$ in $G$. 
By Proposition \ref{blowup-a}, the group $H_\lambda$ is quasiconvex relative to $\{K_{\lambda,\mu}\}$ in $G$. 
By \cite[Theorem 1.3]{M-O-Y1}, the group $gLg^{-1}\cap H_\lambda$ is quasiconvex relative to $\{K_{\lambda,\mu}\}$ in $G$. 
It follows from Proposition \ref{nest} that $gLg^{-1}\cap H_\lambda$ is quasiconvex relative to  $\{K_{\lambda,\mu}\}_\mu$ in $H_\lambda$.  
\end{proof}

\begin{Claim}\label{claim4}
For each $g\in G$, there exists a finite subset $\Lambda_g$ of $\Lambda$ satisfying the following:
For any $\lambda\in\Lambda\setminus \Lambda_g$, the group $gLg^{-1}\cap H_\lambda$ is decomposed into a free product $gLg^{-1}\cap H_\lambda =\ast_{\mu\in M_\lambda}(gLg^{-1}\cap K_{\lambda,\mu})$. 
\end{Claim}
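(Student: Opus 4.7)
The plan is to build $\Lambda_g$ as the union of two finite sets. The first, $\Lambda_1 := \{\lambda \in \Lambda : \Omega_\lambda \ne \emptyset\}$, is the one that appears in the proof of Claim \ref{claim2}: it is finite because $\Omega$ is finite, and on its complement one has the free product splitting $H_\lambda = \ast_{\mu \in M_\lambda} K_{\lambda,\mu}$. The second will be a finite set of indices cut out by Condition ($b$) applied to the conjugate $gLg^{-1}$, which is quasiconvex relative to $\{K_{\lambda,\mu}\}$ by \cite[Corollary 4.24]{M-O-Y1}.

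Fix $\lambda \in \Lambda \setminus \Lambda_1$ and any non-trivial $q \in Q := gLg^{-1} \cap H_\lambda$. Writing $q = k_1 \cdots k_n$ in free-product normal form (so $k_i \in K_{\lambda,\mu_i}\setminus\{1\}$ and $\mu_i \ne \mu_{i+1}$), the word $k_1 \cdots k_n$ is a geodesic in $\Gamma(H_\lambda, \calk_\lambda)$, so by Corollary \ref{embgraph} the induced path $\pi$ from $1$ to $q$ in $\Gamma(G, X \sqcup \Omega \sqcup \calk)$ is a locally minimal $(E,F)$-quasigeodesic without backtracking, with constants $(E,F)$ independent of $\lambda$ and $q$ (the uniformity being traceable through Lemma \ref{minimallift}). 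Combining \cite[Theorem 2.14]{MP08} with pre-quasiconvexity of $gLg^{-1}$ relative to $\{K_{\lambda,\mu}\}$, I then extract a finite set $Z'' \subset G$, depending on $g$ but not on $\lambda$ or $q$, such that every vertex $v_i := k_1 \cdots k_i$ of $\pi$ decomposes as $v_i = q_i z_i$ with $q_i \in gLg^{-1}$ and $z_i \in Z'' \cup \{1\}$, and $z_0 = z_n = 1$. The step relation $v_{i+1} = v_i k_{i+1}$ then forces $z_i k_{i+1} z_{i+1}^{-1} = q_i^{-1} q_{i+1} \in gLg^{-1}$, whence $z_i K_{\lambda, \mu_{i+1}} \cap gLg^{-1} z_{i+1} \ne \emptyset$.

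Now I apply Condition ($b$) to $gLg^{-1}$ relative to $\{K_{\lambda,\mu}\}$: for every pair $(z, z') \in (Z'' \cup \{1\})^2$ with $gLg^{-1} z \cap gLg^{-1} z' = \emptyset$, the set $\scrk(gLg^{-1}, z, z')$ is finite. Let $\calk_g^{\mathrm{bad}}$ be the union of these finite sets over all such pairs, and set $\Lambda_g := \Lambda_1 \cup \{\lambda \in \Lambda : K_{\lambda,\mu} \in \calk_g^{\mathrm{bad}} \text{ for some } \mu \in M_\lambda\}$, which is finite. For $\lambda \notin \Lambda_g$, the fact that $K_{\lambda,\mu_{i+1}} \notin \calk_g^{\mathrm{bad}}$ together with $z_i K_{\lambda,\mu_{i+1}} \cap gLg^{-1} z_{i+1} \ne \emptyset$ forces $gLg^{-1} z_i = gLg^{-1} z_{i+1}$, equivalently $z_i z_{i+1}^{-1} \in gLg^{-1}$. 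Starting from $z_0 = 1$, induction gives $z_i \in gLg^{-1}$ for every $i$, hence $v_i \in gLg^{-1} \cap H_\lambda = Q$, and each syllable $k_{i+1} = v_i^{-1} v_{i+1}$ lies in $Q \cap K_{\lambda, \mu_{i+1}}$. Thus $Q$ is generated by $\bigcup_\mu (Q \cap K_{\lambda,\mu})$; since the natural map $\ast_\mu (Q \cap K_{\lambda,\mu}) \to H_\lambda = \ast_\mu K_{\lambda,\mu}$ is automatically injective, the desired decomposition $Q = \ast_{\mu \in M_\lambda}(Q \cap K_{\lambda,\mu})$ follows.

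The main obstacle I anticipate is securing the uniformity of both the quasigeodesic constants for $\pi$ and the tracking set $Z''$ across all $\lambda \notin \Lambda_1$ and all $q \in Q$; this uniformity is precisely what lets a single finite $\calk_g^{\mathrm{bad}}$ (and hence a single $\Lambda_g$) govern every $q$ simultaneously. Once it is established, the remainder is a bookkeeping application of Condition ($b$) combined with the free-product subgroup structure.
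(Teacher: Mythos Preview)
Your proof is correct and follows essentially the same route as the paper: both use the free-product splitting $H_\lambda=\ast_\mu K_{\lambda,\mu}$ outside a finite index set, track the normal-form vertices by $gLg^{-1}$ via quasiconvexity, apply Condition~($b$) to $gLg^{-1}$ to produce $\Lambda_g$, and induct to show every vertex lies in $gLg^{-1}\cap H_\lambda$. The only variation is that the paper further enlarges your $\Lambda_1$ using Condition~($a$) for $\{K_{\lambda,\mu}\}$ (via Lemma~\ref{a1}) so that the normal-form path is an actual geodesic in $\Gamma(G,X\sqcup\Omega\sqcup\calk)$ and the uniformity issue you flagged disappears; your detour through Lemma~\ref{minimallift} and \cite[Theorem~2.14]{MP08} is a valid alternative, since the constants in Lemma~\ref{minimallift} depend only on the fixed Dehn-function data and not on~$\lambda$.
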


\begin{proof}
Since $G$ satisfies Condition ($a$) with respect to $\{H_\lambda\}$ and $\{K_{\lambda,\mu}\}$, respectively, there are finite subsets $\Lambda_X$ of $\Lambda$ and $M_X$ of $\bigsqcup_{\lambda\in\Lambda}(\{\lambda\}\times M_\lambda)$ (see Notation \ref{not*}). 

Set $\Lambda_3=\Lambda_X\cup \{\,\lambda\in \Lambda \mid \text{there exists } \mu\in M_\lambda \text{ such that } (\lambda,\mu)\in M_X\,\}$. 
We note that $G=\langle\, X\sqcup\left(\bigsqcup_{\lambda\in\Lambda_3}\calh_\lambda\right) \,\rangle\ast \left(\ast_{\lambda\notin\Lambda_3}H_\lambda\right)$, where $\langle\, X\sqcup\left(\bigsqcup_{\lambda\in\Lambda_3}\calh_\lambda\right) \,\rangle$ means the subgroup of $G$ generated by $X\sqcup\left(\bigsqcup_{\lambda\in\Lambda_3}\calh_\lambda\right)$, and for any $\lambda\notin\Lambda_3$, the equality $H_\lambda=\ast_{\mu\in M_\lambda}K_{\lambda,\mu}$ holds (see Lemma \ref{a1}). 
By \cite[Lemma 4.16 and Theorem 1.4 (i)]{M-O-Y1}, the group $g L g^{-1}$ is quasiconvex relative to $\{K_{\lambda,\mu}\}$ in $G$. 
For any $g \in G$, there is a finite subset $U_g$ of $G$ satisfying the following:
For any geodesic $q_1$ in $\Gamma(G,X \sqcup \Omega \sqcup \calk)$ with $(q_1)_-, (q_1)_+ \in g L g^{-1}$ and any vertex $u_1$ of $q_1$, $u_1\in gLg^{-1}\cdot U_g$ holds whenever $u_1 \notin g L g^{-1}$. 
 
Without loss of generality, we assume that the elements of $U_g$ lie in mutually distinct $gLg^{-1}$-orbits and $gLg^{-1} \cap U_g =\emptyset$. 
Since $gLg^{-1}$ satisfies Condition ($b$) with respect to $\{K_{\lambda,\mu}\}$ in $G$, for any $u \in U_g$, the set $\scrk(g L g^{-1}, 1, u)=\{\, K_{\lambda,\mu} \mid \lambda \in \Lambda, \mu \in M_{\lambda}, 1 \cdot K_{\lambda,\mu} \cap g L g^{-1} \cdot u \ne \emptyset \,\}$ is a finite subset of $\{K_{\lambda,\mu}\}$. 
It then follows that $\scrk_g = \bigcup_{u \in U_g} \scrk(g L g^{-1}, 1, u)$ is a finite subset of $\{K_{\lambda,\mu}\}$. 

Put $\Lambda_4=\{ \lambda \in \Lambda \mid \text{ there exists } \mu \in M_\lambda \text{ such that } K_{\lambda,\mu} \in \scrk_g \}$ and $\Lambda_g=\Lambda_3 \cup \Lambda_4$. 
We take an arbitrary $\lambda \in \Lambda \setminus \Lambda_g$. 
We note that $\Omega_\lambda= \emptyset$. 
For any $l \in L$ such that $g l g^{-1} \in H_\lambda$, let $q$ be a geodesic in $\Gamma(H_\lambda, \Omega_\lambda \sqcup \calk_\lambda)$ from $1$ to $g l g^{-1}$. 
This is also a geodesic in $\Gamma(G, X \sqcup \Omega \sqcup \calk)$ and the label of every edge of $q$ is in $\calk_{\lambda}$. 
We denote by $1=v_1, v_2, \ldots , v_n=g l g^{-1}$ all the vertices of $q$ placed on $q$ in this order. 
Note that for any $i=1,2,\ldots,n$, the vertex $v_i$ is contained in $H_\lambda$. 
If $v_2$ was not in $g L g^{-1}$, then there would be an element $u$ of $U_g$ such that $v_2 u^{-1} \in g L g^{-1}$.
This is a contradiction because of $\lambda\notin \Lambda_4$. 
It follows that $v_2\in gLg^{-1}$. 
By inductive argument, we obtain that $v_1,v_2,\ldots,v_n\in gLg^{-1}$. 
For the label $\phi(e)$ of each edge $e$ of $q$, there is thus $\mu \in M_\lambda$ such that $\phi(e)\in gLg^{-1}\cap K_{\lambda,\mu}$. 
By \cite[Corollary just after Proposition 3 in Chapter I,  \S 1 (p.6)]{Serre}, we obtain that $gLg^{-1}\cap H_\lambda=\ast_{\mu\in M_\lambda}(gLg^{-1}\cap K_{\lambda,\mu})$. 
\end{proof}

(iii) $\Rightarrow$ (ii)
Since for any $\lambda\in\Lambda$ and any $g\in G$, the group $gLg^{-1}\cap H_\lambda$ is quasiconvex relative to $\{K_{\lambda,\mu}\}$ in $G$ by Proposition \ref{nest}, it therefore satisfies Condition ($b$) with respect to $\{K_{\lambda,\mu}\}$ and there is a finite subset $Z_{\lambda,g}$ of $G$ satisfying the following: 
For any geodesic $q_1$ in $\Gamma(G,X\sqcup\Omega\sqcup\calk)$ with $(q_1)_-,(q_1)_+\in gLg^{-1}\cap H_\lambda$ and any vertex $u_1$ of $q_1$, the inequality $d_{Z_{\lambda,g}}(u_1,gLg^{-1}\cap H_\lambda)\leq 1$ holds. 
Put $\Lambda_5=\Lambda_g\cup \Lambda_X\cup \{\,\lambda\in\Lambda \mid \text{there exists }\mu \in M_\lambda \text{ such that } (\lambda,\mu)\in M_X\,\}$. 
This is a finite subset of $\Lambda$. 

Note that if $\lambda\in \Lambda\setminus \Lambda_5$, then $\Omega_\lambda=\emptyset$, and for any $\lambda\in \Lambda\setminus \Lambda_5$, any geodesic in $\Gamma(H_\lambda,\Omega_\lambda\sqcup\calk_\lambda)$ is also a geodesic in $\Gamma(G,X\sqcup\Omega\sqcup\calk)$ by Lemma \ref{a1}. 

\begin{Claim}\label{claim5}
Take arbitrary $\lambda\in\Lambda\setminus \Lambda_5$ and $l\in L\setminus \{1\}$ with $glg^{-1}\in H_\lambda$. 
Let $q$ be a geodesic in $\Gamma(G,X\sqcup\Omega\sqcup\calk)$ from $1$ to $glg^{-1}$. 
Then any vertex of $q$ is contained in $gLg^{-1}\cap H_\lambda$. 
\end{Claim}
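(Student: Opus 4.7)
The plan is to prove the claim in two stages: first, show that every edge of $q$ is labelled by an element of $\calk_\lambda$ (so $q$ lies entirely in $H_\lambda$); then, identify the vertices of $q$ with initial partial products of the free-product normal form of $glg^{-1}$ and invoke the decomposition $gLg^{-1}\cap H_\lambda=\ast_\mu(gLg^{-1}\cap K_{\lambda,\mu})$ supplied by Claim \ref{claim4}. The main obstacle is the first stage: the Cayley graph $\Gamma(G,X\sqcup\Omega\sqcup\calk)$ has many more edges than $\Gamma(H_\lambda,\calk_\lambda)$, so one must rule out geodesic ``detours'' through other factors of $G$.

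For the first stage, I would apply Lemma \ref{a1} with both relative generating sets taken to be $X$, obtaining a finite set $\Lambda_1\subseteq\Lambda_5$ such that $G=A\ast\bigl(\ast_{\lambda'\notin\Lambda_1}H_{\lambda'}\bigr)$ with $A=\langle X\sqcup\bigl(\bigsqcup_{\lambda'\in\Lambda_1}\calh_{\lambda'}\bigr)\rangle$, and $H_{\lambda'}=\ast_\mu K_{\lambda',\mu}$ for each $\lambda'\notin\Lambda_1$. Since $\lambda\notin\Lambda_1$, the subgroup $H_\lambda$ is a free factor of $G$, and each letter of $X\sqcup\Omega\sqcup\calk$ lies in a single factor of this decomposition; crucially, the only such letters lying in $H_\lambda$ are those of $\calk_\lambda$, because $X\subseteq A$ and $\Omega_\lambda=\emptyset$. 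Writing the edge labels of $q$ as $s_1,\ldots,s_n$, an induction on $n$ using normal-form moves in the free product (combining adjacent same-factor letters and deleting trivial syllables) shows that the ordered subsequence of $s_i$ lying in $\calk_\lambda$ multiplies to $glg^{-1}$ in $H_\lambda$. Hence if $N$ denotes the number of such letters, there is a path of length $N$ in $\Gamma(H_\lambda,\calk_\lambda)$ from $1$ to $glg^{-1}$, so $d_{\calk_\lambda}(1,glg^{-1})\leq N\leq n$. The remark preceding the claim gives $n=d_{X\sqcup\Omega\sqcup\calk}(1,glg^{-1})=d_{\calk_\lambda}(1,glg^{-1})$, forcing $N=n$ and hence every edge of $q$ is labelled in $\calk_\lambda$.

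For the second stage, view $q$ as a geodesic of length $n$ in $\Gamma(H_\lambda,\calk_\lambda)$. Because $H_\lambda=\ast_\mu K_{\lambda,\mu}$ with generating set $\calk_\lambda=\bigsqcup_\mu(K_{\lambda,\mu}\setminus\{1\})$, the word-length of $glg^{-1}$ equals its syllable length in this free product, and uniqueness of normal form identifies the edge labels of $q$ with the syllables $u_1,\ldots,u_n$ of the normal form of $glg^{-1}$. Claim \ref{claim4} applies since $\lambda\notin\Lambda_g$, giving $gLg^{-1}\cap H_\lambda=\ast_\mu(gLg^{-1}\cap K_{\lambda,\mu})$, and uniqueness of normal form inside this subgroup shows each $u_i\in gLg^{-1}\cap K_{\lambda,\mu_i}\subseteq gLg^{-1}$. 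Therefore each vertex $u_1u_2\cdots u_j$ of $q$ is a product of elements of $gLg^{-1}$, and by the first stage also lies in $H_\lambda$, proving the claim.
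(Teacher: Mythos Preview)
Your argument is correct and follows essentially the same route as the paper's own proof, though you spell out considerably more detail. The paper simply writes $glg^{-1}=k_1k_2\cdots k_n$ with $k_i\in gLg^{-1}\cap K_{\lambda,\mu_i}$ (using the free product structure of $G$ from Lemma~\ref{a1} and the decomposition $gLg^{-1}\cap H_\lambda=\ast_\mu(gLg^{-1}\cap K_{\lambda,\mu})$), and then asserts that every vertex of $q$ lies in $gLg^{-1}\cap H_\lambda$; your Stage~1 makes explicit the implicit step that forces the arbitrary geodesic $q$ to coincide with this normal-form path. One small remark: the decomposition $gLg^{-1}\cap H_\lambda=\ast_\mu(gLg^{-1}\cap K_{\lambda,\mu})$ is available here directly from hypothesis (iii) (since $\lambda\notin\Lambda_g\subset\Lambda_5$), not from Claim~\ref{claim4}, which belongs to the (i)$\Rightarrow$(iii) direction.
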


\begin{proof}
Let us denote by $\langle\, X\sqcup \left(\bigsqcup_{\lambda\in\Lambda_5}\calh_\lambda\right) \,\rangle$ the subgroup of $G$ generated by $X\sqcup \left( \bigsqcup_{\lambda\in \Lambda_5} \calh_\lambda \right)$. 
By Lemma \ref{a1}, since $G=\langle\, X\sqcup \left(\bigsqcup_{\lambda\in\Lambda_5}\calh_\lambda\right) \,\rangle\ast (\ast_{\lambda\notin \Lambda_5}H_\lambda)$ and for any $\lambda\in\Lambda\setminus \Lambda_5$, the equality $gLg^{-1}\cap H_\lambda=\ast_{\mu\in M_\lambda}(gLg^{-1}\cap K_{\lambda,\mu})$ holds, we put $glg^{-1}=k_1k_2\cdots k_n$ in $G$, where for each $i=1,2,\ldots,n$, the index $\mu_i\in M_\lambda$ and $k_i\in gLg^{-1}\cap K_{\lambda,\mu_i} \subset gLg^{-1}\cap H_\lambda$. 
Every vertex of $q$ is thus an element of $gLg^{-1}\cap H_\lambda$. 
\end{proof}

By Claim \ref{claim5}, we set $Z_g=\bigcup_{\lambda\in\Lambda_5}Z_{\lambda,g}$. 
For any $g\in G$, the set $Z_g$ then satisfies the following:
Take any $\lambda\in\Lambda$ and any geodesic $q$ in $\Gamma(G,X\sqcup \Omega \sqcup \calk)$ whose origin and terminus lie in $gLg^{-1} \cap H_\lambda$. 
For any vertex $u$ of $q$, there is a vertex $v$ of $gLg^{-1}\cap H_\lambda$ with $d_{Z_g}(u,v)\leq 1$. 

(ii) $\Rightarrow$ (i)
We prove that $L$ is pre-quasiconvex relative to $\{K_{\lambda,\mu}\}$ in $G$ and satisfies Condition ($b$) with respect to $\{K_{\lambda,\mu}\}$. 

\begin{Claim}\label{claim6}
The group $L$ is pre-quasiconvex relative to $\{K_{\lambda,\mu}\}$ in $G$. 
\end{Claim}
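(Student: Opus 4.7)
The plan is to combine the minimal-lift machinery of Lemma \ref{minimallift} with fellow-travelling in $\Gamma(G,X\sqcup\Omega\sqcup\calk)$ and the two quasiconvexity hypotheses in (ii). By Remark \ref{rem-qc} together with Lemma \ref{2.27'}, it suffices to verify pre-quasiconvexity with respect to the relative generating set $X\sqcup\Omega$: given $l\in L\setminus\{1\}$, I must exhibit a uniform finite $Y'\subset G$ such that every vertex of any geodesic from $1$ to $l$ in $\Gamma(G,X\sqcup\Omega\sqcup\calk)$ lies within $d_{Y'}$-distance $1$ of $L$.

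For $l\in L\setminus\{1\}$, let $p$ be a geodesic from $1$ to $l$ in $\Gamma(G,X\sqcup\Omega\sqcup\calk)$, let $q$ be a geodesic from $1$ to $l$ in $\Gamma(G,X\sqcup\calh)$, and form the minimal lift $\widehat{q}$. By Lemma \ref{minimallift}, $\widehat{q}$ is a locally minimal $(C,D)$-quasigeodesic without backtracking in $\Gamma(G,X\sqcup\Omega\sqcup\calk)$, so \cite[Theorem 2.14]{MP08} supplies a finite $T\subset G$ with every vertex of $p$ within $d_T$-distance $1$ of a vertex of $\widehat{q}$; it is therefore enough to dominate the vertices of $\widehat{q}$ by $L$. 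These vertices split into type (A) --- vertices already lying on $q$ --- and type (B) --- interior vertices of the minimal lifts of the $H_\lambda$-components of $q$. Since $L$ is quasiconvex relative to $\{H_\lambda\}$ by hypothesis (ii), a finite $Y\subset G$ writes each type (A) vertex $v$ as $v=l_v y_v$ with $l_v\in L$ and $y_v\in Y\cup\{1\}$, handling type (A).

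The heart of the argument is type (B). Let $u$ lie on the lift from $v_1=l_1 y_1$ to $v_2=l_2 y_2$ (two consecutive vertices of $q$ with $v_1^{-1}v_2\in H_\lambda$). Translating by $v_1^{-1}$ realizes this lift as a geodesic in $\Gamma(H_\lambda,\Omega_\lambda\sqcup\calk_\lambda)$ from $1$ to $h:=v_1^{-1}v_2\in H_\lambda$, with $u$ translating to a prefix $h'$ of $h$. The algebraic identity $l_1^{-1}l_2=y_1 h y_2^{-1}\in L$ shows that the endpoints of the translated lift are tied to $L$ through the finite offsets $y_1,y_2$. The crucial observation is that since $y_1,y_2$ range over the finite set $Y\cup\{1\}$, only finitely many elements $g$ (built from $y_1,y_2$) enter the analysis; applying the uniform quasiconvexity of $\{gLg^{-1}\cap H_\lambda\}_{\lambda\in\Lambda}$ from hypothesis (ii) for each such $g$ yields a single finite $Z\subset G$ that simultaneously bounds the quasiconvexity of all the relevant conjugate subgroups. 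Transferring this into $\Gamma(H_\lambda,\Omega_\lambda\sqcup\calk_\lambda)$ via the family analogue of Proposition \ref{nest} places $h'$ within uniformly bounded $d_Z$-distance of a coset of a suitable conjugate subgroup; translating back and absorbing the bounded twists by $y_1,y_2$ into an enlarged finite set produces the desired $Y'\subset G$ with $d_{Y'}(u,L)\le 1$.

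The principal obstacle is the asymmetric case $y_1\ne y_2$, where the endpoints $1$ and $h$ of the translated lift do not simultaneously lie in a single conjugate subgroup $y^{-1}Ly\cap H_\lambda$. The resolution exploits two features in tandem: the finiteness of the set of pair-types $(y_1,y_2)\in(Y\cup\{1\})^2$ restricts attention to finitely many conjugate subgroups and their relevant translates, and the uniform character of the quasiconvexity hypothesis supplies a single finite $Z$ controlling all of them, so that the residual offset between $y_1$ and $y_2$ is absorbed into a bounded correction in $Y'$.
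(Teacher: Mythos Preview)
Your overall architecture --- take a geodesic $q$ in $\Gamma(G,X\sqcup\calh)$, lift it via Lemma \ref{minimallift}, fellow-travel against a geodesic $p$ in $\Gamma(G,X\sqcup\Omega\sqcup\calk)$ using \cite[Theorem 2.14]{MP08}, and then split the vertices of $\widehat q$ into type (A) and type (B) --- matches the paper's proof, and your treatment of type (A) is correct.

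The gap is in type (B) when $y_1\ne y_2$. You correctly identify this as the obstacle, but the sentence ``the residual offset between $y_1$ and $y_2$ is absorbed into a bounded correction in $Y'$'' is precisely the step that requires proof, and uniform quasiconvexity alone does not supply it. The hypothesis gives you a finite $Z$ such that any geodesic in $\Gamma(G,X\sqcup\Omega\sqcup\calk)$ with \emph{both} endpoints in a single $L\cap y H_\lambda y^{-1}$ stays $Z$-close to that subgroup. But in your situation the translated lift runs from $1$ to $h$ with $y_1 h y_2^{-1}\in L$; neither endpoint need lie in $y_1^{-1}Ly_1\cap H_\lambda$ or in $y_2^{-1}Ly_2\cap H_\lambda$, so you cannot invoke quasiconvexity directly. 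The ``offset'' $y_1^{-1}y_2$ is bounded in $G$, not in the $X\sqcup\Omega\sqcup\calk$-metric, and there is no a priori reason why $h$ should be close in $\Gamma(G,X\sqcup\Omega\sqcup\calk)$ to any element of $y_1^{-1}Ly_1\cap H_\lambda$.

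The paper fills this gap with two ingredients you omit. First, Condition~($b$) for $L$ relative to $\{H_\lambda\}$ (part of the quasiconvexity hypothesis) shows that for each pair $(y_1,y_2)$ with $y_1\ne y_2$ only finitely many $\lambda$ occur, so one is reduced to a finite list of triples $(y_1,y_2,\lambda)$. Second, for each such triple the paper invokes \cite[Lemma 4.22]{M-O-Y1}, which produces a finite set $W(y_1,y_2,\lambda)\subset G$ with $L\cap N_{\{y_2\},1}(y_1 H_\lambda)\subset N_{W(y_1,y_2,\lambda),1}(L\cap y_1 H_\lambda y_1^{-1})$. This yields an element $l_2\in L\cap y_1 H_\lambda y_1^{-1}$ within $W$-distance $1$ of $l_1^{-1}l$, so that after adjoining at most three edges (labeled by $y_1,y_2^{-1},w^{-1}$) one obtains a path whose endpoints \emph{do} lie in the single subgroup $L\cap y_1 H_\lambda y_1^{-1}$, to which uniform quasiconvexity now applies. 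Your ``family analogue of Proposition \ref{nest}'' does not exist in the paper and would not bypass this issue; the work is done in $\Gamma(G,X\sqcup\Omega\sqcup\calk)$ (enlarged by the finite sets $A,W$), not in the individual $\Gamma(H_\lambda,\Omega_\lambda\sqcup\calk_\lambda)$.
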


\begin{proof}
Since $L$ is pre-quasiconvex relative to $\{H_\lambda\}$ in $G$, there is a finite subset $A$ of $G$ satisfying the following: 
For any geodesic $p_1$ in $\Gamma(G,X\sqcup\calh)$ with $(p_1)_-,(p_1)_+ \in L$ and any vertex $u_1$ of $p_1$, the inequality $d_{A}(u_1,L)\leq 1$ holds. 
Without loss of generality, the set $A$ is assumed to satisfy that if $a_1\ne a_2$, then $La_1\cap La_2=\emptyset$ and not to contain any elements of $L$. 

For any $g\in G$, the family $\{g(g^{-1} L g \cap H_\lambda) g^{-1}\}_{\lambda\in \Lambda}=\{L\cap g H_\lambda g^{-1}\}_{\lambda\in\Lambda}$ of subgroups of $G$ is also uniformly quasiconvex relative to $\{K_{\lambda,\mu}\}$ in $G$ by the proof of \cite[Lemma 4.16 and Theorem 1.4 (i)]{M-O-Y1}. 
For any $g \in G$, there is thus a finite subset $Z_g$ of $G$ satisfying the following:
For any $\lambda\in\Lambda$, any geodesic $q_1$ in $\Gamma(G,X\sqcup\Omega\sqcup \calk)$ with $(q_1)_-,(q_1)_+ \in L\cap g H_\lambda g^{-1}$ and any vertex $u_1$ of $q_1$, the inequality $d_{Z_{g}}(u_1, L)\leq d_{Z_{g}}(u_1, L\cap gH_\lambda g^{-1})\leq 1$ holds. 
We set $Z_A=\bigcup_{a\in A\cup \{1\}} Z_{a}$. 
This is a finite subset of $G$. 

Take 
$$
\Lambda_6=\{\, \lambda\in\Lambda \mid \text{there exist }a_1,a_2 \in A \cup \{1\} \text{ and } h\in H_\lambda \setminus \{1\} \text{ such that }a_1 h \in L a_2  \,\}.
$$  
For any $\lambda\in\Lambda_6$, there are $a_1,a_2 \in A \cup \{1\}$, $h\in H_\lambda\setminus \{1\}$ and $l_1\in L$ such that $a_1 h=l_1a_2$. 

If $a_1=a_2$, then $l_1\in L\cap a_1 H_\lambda a_1^{-1}$ because of $a_1 h a_2^{-1}=a_1h a_1^{-1}=l_1$. 

If $a_1\ne a_2$, then there exists a finite subset $W(a_1,a_2,\lambda)$ of $G$ such that 
$$
L\cap N_{\{a_2\},1}(a_1 H_\lambda)\subset N_{W(a_1,a_2,\lambda),1}(L\cap a_1 H_\lambda a_1^{-1})
$$ 
by \cite[Lemma 4.22]{M-O-Y1}. 
We hence obtain that $l_2\in L\cap a_1 H_\lambda a_1^{-1}$ and $w \in W(a_1,a_2,\lambda)\cup \{1\}$ such that $l_2 w=l_1$ or $l_2w^{-1}=l_1$. 
Without loss of generality, we assume that $l_2 w=l_1$ (see Figure \ref{fig11}). 

\begin{figure}[top]
\begin{center}
\includegraphics[height=7.5cm]{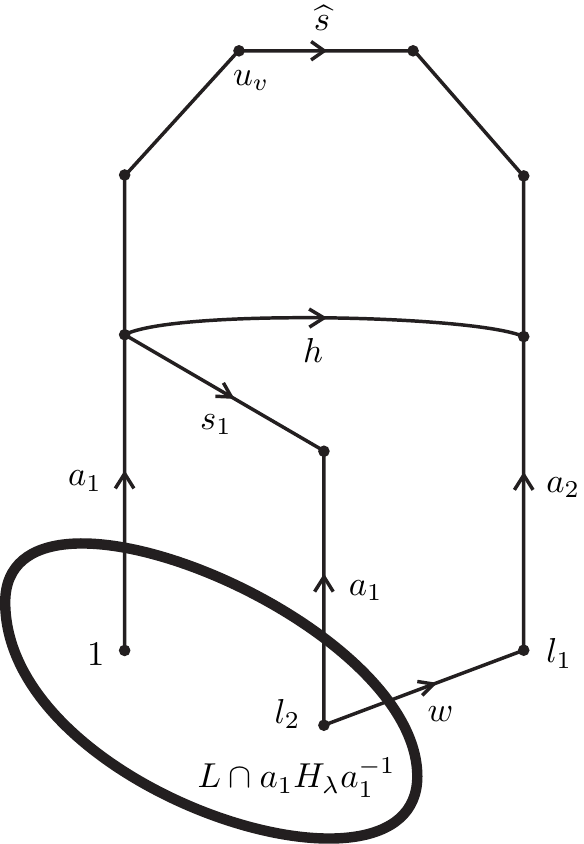}
\end{center}
\caption{An element $l_2$ of $L\cap a_1 H_\lambda a_1^{-1}$}
\label{fig11}
\end{figure}

Set 
$$
\Lambda_7=\{\, \lambda\in\Lambda \mid H_\lambda\in \bigcup_{\stackrel{a_1, a_2 \in A \cup \{1\}}{a_1 \ne a_2}}\scrh(L,a_1,a_2)\,\},
$$ 
where $\scrh(L,a_1,a_2)=\{\, H_\lambda \mid \lambda\in \Lambda, a_1 H_\lambda \cap L a_2 \ne \emptyset\,\}$. 
The set $\Lambda_7$ is finite because $L$ satisfies Condition ($b$) with respect to $\{H_\lambda\}$. 
We take a finite set 
$$
W=\bigcup_{\lambda\in \Lambda_7}\bigcup_{\stackrel{a_1,a_2\in A\cup \{1\}}{a_1\ne a_2}}W(a_1,a_2,\lambda).
$$ 

Let us denote by $r$ the path in $\Gamma(G,X\sqcup (A \cup A^{-1})\sqcup (W \cup W^{-1})\sqcup \calh)$ labeled by
\begin{itemize}
\item $h$ when $a_{1}=a_{2}=1$; 
\item $ha_{2}^{-1}$ (resp.\ $a_{1} h$) when $a_{1}=1$, $a_{2}\ne 1$ (resp.\ when $a_{1} \ne 1$, $a_{2}=1$) and $w=1$;
\item $a_{1}h a_{2}^{-1}$ when $a_{1}=a_{2} \ne 1$, or when $a_{1} \ne a_{2}$, $a_{1}\ne 1$, $a_{2} \ne 1$ and $w=1$;
\item $ha_{2}^{-1}w^{-1}$ (resp.\ $a_{1} h w^{-1}$) when $a_{1}=1$, $a_{2}\ne 1$ (resp.\ when $a_{1}\ne 1$, $a_{2}=1$) and $w \ne 1$; or
\item $a_{1}h a_{2}^{-1}w^{-1}$ otherwise.
\end{itemize} 
The edge of $r$ labeled by $h$ is denoted by $s$. 

Let us denote by $\widehat r$ the path in $\Gamma(G,X\sqcup \Omega\sqcup (A \cup A^{-1})\sqcup (W \cup W^{-1}) \sqcup \calk)$ obtained from $r$ by replaced $s$ with a minimal lift $\widehat{s}$ of $s$. 
We note that $r_{-}, r_{+}, {\widehat r}_-, {\widehat r}_+ \in L\cap a_1H_\lambda a_1^{-1}$. 

Since the natural embedding from $\Gamma(G,X\sqcup\Omega\sqcup\calk)$ into $\Gamma(G,X\sqcup\Omega\sqcup (A \cup A^{-1})\sqcup (W \cup W^{-1}) \sqcup\calk)$ is a quasi-isometry, there are constants $\alpha\geq 1$ and $\beta\geq 0$ satisfying the following: 
For any geodesic $p_1$ in $\Gamma(G,X\sqcup\calh)$, a minimal lift $\widehat{p_1}$ is a locally minimal $(\alpha,\beta)$-quasigeodesic without backtracking in $\Gamma(G,X\sqcup\Omega\sqcup (A \cup A^{-1})\sqcup (W \cup W^{-1}) \sqcup \calk)$ by Lemma \ref{minimallift} and any geodesic in $\Gamma(G,X\sqcup\Omega\sqcup\calk)$ is a locally minimal $(\alpha,\beta)$-quasigeodesic without backtracking in $\Gamma(G,X\sqcup\Omega\sqcup (A \cup A^{-1})\sqcup (W \cup W^{-1}) \sqcup \calk)$. 

The path $\widehat r$ is thereby a locally minimal $(\alpha,3\alpha+\beta+3)$-quasigeodesic without backtracking in $\Gamma(G,X\sqcup\Omega\sqcup (A \cup A^{-1})\sqcup (W \cup W^{-1}) \sqcup \calk)$. 

By \cite[Theorem 2.14]{MP08}, there is a finite subset $T_2$ of $G$ satisfying the following: 
For any locally minimal $(\alpha,3\alpha+\beta+3)$-quasigeodesics $q_1$ and $q_2$ without backtracking in $\Gamma(G,X\sqcup\Omega\sqcup (A \cup A^{-1})\sqcup (W \cup W^{-1})\sqcup\calk)$ with $(q_1)_-=(q_2)_-$ and $(q_1)_+=(q_2)_+$, and any vertex $u_1$ of $q_1$ (resp.\ $v_2$ of $q_2$), there exists a vertex $v_1$ of $q_2$ (resp.\ $u_2$ of $q_1$) with $d_{T_2}(u_1,v_1)\leq 1$ (resp.\ $d_{T_2}(v_2,u_2)\leq 1$). 

Let $r_0$ be a geodesic in $\Gamma(G,X\sqcup\Omega\sqcup\calk)$
\begin{itemize}
\item from $1$ to $l_1$ when $a_1=a_2$, or when $a_1\ne a_2$ and $w=1$; or 
\item from $1$ to $l_2$ when $a_1\ne a_2$ and $w \ne 1$.
\end{itemize}
Note that $(r_0)_-, (r_0)_+\in L\cap a_1 H_\lambda a_1^{-1}$. 

For any $\lambda \in \Lambda_6$ and any vertex $v_1$ of $\widehat r$, there exists a vertex $w_{v_1}$ of $r_0$ such that $d_{T_2\cup Z_A}(v_1,L)\leq d_{T_2}(v_1,w_{v_1})+d_{Z_A}(w_{v_1},L)\leq 2$. 

Take any $l\in L\setminus \{1\}$ and any geodesic $q$ in $\Gamma(G,X\sqcup \Omega\sqcup \calk)$ from $1$ to $l$. 
Let $p$ be a geodesic in $\Gamma(G,X\sqcup\calh)$ from $1$ to $l$ and $\widehat p$ be a minimal lift of $p$. 

We take an arbitrary vertex $v$ of $q$. 
There exists a vertex $u_v$ of $\widehat p$ in $\Gamma(G,X\sqcup\Omega\sqcup (A \cup A^{-1})\sqcup (W \cup W^{-1})\sqcup\calk)$ such that $d_{T_2}(v,u_v)\leq 1$. 
Since the set of vertices of $\widehat{p}$ contains every vertex of $p$, we consider the two cases where $u_v$ is a vertex of $p$, and $u_v$ is a vertex of $\widehat{p}$ but not a vertex of $p$. 

When $u_v$ is a vertex of $p$, the following inequality holds; 
$$
d_{A\cup T_2}(v,L)\leq d_{T_2}(v,u_v)+d_{A}(u_v,L)\leq 2. 
$$

\begin{figure}[top]
\begin{center}
\includegraphics[height=7.5cm]{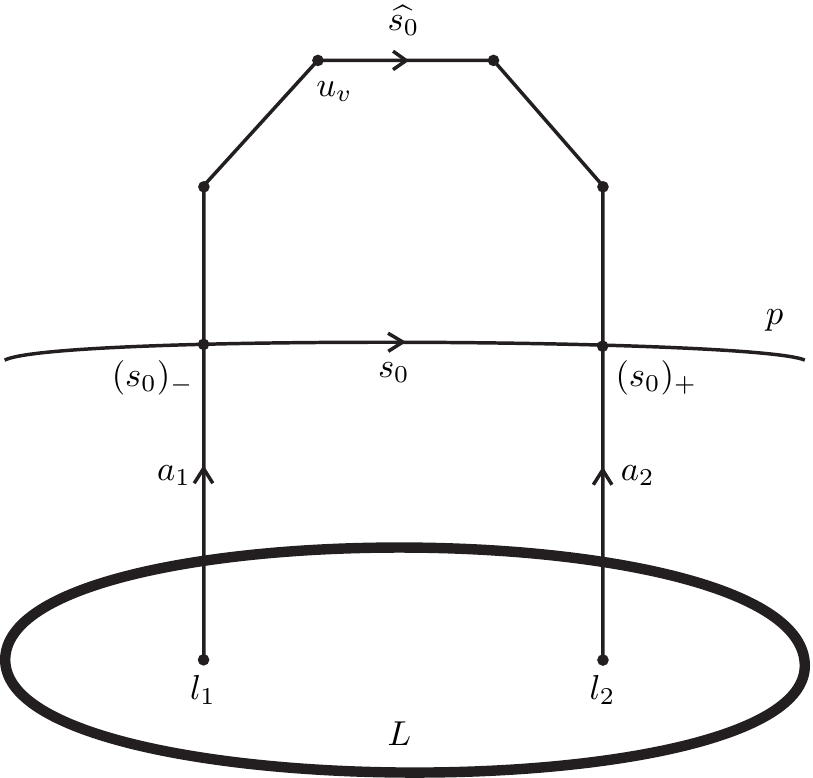}
\end{center}
\caption{An $H_{\lambda_0}$-component $s_0$ of $p$}
\label{fig9}
\end{figure}

We assume that $u_v$ is a vertex of $\widehat{p}$ but not a vertex of $p$.
There is $\lambda_0\in \Lambda$ such that $u_v$ is contained in $\widehat{s_0}$ of an $H_{\lambda_0}$-component $s_0$ of $p$ with $\widehat{s_0} \subset \widehat p$. 
Because $(s_0)_-$ and $(s_0)_+$ are vertices of $p$, we obtain that $a_1,a_2\in A \cup \{1\}$ and $l_1,l_2\in L$ such that $(s_0)_-=l_1a_1$ or $(s_0)_-=l_1a_1^{-1}$, and $(s_0)_+=l_2a_2$ or $(s_0)_+=l_2a_2^{-1}$ (see Figure \ref{fig9}). 
Without loss of generality, we assume that $(s_0)_-=l_1a_1$ and $(s_0)_+=l_2a_2$. 
It then follows that $\lambda_0\in\Lambda_6$. 
We hence obtain that there is a vertex $w_v$ of a geodesic in $\Gamma(G,X\sqcup \Omega\sqcup\calk)$ from $l_1$ to $l_2$ such that $d_{T_2\cup Z_A}(v,L)\leq d_{T_2}(v,u_v)+d_{T_2}(u_v,w_v)+d_{Z_A}(w_v,L)\leq 3$. 
\end{proof}

\begin{Claim}\label{claim7}
The group $L$ satisfies Condition ($b$) with respect to $\{K_{\lambda,\mu}\}$. 
\end{Claim}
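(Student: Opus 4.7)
The plan is to reduce Condition ($b$) for $L$ with respect to $\{K_{\lambda,\mu}\}$ to Condition ($b$) for the intersection subgroups $L'_\lambda := y_1^{-1}Ly_1 \cap H_\lambda$ with respect to $\{K_{\lambda,\mu}\}$, which is exactly what the uniform quasiconvexity hypothesis supplies (with $g=y_1^{-1}$, combined with the first bullet in Definition \ref{urqc}).

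Fix $y_1, y_2 \in G$ with $Ly_1 \cap Ly_2 = \emptyset$. First I would cut down to finitely many $\lambda$. If $K_{\lambda,\mu} \in \scrk(L,y_1,y_2)$, then $y_1K_{\lambda,\mu} \cap Ly_2 \subseteq y_1H_\lambda \cap Ly_2$ is nonempty, so $H_\lambda \in \scrh(L,y_1,y_2)$. Because $L$ is quasiconvex relative to $\{H_\lambda\}$ in $G$, it satisfies Condition ($b$) with respect to $\{H_\lambda\}$, hence $\scrh(L,y_1,y_2)$ is finite. Thus only finitely many $\lambda \in \Lambda$ can contribute, and it suffices to bound the $\mu$'s for each such $\lambda$.

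Next, fix such a $\lambda$ and choose once and for all $h_0 \in H_\lambda$, $l_0 \in L$ with $y_1 h_0 = l_0 y_2$. For any $\mu$ with $K_{\lambda,\mu} \in \scrk(L,y_1,y_2)$, pick $k \in K_{\lambda,\mu}$ and $l \in L$ satisfying $y_1 k = l y_2$. A direct computation gives
\[
k h_0^{-1} = y_1^{-1}(l l_0^{-1}) y_1 \in y_1^{-1}L y_1 \cap H_\lambda = L'_\lambda,
\]
so $k \in L'_\lambda h_0 \cap K_{\lambda,\mu}$. Moreover $h_0 \notin L'_\lambda$: otherwise $h_0 = y_1^{-1}l' y_1$ for some $l' \in L$, and combined with $y_1 h_0 = l_0 y_2$ this forces $y_2 y_1^{-1} \in L$, contradicting $Ly_1 \cap Ly_2 = \emptyset$. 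Hence $L'_\lambda \cdot 1 \cap L'_\lambda h_0 = \emptyset$ and $K_{\lambda,\mu} \in \scrk(L'_\lambda, 1, h_0)$.

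Finally, by the hypothesis applied with $g = y_1^{-1}$, the family $\{y_1^{-1}Ly_1 \cap H_{\lambda'}\}_{\lambda'\in\Lambda}$ is uniformly quasiconvex relative to $\{K_{\lambda,\mu}\}$; in particular $L'_\lambda$ satisfies Condition ($b$) with respect to $\{K_{\lambda,\mu}\}$, so $\scrk(L'_\lambda,1,h_0)$ is finite. This bounds the number of admissible $\mu$ for the chosen $\lambda$, and the finiteness of $\scrk(L,y_1,y_2)$ follows by taking the union over the finitely many $\lambda$ from the first step. The only delicate point, which is essentially bookkeeping, is the algebraic manipulation that identifies the relevant element as lying in $L'_\lambda h_0$ and the verification that $h_0 \notin L'_\lambda$; once that is in place the argument is a clean two-step reduction.
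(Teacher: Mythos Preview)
Your argument is correct, and it is in fact more elementary than the paper's. Both proofs begin the same way: use Condition~($b$) for $L$ with respect to $\{H_\lambda\}$ to reduce to finitely many indices $\lambda$, then for each such $\lambda$ reduce to Condition~($b$) for the intersection $y_1^{-1}Ly_1\cap H_\lambda$ (or its conjugate), which the uniform quasiconvexity hypothesis provides. The difference lies in how the second reduction is carried out. The paper invokes \cite[Lemma~4.22]{M-O-Y1} to produce a finite ``correction set'' $W_{\lambda_0}\subset G$ with $L\cap N_{\{y_2\},1}(y_1H_{\lambda_0})\subset N_{W_{\lambda_0},1}(L\cap y_1H_{\lambda_0}y_1^{-1})$, then shows that every $K_{\lambda_0,\mu}\in\scrk(L,y_1,y_2)$ lands in $\bigcup_{w\in W_{\lambda_0}\cup\{1\}}\scrk(L\cap y_1H_{\lambda_0}y_1^{-1},\,y_1,\,wy_2)$. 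You bypass that lemma entirely: by fixing a single witness $h_0\in H_\lambda$ with $y_1h_0\in Ly_2$, the identity $kh_0^{-1}=y_1^{-1}(ll_0^{-1})y_1$ places every relevant $K_{\lambda,\mu}$ into the single set $\scrk(L'_\lambda,1,h_0)$. Your route is shorter and self-contained; the paper's route, while heavier, is of a piece with its systematic use of \cite[Lemma~4.22]{M-O-Y1} elsewhere in the proof of Theorem~\ref{qc-updown}.
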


\begin{proof}
Let $y_1$ and $y_2$ be elements of $G$ with $Ly_1\cap Ly_2=\emptyset$. 
Set $\scrk(L,y_1,y_2)=\{\, K_{\lambda,\mu} \mid \lambda \in \Lambda, \mu \in M_{\lambda}, y_1K_{\lambda,\mu}\cap Ly_2\ne\emptyset \,\}$. 
We prove that it is a finite set. 

For each $K_{\lambda_0,\mu}\in \scrk(L,y_1,y_2)$, there exist $l\in L$ and $k\in K_{\lambda_0,\mu}$ such that $y_1k=ly_2$. 
Since $L$ satisfies Condition ($b$) with respect to $\{H_\lambda\}$, the set $\scrh(L,y_1,y_2)=\{\,H_{\lambda} \mid \lambda\in \Lambda, y_1H_{\lambda}\cap Ly_2\ne \emptyset\,\}$ is finite. 
We set $\Lambda_8=\{\,\lambda\in \Lambda \mid H_{\lambda}\in\scrh(L,y_1,y_2)\,\}$. 
Because of $K_{\lambda_0,\mu}\subset H_{\lambda_0}$, we obtain that $\lambda_0\in\Lambda_8$. 

We take a minimal finite subset $W_{\lambda_0}$ of $G$ such that $L\cap N_{\{y_2\},1}(y_1H_{\lambda_0})\subset N_{W_{\lambda_0},1}(L\cap y_1H_{\lambda_0}y_1^{-1})$ by \cite[Lemma 4.22]{M-O-Y1}. 
We hence obtain that $l_1\in L\cap y_1H_{\lambda_0}y_1^{-1}$ and $w\in W_{\lambda_0}\cup \{1\}$ with $l=l_1w$ or $l=l_1 w^{-1}$. 
Without loss of generality, we assume that $l=l_1 w$. 
Note that $Ly_1\cap Lwy_2=\emptyset$ because $Ly_1\cap Ly_2=\emptyset$ and $w=l_1^{-1}l\in L$. 
Since $y_1(y_1^{-1}Ly_1\cap H_{\lambda_0})y_1^{-1}=L\cap y_1H_{\lambda_0}y_1^{-1}$ satisfies Condition ($b$) with respect to $\{K_{\lambda,\mu}\}$ by \cite[Lemma 4.16 and Theorem 1.4 (i)]{M-O-Y1} and $l_1\in L\cap y_1H_{\lambda_0}y_1^{-1}$, we obtain that $K_{\lambda_0,\mu}$ is contained in the finite set $\scrk(L\cap y_1H_{\lambda_0}y_1^{-1},y_1,wy_2)$. 
We put $\scrk=\bigcup_{\lambda\in\Lambda_8}\bigcup_{w\in W_{\lambda}\cup \{1\}}\scrk(L\cap y_1H_{\lambda}y_1^{-1},y_1,wy_2)$ and note that $\scrk$ is a finite set. 
The set $\scrk(L,y_1,y_2)$ is finite because $\scrk(L,y_1,y_2)\subset \scrk$. 
\end{proof}

By Claims \ref{claim6} and \ref{claim7}, the group $L$ is quasiconvex relative to $\{K_{\lambda,\mu}\}$ in $G$. 
\end{proof}

\section{Relative hyperbolicity for a group}\label{app}

The aim of this section is to prove Theorem \ref{blowup}. 
We introduce Proposition \ref{rel_Osi_thm1.5} which is used to show that Condition (ii) implies Condition (i) in Theorem \ref{blowup}. 

For a subset $\Lambda_0$ of $\Lambda$, we put $\calh_0=\bigsqcup_{\lambda\in\Lambda_0}\calh_\lambda$ in Proposition \ref{rel_Osi_thm1.5} and Lemma \ref{Osi_lem3.2}. 

\begin{Prop}\label{rel_Osi_thm1.5}
Let $G$ be a group having the finite relative presentation (\ref{representation}) with respect to $\{H_\lambda\}$. 
We suppose that $G$ is hyperbolic relative to $\{H_\lambda\}$. 
Let $Q$ be a subgroup of $G$ and $\Lambda_0$ a subset of $\Lambda$ such that for each $\lambda\in\Lambda_0$, the group $H_\lambda$ is a subgroup of $Q$. 
Then $G$ is hyperbolic relative to $\{H_\lambda\}_{\lambda \in \Lambda \setminus \Lambda_0}\cup \{Q\}$ if the following conditions hold:
\begin{enumerate}
\item[(Q1)] There exists a finite subset $Z$ of $Q$ such that $Q$ is finitely generated by $Z$ relative to $\{H_\lambda\}_{\lambda\in\Lambda_0}$.
\item[(Q2)] There exist constants $C\geq 1$ and $D \geq 0$ such that for each element $q \in Q$, the inequality $|q|_{Z\sqcup\mathcal H_0} \leq C |q|_{X\sqcup \mathcal{H}}+D$ holds.
\item[(Q3)] For each $g \in G\setminus Q$, the group $Q \cap gQg^{-1}$ is finite. 
\end{enumerate} 
\end{Prop}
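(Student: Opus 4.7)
The plan is to construct an explicit finite relative presentation of $G$ with respect to $\{H_\lambda\}_{\lambda\in\Lambda\setminus\Lambda_0}\cup\{Q\}$, and then show that its relative Dehn function is linear. Throughout, write $\calh_1=\bigsqcup_{\lambda\notin\Lambda_0}\calh_\lambda$ and $\mathcal{Q}=Q\setminus\{1\}$, regarded as sets of letters.

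As a relative generating set, I would take $X'=X\sqcup Z$. By (Q1) this is finite, and since $\calh_0\subset\mathcal{Q}$ (because each $H_\lambda\subset Q$ for $\lambda\in\Lambda_0$), it generates $G$ relative to the new family. For the relators, I would include the original finite set $\calr$ of \eqref{representation}, with each $\calh_0$-letter reinterpreted as a $\mathcal{Q}$-letter, together with a finite collection of relators of the form $z\bar z^{-1}$ identifying each $z\in Z\subset X'$ with the corresponding letter $\bar z\in\mathcal{Q}$, plus a fixed word over $X\sqcup\calh$ representing each $z$ in $G$. Using that $Q=\langle Z\cup\calh_0\rangle$, one checks by a diagram chase that these finitely many relators normally generate the kernel of $F(X')\ast\bigl(\ast_{\lambda\notin\Lambda_0}H_\lambda\bigr)\ast Q\twoheadrightarrow G$; denote the resulting finite set of relators by $\calr'$.

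To bound the relative Dehn function, let $W$ be a word over $X'\sqcup\calh_1\sqcup\mathcal{Q}$ with $\|W\|=n$ representing $1$ in $G$. I would first invoke each $z\bar z^{-1}$ relator to eliminate every $Z$-letter in $W$ in favor of its $\mathcal{Q}$-counterpart (cost $\leq n$), and then freely consolidate consecutive $\mathcal{Q}$-letters into syllables $q_1,\ldots,q_k$ with $k\leq n$, this being a free move inside the $Q$-factor. Using (Q1), each $q_i$ may be written as a word $v_i$ over $Z\sqcup\calh_0$ of length $|q_i|_{Z\sqcup\calh_0}$, again a free substitution in $Q$. Expanding any remaining $Z$-letter through its fixed word over $X\sqcup\calh$ (uniformly bounded in length, as $Z$ is finite) produces a word $\widetilde W$ over $X\sqcup\calh$ representing $1$ in $G$. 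The linear Dehn function of $G$ relative to $\{H_\lambda\}$ bounds the relative area of $\widetilde W$ by $B\|\widetilde W\|$, and tracking all relators used, the relative area of $W$ with respect to $\calr'$ is controlled by $O\bigl(n+\|\widetilde W\|\bigr)$.

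The crux is thus the estimate $\|\widetilde W\|=O(n)$, which by (Q2) reduces to the inequality $\sum_{i=1}^k |q_i|_{X\sqcup\calh}=O(n)$. This is where almost malnormality (Q3) is essential: after Step~1, the cycle in $\Gamma(G,X\sqcup\calh_1\sqcup\mathcal{Q})$ corresponding to $W$ carries $\mathcal{Q}$-components $q_1,\ldots,q_k$, and (Q3) forces distinct $\mathcal{Q}$-components to lie in $Q$-cosets whose parabolic connectors in $\Gamma(G,X\sqcup\calh)$ are uniformly controlled, so the $q_i$ behave as isolated components. An analogue of Lemma~\ref{lem_2.27} applied to $Q$-components---this being precisely the content of the forthcoming Lemma~\ref{Osi_lem3.2}---then extracts the required bound from the linear Dehn function of $G$ with respect to $\{H_\lambda\}$. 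The main technical obstacle is making this $\mathcal{Q}$-component isolation rigorous for the typically infinitely generated parabolic $Q$; once it is in hand, assembling the estimates above completes the proof.
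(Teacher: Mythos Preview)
Your overall strategy is essentially that of Osin's proof of \cite[Theorem~1.5]{Osi06a}, which is exactly what the paper follows (with Lemmas~\ref{Osi_lem3.2}, \ref{Osi_lem3.3} and \ref{Osi_lem3.5} replacing Osin's Lemmas~3.2, 3.3 and 3.5). However, there is a genuine gap: the estimate $\sum_i |q_i|_{X\sqcup\calh}=O(n)$ is simply false for arbitrary words $W$, and (Q3) alone does not rescue it. Take $W=q_1\,x\,q_2$ with $x\in X\cap Q$ and $q_1xq_2=1$ in $G$; then $\|W\|=3$ while $|q_1|_{X\sqcup\calh}$ can be arbitrarily large. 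More generally, whenever the subword of $W$ separating two $\mathcal Q$-syllables happens to represent an element of $Q$, those syllables are ``connected'', and (Q3), which only constrains $Q\cap gQg^{-1}$ for $g\notin Q$, says nothing about them.

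What is missing is the reduction to \emph{primitive} words (Lemma~\ref{Osi_lem3.5}): one first shows that a linear bound on $\areaq(W)$ for primitive $W$ implies one for all $W$, by iteratively merging connected $\mathcal Q$-syllables. Only for primitive $W$ are the $\mathcal Q$-components genuinely isolated in the required sense. At that point Lemma~\ref{Osi_lem3.2} enters, but note that it is not an analogue of Lemma~\ref{lem_2.27}; it is the statement that if $a,b\in Q$ satisfy $a=fbg$ with $f,g$ short in $X\sqcup\calh$ and one of $a,b$ long in $Z\sqcup\calh_0$, then $f,g\in Q$. This is what, together with (Q3) and a van Kampen diagram argument over the original presentation, yields the bound on the $|q_i|_{X\sqcup\calh}$ for primitive $W$. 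With the primitive-word reduction inserted and the role of Lemma~\ref{Osi_lem3.2} corrected, the remainder of your outline goes through and coincides with the paper's (i.e.\ Osin's) proof.
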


The proof of this proposition is the same way as in the proof of \cite[Theorem 1.5]{Osi06a} by using Lemmas \ref{Osi_lem3.2}, \ref{Osi_lem3.3} and \ref{Osi_lem3.5} instead of Lemmas 3.2, 3.3 and 3.5 in \cite{Osi06a}. 

Throughout Lemmas \ref{Osi_lem3.2}, \ref{Osi_lem3.3} and \ref{Osi_lem3.5}, $G$ is assumed to be a group having the finite relative presentation (\ref{representation}) with respect to $\{H_\lambda\}$, $Q$ is assumed to be a subgroup of $G$ satisfying Conditions (Q1), (Q2) and (Q3), and without loss of generality, we assume that $Z \subset X$.  

\begin{Lem}\label{Osi_lem3.2}
Let $G$ be a group which is hyperbolic relative to $\{H_\lambda\}$. 
For each constant $\alpha >0$, there exists a constant $A=A(\alpha)>0$ depending on only $\alpha$ and satisfying the following: 
For any $a,b \in Q$ and $f,g \in G$, if they satisfy $\max\{|a|_{Z \sqcup \mathcal{H}_0}, |b|_{Z \sqcup \mathcal{H}_0}\}\geq A$, $\max\{|f|_{X\sqcup \mathcal{H}}, |g|_{X \sqcup \mathcal{H}}\}\leq \alpha$ and $a=fbg$, then $f$ and $g$ are elements of $Q$.  
\end{Lem}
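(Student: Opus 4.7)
The plan is to adapt the proof of \cite[Lemma 3.2]{Osi06a} to our setup. Suppose $a=fbg$ with $a,b\in Q$ and $\max\{|f|_{X\sqcup \calh},|g|_{X \sqcup \calh}\} \leq \alpha$, and, without loss of generality, $|a|_{Z \sqcup \calh_0}\geq A$, for a constant $A$ to be chosen below. Using (Q1), I would write $a$ and $b$ as minimal words $W_a, W_b$ over $Z \cup \calh_0$, and let $p_a\colon 1\to a$ and $p_b\colon f\to fb$ be the corresponding paths in $\Gamma(G,X\sqcup\calh)$, of lengths $|a|_{Z\sqcup\calh_0}$ and $|b|_{Z\sqcup\calh_0}$. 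Together with geodesics $p_f\colon 1\to f$ and $p_g\colon fb\to a$ in $\Gamma(G, X\sqcup\calh)$ of length at most $\alpha$, these form the cycle $P=p_a\,p_g^{-1}\,p_b^{-1}\,p_f^{-1}$. The minimality of $W_a, W_b$ makes $p_a$ and $p_b$ locally minimal and without backtracking in $\Gamma(G,X\sqcup\calh)$, and by (Q2) they are $(C,D)$-quasigeodesics there. Crucially, every $\calh$-component of $p_a$ or $p_b$ is an $H_\lambda$-component with $\lambda\in\Lambda_0$, because $W_a, W_b$ use $\calh$-letters only from $\calh_0$.

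The key step is a component-matching assertion: for $A$ large enough (depending only on $\alpha$ and the ambient constants $\delta, M, C, D$), some $H_\lambda$-component of $p_a$ (necessarily with $\lambda\in\Lambda_0$) must be connected, in the sense of Subsection 2.2, to some $H_\lambda$-component of $p_b$. The proof combines the linear relative isoperimetric inequality $\arear(\phi(P))\le \delta\,l(P)$ with Lemma~\ref{lem_2.27}: the total $\Omega$-length of isolated $\calh$-components of $P$ is bounded by $M\delta\,l(P)$; at most $2\alpha$ components of $p_a$ can be connected to components of the short sides $p_f$ or $p_g$; and no component of $p_a$ can be connected to another component of $p_a$, since $p_a$ is without backtracking. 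When $|a|_{Z\sqcup\calh_0}$ greatly exceeds these combined bounds, some $\calh_0$-component of $p_a$ has to find its match on $p_b$. The degenerate case where $W_a$ has no $\calh_0$-letter at all is handled separately via a direct fellow-traveling argument in $\Gamma(G,X\sqcup\calh)$ for the long quasigeodesic $p_a$ whose vertices all lie in $Q$, where condition (Q3) is invoked to force $f\in Q$.

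Given a matched pair $e_a\subset p_a$ and $e_b\subset p_b$, both $H_\lambda$-components for some $\lambda\in\Lambda_0$, the definition of ``connected'' yields $(e_a)_-^{-1}(e_b)_-\in H_\lambda\subset Q$. Since $(e_a)_-\in Q$ (a vertex of $p_a$) and $(e_b)_-\in fQ$ (a vertex of $p_b$), we deduce $fQ\cap Q\neq\emptyset$, hence $f\in Q$. The conclusion $g=b^{-1}f^{-1}a\in Q$ is then immediate from $a,b,f\in Q$.

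The main obstacle will be making the fellow-traveler / component-matching step precise in this generality. A naive counting via Lemma~\ref{lem_2.27} bounds the number of isolated components linearly in $|a|+|b|$, of the same order as the number of $\calh_0$-letters in $W_a$, so extracting a contradiction purely from $|a|_{Z\sqcup\calh_0}\ge A$ is delicate and seems to require condition (Q3) to exclude pathological configurations in which many vertices of $p_a$ project onto $p_b$ via short detours without yielding an element of $Q$. This is the step where our argument must genuinely mimic the careful diagram analysis of \cite[Lemma 3.2]{Osi06a}.
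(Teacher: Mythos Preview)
Your high-level plan---adapt \cite[Lemma~3.2]{Osi06a}, replacing Osin's finite generating set $Y$ of $Q$ by the finite relative generating set $Z$---is exactly the paper's approach; the paper's proof consists of precisely that one sentence.

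Where your reconstruction goes off track is the component-matching mechanism. In Osin's original setting $Y$ is a finite subset of $X$, so the paths $p_a$ and $p_b$ carry no $\calh$-components at all; component-matching is not part of his argument, and the counting obstacle you correctly identify with Lemma~\ref{lem_2.27} simply does not arise there. Osin's route is the one you only sketch for the ``degenerate case'' and then defer: quasi-geodesic stability in $\Gamma(G,X\sqcup\calh)$ applied to $p_a$ against (a mild repair of) $p_f p_b p_g$ produces, for every vertex $u$ of $p_a$, a companion vertex $v$ with $u^{-1}v$ lying in a \emph{finite} set $T$ depending only on $C,D,\alpha$. If any such $v$ lies on $p_b$ with $u^{-1}v\in Q$, then $v\in Q\cap fQ$ gives $f\in Q$ at once. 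Otherwise, for each $t\in T\setminus Q$ the vertices $u$ matched to $p_b$ via $t$ lie in a single coset of the finite group $Q\cap tQt^{-1}$ (this is where (Q3) enters); since $T$ is fixed, this bounds uniformly the number of vertices of $p_a$ matched to $p_b$, while those matched to $p_f\cup p_g$ number at most $2(\alpha+1)|T|$. Choosing $A$ larger than the resulting bound yields the contradiction. This pigeonhole works regardless of how many $\calh_0$-letters $W_a$ contains, so no case division is needed and the difficulty you flag dissolves.
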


\begin{proof}
Instead of an ordinary finite generating set $Y$ of $Q$ in the proof of \cite[Lemma 3.2]{Osi06a}, we take the finite relative generating set $Z$ of $Q$ with respect to $\{H_\lambda\}_{\lambda\in \Lambda_0}$. 
The lemma is then showed in the same way as in the proof of \cite[Lemma 3.2]{Osi06a}. 
\end{proof}

Let $G$ have the relative presentation (\ref{representation}) with respect to $\{H_\lambda\}$ and let $Q$ be a subgroup of $G$ satisfying Conditions (Q1), (Q2) and (Q3) in Proposition \ref{rel_Osi_thm1.5}. 
We assume that $Z\subset X$ and put $Z_0=X\setminus Z$. 
Take two groups
$F=(*_{\lambda \in \Lambda}H_\lambda)* F(Z)* F(Z_0)$ and $F_Q=(*_{\lambda \in \Lambda \setminus \Lambda_0}H_\lambda)* Q * F(Z_0)$, where $F(Z)$ and $F(Z_0)$ denote free groups generated by $Z$ and $Z_0$, respectively. 
Three canonical homomorphisms are denoted by $\beta \colon F \rightarrow G$, $\varepsilon \colon F \rightarrow F_Q$ and $\gamma \colon F_Q \rightarrow G$ satisfying $\beta=\gamma \circ \varepsilon$. 
We then obtain $\text{Ker}\ \beta$ which is the normal closure of $\calr$ in $F$ and $\text{Ker}\ \gamma$ which is the normal closure of $\varepsilon(\mathcal R)$ in $F_Q$. 
Since both $Z_0$ and $\varepsilon(\mathcal R)$ are finite, the group $G$ is finitely presented relative to $\{H_\lambda\}_{\lambda \in \Lambda \setminus \Lambda_0}\cup \{Q\}$.
To simplify our notation, we put the sets of words $\calh_1=\bigsqcup_{\lambda\in\Lambda\setminus\Lambda_0}\calh_\lambda$ and $\mathcal Q=Z_0\sqcup\mathcal H_1 \sqcup(Q\setminus \{1\})$. 
A relative presentation of $G$ with respect to $\{H_\lambda\}_{\lambda\in \Lambda\setminus \Lambda_0}\cup \{Q\}$ is then 
\begin{equation}\label{repre_Q}
\langle \, Z_0, Q, H_{\lambda}, \lambda \in \Lambda \setminus \Lambda_{0} \mid R=1, R\in \varepsilon(\calr) \,\rangle.\end{equation}
Let $W$ be a word over $\calq$ such that $W$ represents the neutral element of $G$. 
We denote by $\areaq(W)$ the relative area of $W$ with respect to (\ref{repre_Q}). 

By definition, the following lemma holds. 

\begin{Lem}\label{Osi_lem3.3}
Let $G$ be a group which is hyperbolic relative to $\{H_\lambda\}$. 
Let $U$, $V$ and $W$ be words over $\mathcal Q$ and let $T$ be a word over $X\sqcup \mathcal{H}$. 
Assume that $U,V \in \text{Ker}\ \gamma$ and $T \in \text{Ker}\ \beta$. 
Then the following conditions hold: 
\begin{enumerate}
\item[(1)] $\areaq(UV)\leq \areaq(U)+\areaq(V)$.
\item[(2)] $\areaq(W^{-1}VW)=\areaq(V)$.
\item[(3)] $\areaq(\varepsilon(T))\leq \area(T)$.
\end{enumerate}
\end{Lem}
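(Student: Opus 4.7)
The plan is to deduce all three statements directly from the definition of relative area, which counts the minimal number of conjugates of relators (from $\varepsilon(\mathcal R)$ in the free product $F_Q$) needed to express a word in $\mathrm{Ker}\,\gamma$. Each part is essentially a manipulation of such a representation, so no geometry or hyperbolicity is used; the lemma is genuinely ``by definition''.

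For part (1), I would fix optimal representations $U = \prod_{i=1}^{n} f_i^{-1} R_i f_i$ and $V = \prod_{j=1}^{m} g_j^{-1} S_j g_j$ in $F_Q$, with $R_i,S_j \in \varepsilon(\mathcal R)$, where $n = \areaq(U)$ and $m = \areaq(V)$. Concatenating yields a representation of $UV$ as a product of $n+m$ conjugates of elements of $\varepsilon(\mathcal R)$, giving the sub-additivity bound. For part (2), from an optimal representation $V = \prod_{i=1}^{n} f_i^{-1} R_i f_i$ I would conjugate to obtain $W^{-1}VW = \prod_{i=1}^{n} (f_iW)^{-1} R_i (f_iW)$, so $\areaq(W^{-1}VW) \le \areaq(V)$; the reverse inequality follows symmetrically since $V = W(W^{-1}VW)W^{-1}$.

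For part (3), I would take an optimal representation $T = \prod_{i=1}^{n} f_i^{-1} R_i f_i$ in $F$ with $R_i \in \mathcal R$ and $n = \area(T)$, then apply the homomorphism $\varepsilon \colon F \to F_Q$. Since $\varepsilon$ is a homomorphism and sends each $R_i$ to $\varepsilon(R_i) \in \varepsilon(\mathcal R)$, the image $\varepsilon(T) = \prod_{i=1}^{n} \varepsilon(f_i)^{-1} \varepsilon(R_i) \varepsilon(f_i)$ is a product of $n$ conjugates of relators for the presentation (\ref{repre_Q}), yielding $\areaq(\varepsilon(T)) \le n = \area(T)$.

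There is no real obstacle in this argument; the only subtle point to mention is that for (1) and (2) one must check that the representations take place in $F_Q$ (so that $W$ in (2) makes sense as an element of $F_Q$ and conjugation preserves the number of relator factors) and that $\varepsilon(\mathcal R)$ is a symmetric set containing all cyclic shifts of its elements, which is inherited from the corresponding assumption on $\mathcal R$ made at the start of the paper. Once these conventions are noted, the three inequalities are immediate from unpacking definitions.
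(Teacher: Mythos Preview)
Your proposal is correct and is exactly the kind of unpacking the paper intends: the paper's own ``proof'' consists of the single sentence ``By definition, the following lemma holds,'' and your argument supplies precisely the routine manipulations of optimal representations in $F_Q$ (concatenation for (1), conjugation for (2), and application of the homomorphism $\varepsilon$ for (3)) that justify this. The only minor comment is that the remark on $\varepsilon(\mathcal R)$ being symmetric and closed under cyclic shifts, while true and worth noting for consistency with the paper's conventions on presentations, is not actually needed for any of the three inequalities.
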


Let $W_1, W_2, W_3, q_1$ and $q_2$ be words over $\calq$ such that both $q_1$ and $q_2$ are not empty words over $Q\setminus \{1\}$ and $W_2$ represents an element of the subgroup $Q$ in $G$. 
A word $W$ over $\mathcal Q$ is said to be \textit{primitive} if $W$ is not decomposed as 
$$
W= W_1q_1W_2q_2W_3.
$$

The following lemma is proved in the same way as in the proof of \cite[Lemma 3.5]{Osi06a}. 

\begin{Lem}\label{Osi_lem3.5}
Let $G$ be a group which is hyperbolic relative to $\{H_\lambda\}$. 
Let $W$ be any primitive word over $\mathcal Q$ with $W \in \mathrm{Ker}\ \gamma$.
Assume that there exists a constant $\kappa >0$ such that $\areaq(W)\leq \kappa || W ||$. 
Then a relative Dehn function of $G$ with respect to $\{H_\lambda\}_{\lambda \in \Lambda\setminus\Lambda_0}\cup \{Q\}$ is linear. 
\end{Lem}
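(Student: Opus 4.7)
I would argue by strong induction on $\|W\|$, proving $\areaq(W)\leq\kappa\|W\|$ for every $W\in\mathrm{Ker}\,\gamma$. Primitive $W$ are handled directly by the hypothesis, and words of length $\le 1$ in $\mathrm{Ker}\,\gamma$ are forced to be empty (a non-trivial single letter of $\mathcal Q$ does not represent $1$ in $G$), so the base case is immediate.

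For the inductive step, suppose $W$ is not primitive and fix a decomposition $W=W_1q_1W_2q_2W_3$ with $q_1,q_2$ non-empty words over $Q\setminus\{1\}$ and $\overline{W_2}\in Q$. Absorbing all but the innermost letter of each $q_i$ into the adjacent $W_1$ or $W_3$, I may assume $q_1,q_2$ are single $Q$-letters. Setting $q_*:=q_1\overline{W_2}q_2\in Q$, define
\[
W'':=\begin{cases} W_1\cdot q_*\cdot W_3 & \text{if } q_*\ne 1,\\ W_1W_3 & \text{if } q_*=1;\end{cases}
\]
then $\gamma(W'')=\gamma(W)=1$ and $\|W''\|<\|W\|$, so the inductive hypothesis is available for $W''$.

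Working inside $F_Q$, the key observation is that $q_1$, $q_2$ and $\overline{W_2}$ all lie in the single factor $Q$ of the free product, so $q_2q_*^{-1}=\overline{W_2}^{-1}q_1^{-1}$ in $Q$. A direct computation then gives
\[
W(W'')^{-1}=(W_1q_1)\,V\,(W_1q_1)^{-1}\quad\text{in }F_Q,
\]
where $V:=W_2\overline{W_2}^{-1}$; that is, $W(W'')^{-1}$ is conjugate to $V$ by the word $(W_1q_1)^{-1}$ over $\mathcal Q$. Since $\gamma(V)=\overline{W_2}\,\overline{W_2}^{-1}=1$ and $\|V\|\le\|W_2\|+1<\|W\|$, the inductive hypothesis also applies to $V$.

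Parts (1) and (2) of Lemma~\ref{Osi_lem3.3} now yield
\[
\areaq(W)\le\areaq(W'')+\areaq(W(W'')^{-1})=\areaq(W'')+\areaq(V)\le\kappa(\|W''\|+\|V\|)\le\kappa\|W\|,
\]
where $\|W''\|+\|V\|\le\|W\|$ (with equality when $q_*\ne 1$ and $\overline{W_2}\ne 1$). This closes the induction and yields linearity of the relative Dehn function. The only delicate step is the $F_Q$-identity $q_2q_*^{-1}=\overline{W_2}^{-1}q_1^{-1}$ inside the $Q$-factor, which lets one rewrite $W(W'')^{-1}$ as a conjugate of the strictly shorter word $V$; everything else is routine bookkeeping via Lemma~\ref{Osi_lem3.3}.
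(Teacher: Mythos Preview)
Your argument is correct and is precisely the induction on $\|W\|$ that Osin carries out in \cite[Lemma 3.5]{Osi06a}, which is what the paper invokes: shorten a non-primitive $W=W_1q_1W_2q_2W_3$ by collapsing $q_1W_2q_2$ to the single $Q$-letter $q_*$, and absorb the discrepancy as a conjugate of $W_2\overline{W_2}^{-1}$ using Lemma~\ref{Osi_lem3.3}(1)(2). The length bookkeeping $\|W''\|+\|V\|\le\|W\|$ and the reductions to single-letter $q_i$ are exactly as in Osin's proof, so there is nothing to add.
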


\begin{proof}[Proof of Theorem \ref{blowup}]
We prove that Condition (i) implies Condition (iii), Condition (iii) implies Condition (i), Condition (i) implies Condition (ii), and Condition (i) follows from Condition (ii). 

(i) $\Rightarrow$ (iii)
By Proposition \ref{blowup-a}, for any $\lambda\in \Lambda$, the group $H_\lambda$ is hyperbolic relative to $\{K_{\lambda,\mu}\}_\mu$. 
By Lemma \ref{a1}, there is a finite set $\Lambda_1$ satisfying the assertion. 

(iii) $\Rightarrow$ (i)
Let $X$ be a finite relative generating set of $G$ with respect to $\{H_\lambda\}$. 
For each $\lambda\in\Lambda$, let us denote by $Y_\lambda$ a finite relative generating set of $H_\lambda$ with respect to $\{K_{\lambda,\mu}\}_\mu$. 
If $\lambda\in \Lambda \setminus \Lambda_1$, we put $Y_\lambda=\emptyset$. 
By \cite[Proposition 4.28 (a)]{D-G-O12}, the family $\{H_\lambda\}$ is hyperbolically embedded in $G$ with respect to $X$ in the sense of \cite[Definition 4.25]{D-G-O12}, and for each $\lambda\in\Lambda$, $\{K_{\lambda,\mu}\}_\mu$ is hyperbolically embedded in $H_\lambda$ with respect to $Y_\lambda$ in the sense of \cite[Definition 4.25]{D-G-O12}. 
By a similar argument to the one in the proof of \cite[Proposition 4.35]{D-G-O12}, it is shown that the family $\{K_{\lambda,\mu}\}$ is hyperbolically embedded in $G$ with respect to $X\sqcup \left(\bigsqcup_{\lambda\in\Lambda}Y_\lambda\right)$ in the sense of \cite[Definition 4.25]{D-G-O12}. 
For any $\lambda \in \Lambda$, since the group $H_\lambda$ satisfies Condition ($a$) with respect to $(Y_\lambda, \{K_{\lambda,\mu}\}_\mu)$, there is a finite subset $L_\lambda$ of $M_\lambda$ satisfying the following: 
Whenever any locally minimal cycle without backtracking in $\Gamma(H_\lambda,Y_\lambda \sqcup \calk_\lambda)$ contains a $K_{\lambda,\mu}$-component, the index $(\lambda,\mu)$ is contained in $\{\lambda\} \times L_\lambda$.

It follows that whenever any locally minimal cycle without backtracking in $\Gamma(G, X \sqcup (\bigsqcup_{\lambda \in \Lambda} Y_\lambda) \sqcup \calk)$ contains a $K_{\lambda,\mu}$-component, the index $(\lambda,\mu)$ is contained in $\bigsqcup_{\lambda \in \Lambda_X \cup \Lambda_1} \{\lambda\} \times L_\lambda$.
The group $G$ thus satisfies Condition ($a$) with respect to $(X \sqcup (\bigsqcup_{\lambda \in \Lambda} Y_\lambda), \{K_{\lambda,\mu}\})$. 
Since $X\sqcup \left(\bigsqcup_{\lambda\in\Lambda}Y_\lambda\right)$ is finite and $G$ satisfies Condition ($a$) with respect to $(X\sqcup \left(\bigsqcup_{\lambda\in\Lambda}Y_\lambda\right),\{K_{\lambda,\mu}\})$, it follows from \cite[Theorem 3.1 (iv) and (ii)]{M-O-Y1} that $G$ is hyperbolic relative to $\{K_{\lambda,\mu}\}$. 

(i) $\Rightarrow$ (ii)
Conditions (1) and (2) in (ii) are satisfied when $G$ is hyperbolic relative to $\{H_\lambda\}$. 
Condition (3) in (ii) follows from Proposition \ref{blowup-a}. 

(ii) $\Rightarrow$ (i)
Let $X$ be a finite relative generating set of $G$ with respect to $\{K_{\lambda,\mu}\}$. 
The set $X$ is then a relative generating set of $G$ with respect to $\{H_\lambda\}$. 

We take a finite subset $\Lambda_1$ of $\Lambda$ satisfying the assertion in Lemma \ref{a1}. 
For any $\lambda\in\Lambda\setminus \Lambda_1$, the equality $H_\lambda=\ast_{\mu \in M_\lambda}K_{\lambda,\mu}$ holds. 

Since for any $\lambda\in \Lambda$, the group $H_\lambda$ is quasiconvex relative to $\{K_{\lambda,\mu}\}$ in $G$, the group $H_\lambda$ is undistorted relative to $\{K_{\lambda,\mu}\}$ in $G$ by \cite[Theorem 1.4 (i)]{M-O-Y1}. 
For each $\lambda\in\Lambda_1$, we hence denote by $Y_\lambda$ a finite relative generating set of $H_\lambda$ with respect to $\{K_{\lambda,\mu}\}_\mu$. 
For any $\lambda\in \Lambda\setminus\Lambda_1$, set $Y_\lambda=\emptyset$. 
For any $\lambda\in\Lambda$, the Cayley graph $\Gamma(H_\lambda, Y_\lambda\sqcup \calk_\lambda)$ is quasi-isometrically embedded into $\Gamma(G, X\sqcup \calk)$ by the definition of undistortion \cite[Definition 4.13]{M-O-Y1}. 

We put $M_0=\bigsqcup_{\lambda\in\Lambda_1}(\{\lambda\} \times M_\lambda)$ and $G_1=\langle\, X\sqcup\left(\bigsqcup_{\lambda\in\Lambda_1}\calh_\lambda\right)\,\rangle$. 
The group $G_1$ is also generated by $X\sqcup(\bigsqcup_{\lambda\in\Lambda}Y_\lambda)$ and $\{K_{\lambda,\mu}\}_{(\lambda,\mu)\in M_0}$. 
We denote a relative presentation of $G_1$ with respect to $\{H_\lambda\}_{\lambda\in\Lambda_1}$ by
\begin{equation}\label{pre_G1}
\langle\,X, H_{\lambda}, \lambda \in \Lambda_{1} \mid R=1, R\in \calr_1\,\rangle. 
\end{equation}
We then obtain that $G=G_1\ast (\ast_{\lambda\in\Lambda\setminus\Lambda_1}H_\lambda)$ by the proof of Lemma \ref{a1}. 
The group $G$ thereby has a relative presentation with respect to $\{H_\lambda\}$ denoted by 
\begin{equation}\label{pre_G}
\langle\,X, H_{\lambda}, \lambda \in \Lambda \mid R=1,R\in\calr_1\,\rangle .
\end{equation}

We note that the relative Dehn function of $G_1$ with respect to (\ref{pre_G1}) is also the relative Dehn function of $G$ with respect to (\ref{pre_G}). 
When (\ref{pre_G1}) is a finite relative presentation of $G_1$ with respect to $\{H_\lambda\}_{\lambda\in\Lambda_1}$, the presentation (\ref{pre_G}) is also a finite relative presentation of $G$ with respect to $\{H_\lambda\}$. 
We thus prove that $G_1$ is hyperbolic relative to $\{H_\lambda\}_{\lambda\in\Lambda_1}$. 

Let us set $\Lambda_1=\{1,2,\ldots, n\}$ and prove that $G_1$ is hyperbolic relative to $\{H_{i}\}_{i=1}^{n}$ by induction of $k\in\{1,2,\ldots, n\}$. 
Since $G$ has a finite relative presentation with respect to $\{K_{\lambda,\mu}\}$, the group $G_1$ also has a finite relative presentation with respect to $\{K_{\lambda,\mu}\}_{(\lambda,\mu)\in M_0}$. 
We note that  a relative Dehn function of $G$ with respect to $\{K_{\lambda,\mu}\}$ is also a relative Dehn function of $G_1$ with respect to $\{K_{\lambda,\mu}\}_{(\lambda,\mu)\in M_0}$. 
The group $G_1$ is therefore hyperbolic relative to $\{K_{\lambda,\mu}\}_{(\lambda,\mu)\in M_0}$. 
We suppose that for some $k \in \{1,2,\ldots,n\}$, the group $G_1$ is hyperbolic relative to $\{H_i\}_{i=1}^k\sqcup \{K_{i,\mu}\}_{i\geq k+1}$, where $\{K_{i,\mu}\}_{i\geq k+1}=\bigsqcup_{i=k+1}^n\{K_{i,\mu}\}_{\mu\in M_i}$. 

Put $\calk_{M_0}=\bigsqcup_{\lambda\in\Lambda_1}\calk_\lambda$. 
The Cayley graph $\Gamma(G_1,X\sqcup \left(\bigsqcup_{\lambda\in\Lambda}Y_\lambda\right)\sqcup\calk_{M_0})$ is a subgraph of $\Gamma(G,X\sqcup\left(\bigsqcup_{\lambda\in\Lambda}Y_\lambda\right)\sqcup\calk)$. 
We note that for any $i = 1,2,\ldots, n$, the graph $\Gamma(H_i, Y_i\sqcup \calk_i)$ is quasi-isometrically embedded into $\Gamma(G,X\sqcup\left(\bigsqcup_{\lambda\in\Lambda}Y_\lambda\right)\sqcup\calk)$ and its image is contained in $\Gamma(G_1,X\sqcup \left(\bigsqcup_{\lambda\in\Lambda}Y_\lambda\right)\sqcup\calk_{M_0})$. 
The group $H_{k+1}$ is thus quasiconvex relative to $\{K_{\lambda,\mu}\}_{(\lambda,\mu)\in M_0}$ in $G_1$ by \cite[Theorem 1.4 (i)]{M-O-Y1}. 
By Theorem \ref{qc-updown}, the group $H_{k+1}$ is also quasiconvex relative to $ \{H_i\}_{i=1}^k\sqcup \{K_{i,\mu}\}_{i\geq k+1}$ in $G_1$. 
The graph $\Gamma(H_{k+1},Y_{k+1}\sqcup \calk_{k+1})$ is hence quasi-isometrically embedded into $\Gamma\left(G_1,X\sqcup\left(\bigsqcup_{i=1}^k \calh_i \right)\sqcup \left(\bigsqcup_{i=k+1}^{n} \calk_i \right)\right)$ by \cite[Theorem 1.4 (i)]{M-O-Y1}. 
It follows from Proposition \ref{rel_Osi_thm1.5} that $G_1$ is hyperbolic relative to $ \{H_i\}_{i=1}^{k+1}\sqcup \{K_{i,\mu}\}_{i\geq k+2}$. 
\end{proof}


\end{document}